\newenvironment{acks}{%
  \begin{abstract}
}{%
  \end{abstract}
}
\newtcolorbox{story}[1][]{
  width=\textwidth,
  fonttitle=\bfseries,
  breakable,
  fonttitle=\bfseries\color{Brown},
  colframe=Melon,
  colback=Melon!10
  #1}
\newtcolorbox[use counter=mynote]
  {mynote}[1][]
  {title=Note~\thetcbcounter,
   width=4cm,
   left=0pt,
   right=0pt,
   fonttitle=\bfseries,
   coltitle=black,
   colframe=ForestGreen!40,
   colback=ForestGreen!10,
   #1
}
\begin{document}
\title{Non-Affine Stein Manifolds and Normal Crossing Divisors}
\author{Randall R. Van Why}
\date{\today}

\maketitle

\begin{abstract}
We show that there are Stein manifolds that admit normal crossing divisor compactifications despite being neither affine nor quasi-projective. To achieve this, we study the contact boundaries of neighborhoods of symplectic normal crossing divisors via a contact-geometric analog of W. Neumann’s plumbing calculus. In particular, we give conditions under which the neighborhood is determined by the contact structure on its boundary.
\end{abstract}
\tableofcontents
\begin{acks}
This work was primarily supported by NSF Grant DMS-1502632. We thank the institutions of Northwestern University, Princeton University, and the Georgia Institute of Technology for the hospitality during the completion of this project.  Much thanks to Austin Christian, John Etnyre, David Gay, Jie Min, Emmy Murphy, Hadrian Quan, and Eric Zaslow for the numerous helpful conversations and input.
\end{acks}

\section{Motivation and summary of results}

We seek to study compactifications of a Stein surface $X$. By compactification, we mean a complex analytic 4-manifold $M$ admitting a complex analytic submanifold $D \subset M$ such that $M - D$ is biholomorphic to $X$. The consideration of these objects gives rise to some interesting motivating questions:
\begin{itemize}
    \item When is a Stein manifold $X$ an algebraic variety (affine or quasi-projective)?
    \item When does a Stein manifold admit an algebraic compactification?
    \item Which algebraic Stein manifolds admit non-algebraic compactifications?
    \item How are the compactifications of $X$ related to each other?
\end{itemize}
In this article, we consider these questions purely in the context of symplectic geometry where the rigidity of symplectic structures can lend a hand. Every Stein manifold $X$ admits a symplectic structure and the general symplectic-topological underpinnings of Stein manifolds were characterized by Eliashberg in \cite{Eliashberg:stein}.

\subsection{Compactifications and non-K\"{a}hler manifolds}

If $X$ is a complex affine variety, we may always \emph{projectivize} $X$ to obtain a (possibly singular) projective variety $\widetilde{M}$, its projective completion. By construction, the intersection of the projective completion with an affine chart in projective space is biholomorphic to our original affine variety $X$. The set $\widetilde{D} = \widetilde{M} - X$ is a subvariety of $\widetilde{M}$ called a \emph{compactifying divisor}. 

It is a classical result of Hironaka \cite{Hironaka:resolution2, Hironaka:resolution1} that the singularities of $\widetilde{M}$ can be completely resolved so as to produce a smooth projective variety $M$. This resolution process is carried out by blowing up points lying on the compactifying divisor and can be performed in such a way that the resulting manifold $M$ still contains a biholomorphic copy of $X$. The complement $D := M - X$ is a subvariety with particularly nice singularities: (simple) normal-crossing singularities. The main topological conclusion we can draw from this story is summarized by:

    \textbf{Fact:} Every affine variety $X$ admits a smooth (projective) compactification $M$ by a normal crossing divisor.

A similar picture emerges with quasi-projective varieties which may be presented as the complements of normal crossing divisors in projetive varieties. Today, a symplectic geometer would recognize a Stein manifold as an example of a Liouville manifold and, through that perspective, may exploit any number of different symplectic invariants. In \cite{McLean:growth}, via an approach first introduced by Seidel \cite{Seidel:symplectic}, McLean established an obstruction to a Stein manifold $X$ being symplectomorphic to an affine variety relying on the growth rate of symplectic homology (c.f. \cite{Seidel:symplectic}). Through use of this obstruction, McLean has been able to determine, for example, when a cotangent bundle may be symplectomorphic to an affine variety \cite[Corollary~1.3]{McLean:growth}. These growth rate techniques take a view toward the above compactification phenomenon. 

McLean and Seidel's techniques essentially determine whether this sort of compactification is even possible for a given Liouville manifold $X$. In particular, they obstruct $X$ from being symplectomorphic to an affine variety by showing it cannot be compactified by a normal crossing divisor. One would naively hope that the existence of such a compactification is sufficient for $X$ to be symplectomorphic to some affine (or at least quasi-projective) variety. Our results will demonstrate that this is not the case.

 \begin{proof}[\normalfont \textbf{Theorem ~\ref{thm:main_example}}]
\renewcommand{\qedsymbol}{}
\em
There exists a Stein manifold admitting a normal crossing divisor compactification that is not biholomorphic to any affine variety.
\end{proof}

To describe this Stein manifold more thoroughly, we recall a classic construction construction in sympelectic geometry. In \cite{thurston:example}, Thurston presented the first example of a closed non-K\"{a}hler symplectic manifold. Before the explication of this example, it was not known whether there were any significant differences between K\"{a}hler and symplectic manifolds. Thurston's example is 4-dimensional and was first studied in a separate context by Kodaira \cite{Kodaira:structure} who showed that it admits an integrable almost complex structure and is thus a complex manifold. Because of this, the example has been known in the literature as the \emph{Kodaira-Thurston manifold} which we will denote by $M_{KT}$. 

In Kodaira's original presentation, we have $M_{KT} := (Nil^3/H) \times S^1$ where $Nil^3$ is the Heisenberg group and $H \subset Nil^3$ is a lattice. We will adopt Thurston's presentation in \cite{thurston:example} as the product $M_{KT} := \MMM_\phi^3 \times S^1$ where 
$\MMM_{\phi}^3$ is the \emph{mapping torus} of a right handed Dehn twist $\phi \colon T^2 \to T^2$. The diffeomorphism type of $\MMM_{\phi}$ is independent of the curve about which we perform the Dehn twist. The manifold $M_{KT}$ cannot be K\"{a}hler because we have $b_1(M_{KT}) = 3$ \cite{thurston:example}. K\"{a}hler manifolds possess a natural Hodge structure and their first Betti number is always even \cite[Chapter~7]{GH:principles}.

In \cite{Gompf:construction}, Gompf showed that, for even dimensions $2n \ge 4$, any finitely presented group can be made the fundamental group of some closed non-K\"{a}hler symplectic $2n$-manifold. Gompf's construction presents an avenue for finding many non-K\"{a}hler examples using obstructions similar to Thurston's. With all of this in mind, it is clear that being K\"{a}hler imposes restrictions on the underlying symplectic geometry, at least on the level of fundamental groups and first homology.

\subsection{An interesting non-algebraic Stein manifold}
Using the results contained in this work, we will construct a non-algebraic Stein 4-manifold $X_{KT}$ given as the complement of a normal crossing divisor in the Kodaira-Thurston example. This will prove our original theorem.
\begin{thm}\label{thm:main_example}
  Let $M_{KT}$ be the Kodaira-Thurston manifold (see \cite{thurston:example}). Let $D_{KT} \subset M_{KT}$ denote the normal crossing divisor given by the union of a generic section with a fiber of $M_{KT}$, thought of as an elliptic fibration. Then the Stein manifold  $X_{KT} = M_{KT} - D_{KT}$ is not biholomorphic to any affine variety.
\end{thm}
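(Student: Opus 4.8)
The plan is to argue by contradiction, pitting the given non-algebraic compactification $M_{KT}$ against the hypothetical algebraic one and detecting the difference through the first Betti number. Suppose $X_{KT}$ were biholomorphic to an affine variety $V$. By the \textbf{Fact} quoted above, $V$ admits a smooth projective compactification $\overline{V}$ with $V \cong \overline{V} - \widetilde{D}$ for a (simple) normal crossing divisor $\widetilde{D}$. Composing with the biholomorphism $X_{KT} \cong V$, we would then have two normal crossing divisor compactifications of one and the same Stein manifold $X_{KT}$: the given one inside $M_{KT}$, and the projective one inside $\overline{V}$.

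First I would pass to the symplectic category. As a Stein (hence Liouville) manifold, $X_{KT}$ carries a well-defined contact structure at infinity $(Y,\xi)$, independent up to contactomorphism of the auxiliary plurisubharmonic exhaustion. In each compactification the divisor neighborhood $N(D_{KT}) \subset M_{KT}$, respectively $N(\widetilde{D}) \subset \overline{V}$, is a symplectic normal crossing divisor neighborhood whose concave boundary is exactly $(Y,\xi)$, since removing it recovers the Liouville completion of $X_{KT}$. I would then invoke the recognition theorem established earlier in this work, which gives conditions under which such a neighborhood is determined up to symplectomorphism by the contact structure on its boundary. Checking these conditions for $D_{KT}$ — via the contact plumbing calculus, using that $D_{KT}$ is the transverse union of a fiber and a generic section of the elliptic fibration — would yield that $N(D_{KT})$ and $N(\widetilde{D})$ are symplectomorphic. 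Gluing the common Liouville piece $X_{KT}$ to symplectomorphic caps along the shared contact boundary then produces a diffeomorphism (indeed a symplectomorphism) $M_{KT} \cong \overline{V}$ of closed $4$-manifolds.

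The contradiction is now purely topological. The manifold $\overline{V}$ is smooth projective, hence K\"{a}hler, so by the Hodge decomposition its first Betti number is even (\cite[Chapter~7]{GH:principles}). On the other hand $b_1(M_{KT}) = 3$ is odd \cite{thurston:example}. No diffeomorphism $M_{KT} \cong \overline{V}$ can exist, so no such affine $V$ exists and $X_{KT}$ is not biholomorphic to any affine variety.

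I expect the main obstacle to be the second step: verifying that $D_{KT}$ satisfies the hypotheses of the recognition theorem and that the identification of the two contact boundaries is legitimate. Concretely, one must confirm that the two Liouville structures induced on $X_{KT}$ by the two compactifications are deformation equivalent, so that their ideal contact boundaries genuinely coincide, and that the plumbing graph of $D_{KT}$ meets the positivity and uniqueness criteria so that the cap is rigidly recovered from $(Y,\xi)$. Once the recognition theorem is applicable, the remainder is the soft comparison of Betti numbers above.
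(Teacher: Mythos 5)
Your overall architecture matches the paper's: play the hypothetical projective compactification $\overline{V}$ off against $M_{KT}$, identify the two divisor caps through the shared ideal contact boundary via the recognition theorem, and derive the contradiction from $b_1(M_{KT})=3$ versus evenness of $b_1$ for K\"{a}hler manifolds. (Note that \Cref{thm:unique_nbhd} only yields the caps \emph{diffeomorphic up to blow-ups and blow-downs}, not symplectomorphic as you assert; this overstatement is harmless here because blow-ups preserve $b_1$, and the paper's own proof runs the Betti-number comparison through a blow-up $\widehat{M}$ for exactly this reason.) However, there are two genuine gaps. First, \Cref{thm:unique_nbhd} requires \emph{both} divisors to be unobstructed, and you propose to check the hypotheses only ``for $D_{KT}$.'' The second divisor $\widetilde{D}$ is completely unknown --- it is whatever normal crossing divisor the hypothetical projective compactification happens to carry --- so the verification cannot proceed by inspecting one plumbing graph; it must be a statement about \emph{every} concave $SNC^+$ compactifying divisor of $X_{KT}$, which is the paper's \Cref{prop:unobstructed}. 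The paper achieves this by working on the boundary side: $Y_D$ is prime (\Cref{lem:prime}), contains no embedded Klein bottle (\Cref{lem:KT_1}), and --- the subtlest point --- no compactifying divisor of $X_{KT}$ can contain a sphere of square zero (\Cref{lem:KT_3}), ruled out via McDuff's classification of ruled symplectic manifolds together with the existence of a class of positive square in $H_2(X_{KT})$ (\Cref{lem:positive}). Without an argument of this kind, obstructedness of $\widetilde{D}$ (leading $0$-curves or Klein bottle pieces in its contact reduction) is not excluded and the recognition theorem simply does not apply.

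Second, even granting diffeomorphic caps, your claim that ``gluing the common Liouville piece $X_{KT}$ to symplectomorphic caps along the shared contact boundary then produces a diffeomorphism $M_{KT} \cong \overline{V}$'' fails as stated: $X \cup_{\Phi} N$ and $X \cup_{\Phi'} N$ need not be diffeomorphic unless $\Phi' \circ \Phi^{-1}$ extends to a diffeomorphism of $N$ (or of $X$). This is precisely the mapping-class subtlety flagged in \Cref{thm:main}, and the paper closes it with \Cref{prop:extends}: every diffeomorphism of $Y_D$ extends over $N_D$, proved by exploiting the JSJ decomposition of $Y_D$ along the plumbing torus and the induced Seifert structures on the two complementary pieces. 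Your closing paragraph worries about the deformation class of the two Liouville structures on $X_{KT}$ --- a legitimate but comparatively minor point --- while missing this extension problem entirely. So the skeleton and the final Betti-number contradiction are right, but the two load-bearing steps --- unobstructedness of \emph{all} compactifying divisors, and extension of the gluing diffeomorphism over the cap --- are each substantive arguments in the paper that your proposal leaves unproven.
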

If $X_{KT}$ were indeed biholomorphic to an affine variety $X$, then $X$ would admit a compact subdomain $\overline{X}\subset X$ whose interior is symplectomorphic to $X_{KT}$. We are able to eliminate the possibility of the existence of such an $X$ by studying the space of all \emph{symplectic} normal crossing (or $SNC^+$) divisor compactifications of the Liouville domain $\overline{X}$ and relating them. We do this by considering gluings $M = \overline{X} \cup_{\partial \overline{X}} N_D$ where $(N_D,\omega)$ is a regular symplectic neighborhood of a normal crossing divisor $D$. The Stein structure of $X$ endows $\overline{X}$ with the structure of a Liouville domain, a compact exact symplectic manifold with boundary. This structure furnishes $\partial \overline{X}$ with a natural contact structure. Because of this, we can phrase this as the study of $SNC^+$ divisor compactifications of the Liouville domain $\overline{X}$ obtained via a gluing along its contact boundary. We will adopt this context for the rest of our discussion.

\subsection{Symplectic and contact topological results}
Each $SNC^+$ divisor $D$ has an associated decorated graph $\Gamma_D$ which encodes topological information about the divisor. We call $\Gamma_D$ the \emph{divisor graph} of $D$. The main question we are interested in is:

\textbf{Question:} How much about the isomorphism type of $\Gamma_D$ can we learn from the contactomorphism type of $(\partial \overline{X}, \xi)$?

We can see easily that answering this question makes steps toward understanding all $SNC^+$ compactifications of $\overline{X}$. Any such compactification $(M,D)$ has an implicit contactomorphism $\Psi \colon (\partial \overline{X},\xi_D) \to (\partial N_D, \xi_D)$ which defines the gluing between $\overline{X}$ and $N_D$. If we were able to discern information about $\Gamma_D$ purely from this gluing map, we may be able to restrict the class of $SNC^+$ divisors $D$ that may compactify $X$. In other words, we would understand something about the structure of the collection of all compactifications of $X$.

If one neglects symplectic, contact, and algebraic structures entirely and focuses only on the topological aspects of the above discussion, this perspective is well understood. The 3-manifolds $Y_D := \partial N_D$, henceforth known as \emph{divisor boundaries}, are examples of \emph{3-manifold plumbings} which are classical objects in low dimensional topology. The graphs 
$\Gamma_D$ are examples of \emph{plumbing graphs} which give a combinatorial description of how to perform a 3-manifold plumbing and construct $Y_D$ topologically. In our discussion, we shall use ``divisor graphs" and ``plumbing graphs" essentially interchangeably with a preference toward ``divisor graph." We will distinguish the two wherever necessary.

In \cite{Neumann:calculus}, it was shown that divisor graphs can be ``reduced" to simpler divisor graphs via a well-defind procedure without changing the diffeomorphism type of its associated divisor boundary. This reduction procedure always terminates and we are left with a graph $\Gamma_D^{Top}$ which we will call the \emph{topological reduction} of $\Gamma_D$. Neumann proved that, for most 3-manifold plumbings, the diffeomorphism type of the plumbing characterizes the isomorphism type of $\Gamma_D^{Top}$. Rephrased in terms of divisor boundaries, this theorem reads: 
\begin{thm}[c.f. \cite{Neumann:calculus}, Theorem 4.2]
    Let $Y_D$ and $Y_{\widetilde{D}}$ divisor boundaries with divisor graphs $\Gamma_{D}$ and $\Gamma_{\widetilde{D}}$. Then $Y_D$ and $Y_{\widetilde{D}}$ are diffeomorphic if and only if $\Gamma_D^{Top} \approx \Gamma_{\widetilde{D}}^{Top}$.
\end{thm}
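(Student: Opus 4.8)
The plan is to prove the two implications separately; the reverse implication (isomorphic normal forms give diffeomorphic boundaries) is essentially formal, while the forward implication carries all of the content. For the reverse direction, I would first check that each of Neumann's reduction moves leaves the diffeomorphism type of the associated plumbed $3$-manifold unchanged, which is the defining feature of the calculus. Since $\Gamma_D$ and $\Gamma_{\widetilde D}$ reduce to $\Gamma_D^{Top}$ and $\Gamma_{\widetilde D}^{Top}$ through finite sequences of such moves, we obtain diffeomorphisms $Y_D \cong Y_{\Gamma_D^{Top}}$ and $Y_{\widetilde D} \cong Y_{\Gamma_{\widetilde D}^{Top}}$. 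An isomorphism $\Gamma_D^{Top}\approx\Gamma_{\widetilde D}^{Top}$ induces a diffeomorphism of the corresponding plumbings, and composing yields $Y_D\cong Y_{\widetilde D}$.

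The forward implication is the substantial part, and here my strategy is to reinterpret the plumbing as a \emph{graph manifold}. Each vertex $v$ of $\Gamma_D$ contributes a Seifert fibered piece---a circle bundle over the surface of genus $g_v$ with one boundary torus per incident edge, with Euler data dictated by the self-intersection decoration---and each edge glues two such pieces along a boundary torus, interchanging the two Seifert fibers. First I would argue that, when $\Gamma_D^{Top}$ is in normal form, this plumbing decomposition coincides with the canonical torus (JSJ) decomposition of $Y_D$: the reduction moves are designed precisely to remove the redundancies---blow-downs of $(\pm1)$-spheres, amalgamation of Seifert pieces that share a compatible fibration, and splitting off of trivial pieces---that would otherwise keep the decomposition from being minimal.

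With the normal-form plumbing identified with the canonical decomposition, I would then invoke the uniqueness of the JSJ decomposition of an irreducible $3$-manifold, equivalently Waldhausen's classification of graph manifolds: the decomposing family of incompressible tori is unique up to isotopy, and the Seifert structure on each piece is unique up to the standard ambiguities. A diffeomorphism $Y_D\cong Y_{\widetilde D}$ then restricts to a bijection between the Seifert pieces and between the gluing tori that respects the fiber data, and this is exactly the combinatorial information encoded by the decorated graph; reading it off recovers the isomorphism $\Gamma_D^{Top}\approx\Gamma_{\widetilde D}^{Top}$.

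The hard part will be the exceptional cases in which the dictionary between normal forms and canonical decompositions degenerates. These are exactly the small Seifert fibered spaces and the lens-space or reducible plumbings---manifolds that either carry several inequivalent Seifert structures or, like $S^3$, $S^1\times S^2$, and lens spaces, contain too few incompressible tori to pin down a decomposition---for which a single $3$-manifold can arise from non-isomorphic normal forms. To obtain the clean ``if and only if'' I would isolate this finite list explicitly and verify that the decorations forced on a divisor boundary by the $SNC^+$ condition---in particular positivity of the relevant Euler/self-intersection data together with enough vertices or genus---exclude every manifold on it, so that the canonical-decomposition argument applies verbatim and the equivalence holds on the nose.
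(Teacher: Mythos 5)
Your overall architecture is essentially the right one, and in fact it is Neumann's: the paper itself offers no proof of this statement but cites \cite{Neumann:calculus}, where the easy direction is exactly your observation that each calculus move preserves the diffeomorphism type of the plumbed boundary, and the hard direction is proved by relating normal-form graphs to the decomposition of the graph manifold along essential tori and invoking Waldhausen's classification together with uniqueness of Seifert fibrations. So up to that point your proposal and the cited proof travel the same road.

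However, your plan for the exceptional cases contains a genuine gap, and it is where all the remaining content lives. You propose to dispose of the degenerate manifolds (lens spaces, $S^3$, $S^1\times S^2$, small Seifert fibered spaces, the Klein-bottle ambiguity) by showing that the decorations forced by the $SNC^+$ condition exclude them from ever arising as divisor boundaries. This is false: by the paper's own \Cref{lem:lens}, chains of spheres have lens-space boundaries; a single $0$-weighted sphere gives $S^1\times S^2$; star-shaped divisor graphs give small Seifert fibered spaces; and the configurations with a vertex joined to two $\pm 2$-vertices (the Klein bottle pieces of \Cref{sec:klein}) realize precisely the twisted $I$-bundle over the Klein bottle, whose two inequivalent Seifert fibrations are the source of the $\RP^2$-absorption move. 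None of these can be excluded, so the equivalence cannot be made to hold ``on the nose'' by restricting the class of manifolds. Neumann's resolution is the opposite: the ambiguities are built into the normal form itself -- chains with all weights $\le -2$ are unique by uniqueness of continued-fraction expansions with entries $\ge 2$, and the special components allowed in conditions (1) and (3) of the normal form absorb the double-fibration coincidences. Relatedly, your identification of the normal-form plumbing decomposition with the JSJ decomposition is inaccurate as stated: in normal form the chains survive (they are not amalgamated away), and the plumbing tori along a chain are \emph{not} JSJ tori -- a node together with its attached chains constitutes a single Seifert piece in which each chain contributes one exceptional fiber, and only node-to-node edges yield essential tori. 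Without this correction, the step ``read the decorated graph off the canonical decomposition'' does not recover the chain weights; one must pass through the normalized continued-fraction presentation of the Seifert invariants, which is exactly what Neumann's normal form encodes.
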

Throughout, we shall refer to this result as ``Neumann's theorem." While this result does answer the question topologically, it is less useful for our purposes since Neumann's reduction procedure has no regard (and actually may destroy) the contact topology at hand. To deal with this, we develop a new reduction procedure which terminates and produces a (possibly distinct) graph $\Gamma_D^{\xi}$, the \emph{contact reduction} of $\Gamma_D$. In the case when the divisor boundary associated to $\Gamma_D$ is a \emph{prime} 3-manifold (c.f. \cite{Hatcher:3manifolds}, Chapter 1), by comparing $\Gamma_D^\xi$ and $\Gamma_D^{Top}$, we can arrive at some topological conclusions:
\begin{thm}
Let $N_{D}$ be a concave divisor neighborhood (\Cref{sec:concavity}). Then if $\partial N_D$ is a prime 3-manifold and $\Gamma^{\xi}_D \not \approx \Gamma^{Top}_D$, one or more of the following are true:
\begin{enumerate}[label=(\roman*)]\label{thm:divisor_boundary}
    \item We may blow up $N_D$ to a concave $SNC^+$ divisor neighborhood $N_{\widetilde{D}}$ which contains a smooth component $S$ diffeomorphic to a sphere with $S\cdot S = 0$,
    \item $(\partial N_D, \xi_{D})$ is Seifert-fibered over $\RP^2$ via a fibration with at most one singular fiber, or
    \item $(\partial N_D, \xi_{D})$ contains an embedded incompressible Klein bottle.
\end{enumerate}
\end{thm}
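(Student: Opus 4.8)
The plan is to let the two reductions compete and use Neumann's theorem as the referee. By construction both $\Gamma_D^{Top}$ and $\Gamma_D^{\xi}$ are reductions of $\Gamma_D$, and each yields a divisor boundary diffeomorphic to $Y_D = \partial N_D$: the topological reduction preserves the diffeomorphism type by Neumann's calculus, while the contact reduction preserves the entire contact manifold $(\partial N_D,\xi_D)$ and in particular its underlying $3$-manifold. First I would take the topological reduction of the contact-reduced graph, $(\Gamma_D^{\xi})^{Top}$, and compare it with $\Gamma_D^{Top}$. Since both are topological normal forms presenting the same oriented $3$-manifold $Y_D$, Neumann's theorem forces $(\Gamma_D^{\xi})^{Top} \approx \Gamma_D^{Top}$ unless $Y_D$ belongs to Neumann's short list of exceptional plumbed manifolds.

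The argument then branches. If $(\Gamma_D^{\xi})^{Top} \not\approx \Gamma_D^{Top}$, then $Y_D$ must itself be one of Neumann's exceptional manifolds; restricting that list to prime $Y_D$ and setting aside the ambiguities that are realizable by contact moves (treated below), the surviving possibilities are a square-zero sphere neighborhood, a Seifert fibration over $\RP^2$ with at most one singular fiber, and a manifold containing an incompressible Klein bottle, which are exactly (i), (ii), (iii). If instead $(\Gamma_D^{\xi})^{Top} \approx \Gamma_D^{Top}$ while $\Gamma_D^{\xi} \not\approx \Gamma_D^{Top}$, then $\Gamma_D^{\xi}$ is contact-reduced but \emph{not} in topological normal form, so some Neumann move applies to it that is not a legal contact move. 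The core of the proof is to enumerate these contact-blocked moves. Going through Neumann's moves on a concave $SNC^+$ neighborhood, I expect the $(-1)$-blow-downs to be symplectically realizable, hence never blocked, so that the only obstructions are the $0$-chain absorptions and the Seifert-type switching moves. A blocked $0$-chain absorption should exhibit, after a controlled symplectic blow-up of $N_D$, a smooth square-zero sphere component, giving (i); the blocked switching moves are the combinatorial signature of the non-uniqueness of plumbing presentations of Seifert manifolds, and translating them through the geometric decomposition of $Y_D$ while using primeness to discard the reducible cases yields (ii) or (iii).

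The hardest part will be the combinatorics-to-geometry dictionary underlying the second branch: I must verify that each contact-blocked Neumann move matches exactly one of the three listed features, and in particular that a blocked $0$-chain absorption produces a genuine sphere of square $0$ (rather than a higher-genus component or a sphere with nontrivial normal data) only after a blow-up I can control. A related subtlety is showing that the \emph{benign} Neumann exceptions, such as the lens-space continued-fraction ambiguities, are realizable by contact moves and therefore never cause the discrepancy $\Gamma_D^{\xi}\not\approx\Gamma_D^{Top}$; this is precisely what pares the exceptional list down to (i)--(iii). Finally, because Neumann's calculus is sensitive to the signs of the Euler numbers, I would need to track the orientation conventions imposed by concavity (\Cref{sec:concavity}) at every step, confirming that both reductions respect the concavity of $N_D$ so that the gluing identification of $\partial N_D$ remains valid throughout.
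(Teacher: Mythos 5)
Your overall strategy---attribute the discrepancy $\Gamma_D^{\xi} \not\approx \Gamma_D^{Top}$ to Neumann moves that are blocked in the contact category, then match each blocked configuration to one of (i)--(iii)---has the right shape and is close in spirit to the paper's argument. But the step you yourself flag as the core, the enumeration of contact-blocked moves, is wrong in a way that breaks both of your branches. The principal non-contact move is the $+1$ blow-up/blow-down: the paper recalls (\cite{LM:capping}, Example 2.21) that it can convert $(S^3,\xi_{std})$ into an overtwisted contact manifold, and the modified chain reduction lemma (\Cref{lem:modified_reduction}) exists precisely to normalize chains without it. Its unavailability is what leaves chains with leading $0$-curves in the contact normal form, and those $0$-spheres are exactly the source of conclusion (i): since the $-1$ blow-ups/blow-downs used in the contact reduction are contact-preserving on concave neighborhoods by \cite{LM:local}, the reduction is realized by symplectic blow-ups on $N_D$ itself, producing the neighborhood $N_{\widetilde{D}}$ with a square-zero sphere component. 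Your proposal instead blames ``$0$-chain absorptions,'' but $0$-curve absorption already presupposes the square-zero sphere whose existence is the thing to be produced. Worse, the ``Seifert-type switching moves'' you invoke for (ii)--(iii) are vacuous here: Neumann's Seifert graph exchanges apply only to graphs with loops, and $SNC^+$ divisor graphs are loop-free by simplicity, so the paper discards them outright. The actual source of (ii)--(iii) is the blocked $\RP^2$-absorption, which leaves Klein bottle pieces in $\Gamma_D^{\xi}$; one then splits on virtual compressibility: if the Klein bottle is virtually compressible, primeness forces $Y_D$ to be Seifert-fibered over $\RP^2$ with at most one singular fiber (\Cref{prop:compressible_klein1}), and otherwise it is incompressible (\Cref{cor:essential}). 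You never mention $\RP^2$-absorption or Klein bottles, so your second branch has no route to (ii) or (iii).

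A secondary problem is your first branch. As the paper states Neumann's uniqueness theorem (\Cref{thm:main_neumann}), two normal-form graphs with diffeomorphic plumbed boundaries are isomorphic---there is no ``short list of exceptional plumbed manifolds'' to restrict to primes, so the case $(\Gamma_D^{\xi})^{Top} \not\approx \Gamma_D^{Top}$ never arises and your dichotomy collapses into the second branch. The paper's actual proof is substantially more direct than your plan: by the definition of contact normal form, $\Gamma_D^{\xi}$ differs from a TPC normal form in exactly two possible features (leading $0$-curves on chains, or Klein bottle pieces), so $\Gamma_D^{\xi} \not\approx \Gamma_D^{Top}$ immediately forces at least one of the two cases, each of which is dispatched as above; no comparison of iterated reductions is needed.
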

This structural theorem is implied by our main results for divisor boundaries (which are implicit in this result if one restricts to statements only about $\partial N_D$). The details of this will be discussed in \Cref{sec:birational}.

\begin{defn}
    We say a divisor boundary $(Y_{D}, \xi_{D})$ is \emph{obstructed} if $\Gamma^{\xi} \not \approx \Gamma^{Top}$ and  \emph{unobstructed} otherwise. 
\end{defn}

In the unobstructed case, the following result follows directly from \cite{Neumann:calculus}.
\begin{thm}\label{thm:unique_reduction}
Let $(Y_{D}, \xi_{D})$ and $(Y_{\widetilde{D}}, \xi_{\widetilde{D}})$ be a pair of unobstructed divisor boundaries with divisor graphs $\Gamma_D$ and $\Gamma_{\widetilde{D}}$. Suppose that $(Y_D,\xi_D)$ and $(Y_{\widetilde{D}},\xi_{\widetilde{D}})$ are contactomorphic. Then $\Gamma_D^{\xi} \approx \Gamma_{\widetilde{D}}^{\xi}$.
\end{thm}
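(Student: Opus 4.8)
The plan is to treat this statement as what the preceding remark already advertises it to be: an immediate consequence of Neumann's theorem combined with the definition of unobstructedness, with the contact structure entering the argument only through the orientation it induces. The whole proof reduces to a short chain of graph isomorphisms, so I would keep it lean rather than reprove anything.

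First I would record that a contactomorphism $\Phi\colon (Y_D,\xi_D) \to (Y_{\widetilde D}, \xi_{\widetilde D})$ is, after forgetting the contact structures, a diffeomorphism of the underlying oriented $3$-manifolds. The one point requiring genuine care is orientation. Since divisor boundaries carry coorientable positive contact structures coming from the ambient symplectic (Liouville) geometry, each $\xi_D = \ker\alpha_D$ induces a canonical orientation on $Y_D$ via $\alpha_D \wedge d\alpha_D$, and a coorientation-preserving contactomorphism carries this orientation to the corresponding one on $Y_{\widetilde D}$. Thus $\Phi$ furnishes an \emph{orientation-preserving} diffeomorphism $Y_D \cong Y_{\widetilde D}$, which is exactly the hypothesis demanded by Neumann's theorem.

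Next I would apply Neumann's theorem (the version stated above, c.f.\ \cite{Neumann:calculus}): because $Y_D$ and $Y_{\widetilde D}$ are diffeomorphic, their topological reductions agree, $\Gamma_D^{Top} \approx \Gamma_{\widetilde D}^{Top}$. Finally I would invoke unobstructedness, which by definition gives $\Gamma_D^{\xi} \approx \Gamma_D^{Top}$ and $\Gamma_{\widetilde D}^{\xi} \approx \Gamma_{\widetilde D}^{Top}$. Concatenating these,
\[
\Gamma_D^{\xi} \;\approx\; \Gamma_D^{Top} \;\approx\; \Gamma_{\widetilde D}^{Top} \;\approx\; \Gamma_{\widetilde D}^{\xi},
\]
which is the desired conclusion.

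The main (and essentially only) obstacle lies not in the deduction but in ensuring the divisor boundaries fall within the regime where Neumann's characterization is a genuine biconditional. Neumann's result identifies the oriented diffeomorphism type with the reduced graph only away from a short list of exceptional plumbings (certain lens spaces and small Seifert-fibered manifolds). I would therefore need to confirm that the unobstructed divisor boundaries in question avoid these exceptional cases; alternatively, if the clean iff of the quoted ``Neumann's theorem'' is understood as already restricted to the admissible class of divisor boundaries, then one may cite it verbatim and the argument is entirely formal. Everything else is bookkeeping on orientations.
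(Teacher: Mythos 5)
Your argument is exactly the paper's: the paper states that \Cref{thm:unique_reduction} ``follows directly from \cite{Neumann:calculus},'' i.e.\ a contactomorphism gives a diffeomorphism of the divisor boundaries, Neumann's theorem then identifies the topological reductions, and unobstructedness ($\Gamma^{\xi} \approx \Gamma^{Top}$ on both sides) closes the chain $\Gamma_D^{\xi} \approx \Gamma_D^{Top} \approx \Gamma_{\widetilde{D}}^{Top} \approx \Gamma_{\widetilde{D}}^{\xi}$. Your extra care about orientations (via the coorientation and $\alpha \wedge d\alpha$) and your caveat about citing Neumann's theorem verbatim for the relevant class of plumbing boundaries are sound refinements of what the paper leaves implicit, not departures from its proof.
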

Thus \Cref{thm:divisor_boundary} and \Cref{thm:unique_reduction} give a complete topological characterization of the extent to which Neumann's theorem fails to hold in the contact category. The proof of \Cref{thm:main_example} essentially follows from a corollary of \Cref{thm:unique_reduction}. The unobstructed case allows for to draw a strong conclusion about divisor neighborhoods:
\begin{thm}\label{thm:unique_nbhd}
Let $(N_D,\omega)$ and $(N_{\widetilde{D}},\widetilde{\omega})$ denote concave divisor neighborhoods (\Cref{sec:concavity}) and suppose that 
$(\partial N_D, \xi_D)$ and $(\partial N_{\widetilde{D}}, \xi_{\widetilde{D}})$ are their associated divisor boundaries. Suppose that both $D$ and $\widetilde{D}$ are unobstructed. Then if $(\partial N_D,\xi_{D})$ is contactomorphic to $(\partial N_{\widetilde{D}},\xi_{\widetilde{D}})$, the symplectic manifolds $(N_D,\omega)$ 
and $(N_{\widetilde{D}},\widetilde{\omega})$ are diffeomorphic up to blow-ups and blow-downs.
\end{thm}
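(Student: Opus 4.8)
\emph{Proof proposal.} The plan is to convert the contact hypothesis into an isomorphism of topological reductions and then geometrize that isomorphism through the blow-up correspondence for divisor neighborhoods. First I would feed the hypotheses directly into \Cref{thm:unique_reduction}: since $(\partial N_D, \xi_D)$ and $(\partial N_{\widetilde{D}}, \xi_{\widetilde{D}})$ are contactomorphic and both $D$ and $\widetilde{D}$ are unobstructed, that theorem furnishes a graph isomorphism $\Gamma_D^{\xi} \approx \Gamma_{\widetilde{D}}^{\xi}$ of the two contact reductions. By the definition of unobstructedness we also have $\Gamma_D^{Top} \approx \Gamma_D^{\xi}$ and $\Gamma_{\widetilde{D}}^{\xi} \approx \Gamma_{\widetilde{D}}^{Top}$. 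Composing these three isomorphisms yields
\[
  \Gamma_D^{Top} \;\approx\; \Gamma_D^{\xi} \;\approx\; \Gamma_{\widetilde{D}}^{\xi} \;\approx\; \Gamma_{\widetilde{D}}^{Top},
\]
so $D$ and $\widetilde{D}$ have isomorphic topological reductions. This opening step is essentially formal.

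Next I would translate this combinatorial conclusion into geometry. The mechanism is that each blow-down appearing in the reduction of a divisor graph is the combinatorial shadow of a genuine blow-down of the corresponding divisor neighborhood: blowing up a smooth point of a component, or a node, of $D$ produces a divisor $\widetilde{D}$ whose regular neighborhood $N_{\widetilde{D}}$ is diffeomorphic to the blow-up of $N_D$ and realizes exactly the associated graph move, all while leaving $\partial N_D$ unchanged. Performing the reduction of $\Gamma_D$ to $\Gamma_D^{Top}$ therefore exhibits $N_D$ as obtained from a neighborhood $N_0$ of the common reduced graph by finitely many blow-ups, and likewise for $N_{\widetilde{D}}$. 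Since a concave divisor neighborhood is determined up to diffeomorphism by the isomorphism type of its divisor graph (the symplectic neighborhood theorem for $SNC^+$ divisors), the two reduced neighborhoods are diffeomorphic. Reading the reduction of $\Gamma_D$ as a chain of blow-downs and the reduction of $\Gamma_{\widetilde{D}}$ in reverse as a chain of blow-ups then presents $N_D$ and $N_{\widetilde{D}}$ as diffeomorphic up to blow-ups and blow-downs, which is the assertion.

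I expect the main obstacle to be verifying that the entire passage to topological normal form can be carried out using only moves that lift to neighborhood-level blow-downs. Neumann's calculus is engineered to preserve the diffeomorphism type of the boundary plumbing $Y_D$, and a priori it also contains normalization moves (absorbing $0$-chains, handling $S^1 \times S^2$ and Seifert summands, and the like) that need not be interpretable as blow-ups of the ambient four-manifold. I would address this by isolating the sub-calculus of blow-up/blow-down moves and arguing that, for concave divisor neighborhoods, the reduction producing $\Gamma^{Top}$ may be taken to consist solely of these liftable moves, with any genuinely non-liftable move circumvented at the cost of allowing blow-ups in both directions. Establishing this liftability, together with the neighborhood uniqueness statement that promotes an abstract graph isomorphism to a diffeomorphism of the reduced neighborhoods, is where the substance of the argument lies.
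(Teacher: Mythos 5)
Your proposal is correct and follows essentially the same route as the paper, which proves this theorem by direct application of Neumann's calculus once unobstructedness identifies the contact and topological reductions. The ``main obstacle'' you flag---that the reduction to normal form must use only moves liftable to neighborhood-level blow-ups and blow-downs---is exactly what the paper has already discharged in \Cref{sec:cpc}: the modified chain reduction lemma (\Cref{lem:modified_reduction}) shows $\Gamma^\xi$ is reached by $(-1)$-blow-ups/blow-downs alone (realized geometrically while preserving concavity via \cite{LM:local}), and the discussion in \Cref{sec:tpc_moves} rules out all other Neumann moves for $SNC^+$ divisor graphs.
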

The majority of the work in verifying that $X_{KT}$ is non-algebraic, that is proving \Cref{thm:main_example}, is reduced to verifying that the divisor boundary associated to $(M_{KT}, D_{KT})$ is unobstructed in the above sense. From there we can use \Cref{thm:unique_nbhd} to understand all concave $SNC^+$ divisor compactifications of $X_{KT}$ which will allow us to conclude \Cref{thm:main_example}. The details of this will be handled in the latter part of \Cref{sec:birational}.

To draw conclusions about compactifications, there is a subtlety involving mapping classes of diffeomorphisms used to glue on a divisor neighborhood. This is managed in the following result.
\begin{thm}\label{thm:main}
    Let $D$ be a concave compactifying $SNC^+$ divisor for a 4-dimensional Liouville domain $(X^4,\omega)$ and let $(M^4,\omega)$ be a compactification of $X$ by $D$ with associated mapping class $[\Psi] \in \pi_0(Cont(\partial X))$ defining the capping. If $D$ is unobstructed, any other $SNC^+$ divisor compactification $(\widetilde{M}, \widetilde{\omega})$ with the same mapping class $[\Psi]$ is diffeomorphic to $(M,\omega)$ up to blow-ups and blow-downs.
\end{thm}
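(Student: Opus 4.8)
The plan is to separate a compactification into three pieces --- the fixed Liouville domain $X$, the capping neighborhood $N_D$, and the contact gluing map $\Psi$ --- and to control the latter two independently, the neighborhood by \Cref{thm:unique_nbhd} and the gluing by the mapping-class hypothesis. I would begin by writing $M = X \cup_\Psi N_D$ and $\widetilde M = X \cup_{\widetilde\Psi} N_{\widetilde D}$, where $\Psi \colon (\partial X, \xi) \to (\partial N_D, \xi_D)$ and $\widetilde\Psi \colon (\partial X, \xi) \to (\partial N_{\widetilde D}, \xi_{\widetilde D})$ are the defining contactomorphisms. Since both are cappings of the \emph{same} $X$, the composite $\widetilde\Psi \circ \Psi^{-1}$ is a contactomorphism $(\partial N_D, \xi_D) \to (\partial N_{\widetilde D}, \xi_{\widetilde D})$, so the two divisor boundaries are contactomorphic. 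As obstructedness is a contactomorphism invariant of the divisor boundary --- the reduction $\Gamma^\xi$ being built from the contact structure and $\Gamma^{Top}$ from the smooth topology --- the hypothesis that $D$ is unobstructed forces $\widetilde D$ to be unobstructed as well. \Cref{thm:unique_nbhd} then applies and produces a diffeomorphism $N_D \cong N_{\widetilde D}$ after a finite sequence of blow-ups and blow-downs.

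Next I would reduce to a single capping piece. Every blow-up and blow-down realizing the diffeomorphism $N_D \cong N_{\widetilde D}$ is supported in the interior of the divisor neighborhood, away from the contact collar along which $X$ is attached; each therefore lifts to the corresponding blow-up or blow-down of the closed manifold $M$ (respectively $\widetilde M$) and commutes with the gluing. After performing these operations both compactifications are presented as gluings of the \emph{same} pair $X$ and $N$, along maps obtained from $\Psi$ and $\widetilde\Psi$ by postcomposition with the boundary restriction of the identifying diffeomorphism of $N$.

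It then remains to compare the two gluing maps. In general the diffeomorphism type of $X \cup_f N$ depends on $f$ only through its class in $\pi_0(Diff(\partial X))$ modulo precomposition by $Diff(X)|_{\partial}$ and postcomposition by $Diff(N)|_{\partial}$. The boundary restrictions of the identifying diffeomorphisms introduced in the previous step lie in $Diff(N)|_{\partial}$ and are hence invisible in this double-coset, so the gluings for $M$ and $\widetilde M$ reduce to those given by $\Psi$ and $\widetilde\Psi$ themselves. The hypothesis $[\Psi] = [\widetilde\Psi]$ in $\pi_0(Cont(\partial X))$ maps forward, under the natural map $\pi_0(Cont(\partial X)) \to \pi_0(Diff(\partial X))$, to equality of these two classes in the double-coset; gluing along maps equal in the double-coset yields diffeomorphic manifolds, so $M$ and $\widetilde M$ agree up to the blow-ups and blow-downs above.

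The hard part, and the reason the capping mapping class must be defined with care, is the compatibility underlying the final step: one must check that the identifications of $\partial N_D$ and $\partial N_{\widetilde D}$ with $\partial X$ implicit in the definition of $[\Psi]$ and $[\widetilde\Psi]$ are compatible with the merely smooth diffeomorphism furnished by \Cref{thm:unique_nbhd}, so that all residual indeterminacy indeed lands in the image of $\pi_0(Diff(N))$ already quotiented out. Equivalently, one must confirm that the capping mapping class in $\pi_0(Cont(\partial X))$ is precisely the obstruction to the two gluings agreeing smoothly, and in particular that it is contact isotopy --- rather than mere smooth isotopy --- that it records. Making this bookkeeping precise is the technical heart of the argument; the remaining steps are formal once \Cref{thm:unique_nbhd} is in hand.
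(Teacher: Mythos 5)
Your overall architecture coincides with the paper's: decompose each compactification as $\overline{X} \cup_{\Psi} N_D$, control the capping piece via \Cref{thm:unique_nbhd}, and control the gluing via the mapping class. The paper is in fact terser than you are here --- it simply asserts that the smooth isotopy class of the gluing contactomorphism determines the diffeomorphism type of the glued manifold and cites \Cref{thm:unique_nbhd} --- so your double-coset bookkeeping, and the observation that the blow-ups and blow-downs furnished by \Cref{thm:unique_nbhd} are supported in the interior of the cap and hence lift to the closed manifolds, is at least as careful as the source. Your closing deferral of the boundary-identification compatibility is also consistent with the paper's level of rigor (in the actual application this indeterminacy is killed by \Cref{prop:extends}, which shows every diffeomorphism of $Y_D$ extends over $N_D$).

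There is, however, one genuinely unsupported step: your claim that ``obstructedness is a contactomorphism invariant of the divisor boundary,'' used to transfer unobstructedness from $D$ to $\widetilde{D}$ so that \Cref{thm:unique_nbhd} applies. Nothing in the paper establishes this. The reductions $\Gamma^{\xi}$ and $\Gamma^{Top}$ are computed from the divisor graph $\Gamma_D$, which is data attached to the divisor, not to the contact boundary; the paper's definition phrases obstructedness as a property of $(Y_D,\xi_D)$, but never shows it is well defined at the boundary level, i.e.\ independent of which divisor realizes that boundary. \Cref{thm:unique_reduction} is of no help here: it compares the contact reductions of two divisors only \emph{after} both are assumed unobstructed, and gives no mechanism for ruling out that a second divisor with contactomorphic boundary is obstructed. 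The paper is visibly aware of this issue: the version of the theorem actually argued in \Cref{sec:birational} is a dichotomy --- either the obstructed-case alternatives hold (a blow-up of a ruled symplectic manifold, a Seifert fibration over $\RP^2$ with at most one singular fiber, or a virtually essential Klein bottle in the boundary, as in \Cref{thm:divisor_boundary}) or the blow-up/blow-down equivalence holds --- precisely because unobstructedness of $\widetilde{D}$ cannot be deduced from that of $D$. And in the $X_{KT}$ application the paper does not transport unobstructedness from $D_{KT}$; it proves directly that \emph{every} compactifying divisor is unobstructed and prime (\Cref{prop:unobstructed}, via primeness of $Y_D$, absence of Klein bottles, and absence of $0$-spheres in any compactifying divisor). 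To close your argument you must either prove the invariance claim --- which would be a new result, not a remark --- or add the hypothesis that $\widetilde{D}$ is unobstructed, or weaken your conclusion to the paper's dichotomy.
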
 

The outline of the paper is as follows:
We will start with some necessary preliminaries (\Cref{sec:prelim}) surrounding the symplectic and contact geometric theory of concave symplectic normal crossing (or $SNC^+$) divisors. We then turn to the classical theory of plumbings, which will make up the main theoretical context for proving our primary lemma on divisor graph reductions. We will outline a framework and process 
for normalizing divisor graphs while preserving the contact structure on the divisor boundary (\Cref{sec:cpc}). After showing that we can almost normalize any divisor graph, we will collect all the topological ramifications into the main results of the paper (\Cref{sec:birational}). We will then focus our attention to the particular case $(M_{KT}, D_{KT})$ above. We will use \Cref{thm:unique_nbhd} to draw conclusions about $X_{KT}$ and $D_{KT}$ that will allow us to conclude with \Cref{thm:main_example}. A short discussion on birational symplectic geometry will follow.

\section{Preliminaries}\label{sec:prelim}
This section collects short definitions and descriptions of the geometric objects relevant to the sections that follow. Everything will be phrased in low dimensions as we will only focus on 4-dimensional varieties and symplectic manifolds with 3-dimensional contact-type boundaries. As a result, the divisors we concern ourselves with are all symplectic surface configurations.

\subsection{Positive symplectic normal crossing divisors}
Let $(M,\omega)$ be a symplectic 4-manifold. An embedded closed 
 symplectic surface $\Sigma \subset M$ is called 
a \emph{symplectic divisor}. 

\begin{defn}
  A subspace $D \subset (M^4,\omega)$ is called a (simple) \emph{symplectic divisor with positive normal crossings} 
  (or simply an $SNC^+$-divisor) if we may write 
  \[D = \bigcup_{i=1}^N D_i\]
  where $\{D_i\}_{i=1}^N$ is a collection of embedded closed symplectic surfaces in $M$ such that any pair $(D_i,D_j)$ with  
  $D_i \cap D_j \neq \emptyset$ intersects $\omega$-orthogonally only in transverse double points (i.e. all intersections must be modeled after intersecting coordinate planes in $\CC^2$). The manifolds $D_i$ are called the \emph{smooth components} of the divisor
  $D$.
\end{defn}

To each $SNC^+$ divisor $D$, we may construct a finite
graph $\Gamma_D$ encoding the connectivity information of $D$ as follows: 
\begin{itemize}
  \item The vertex set of $\Gamma_D$ consists of one vertex $v_i$ for 
  every component $D_i$.
  \item Two distinct vertices $v_i,v_j$ are connected by an edge $e_{ij}^{k}$ for each $p_k \in D_i \cap D_j \neq \emptyset$.
\end{itemize}
It is important to note that, with this definition, there are no edges in $\Gamma_D$ from a vertex to itself (i.e. $\Gamma_D$ is free of loops). Thus $SNC^+$ divisors, as defined above, are topologically identical to simple normal crossing divisors in the complex-algebro-geometric setting.

\begin{rem}
  For notational convenience, we will re-label the components of $D$ by the corresponding vertices of $\Gamma_D$. Any vertex $v \in \Gamma_D$ 
    corresponds to a component $D_{v} \subset D$. We let $\mathcal{E}(v)$ denote the set of edges in $\Gamma_D$ adjacent to $v$ and we let 
    $\mathcal{N}(v)$ be the set of vertices neighboring $v$.
\end{rem}

Regular neighborhoods of symplectic divisors are modeled after plumbings of disk bundles over the components. We let $N_D$ be such smooth plumbing. The boundary $\partial N_D$ is called a \emph{divisor boundary} and is denoted by $Y_D := \partial N_D$ where $N_D$ is the neighborhood of $D$ discussed above. 

\subsection{Symplectic concavity near the boundary of a divisor}\label{sec:concavity}
In many cases, closed neighborhoods of $SNC^+$ divisors will admit natural contact structures on their boundaries. These contact structures are due to the symplectic geometry near the boundary of the neighborhood. We say that the symplectic manifold with boundary $N_D$ is \emph{concave} near $\partial N_D$ if it admits a collar neighborhood symplectomorphic to a portion of the negative symplectization of a contact manifold. In particular, this implies the presenece of a local Liouville vector field $V$ which is inwardly transverse to $\partial N_D$ and has the proprty that $L_V\omega = -\omega$. This means the flow contracts the symplectic area over time.

\begin{defn}
  An $SNC^+$ divisor $D$ is said to be \emph{concave} if every regular 
  neighborhood $N_D$ admits a closed sub-neighborhood $N_D$ such that $(N_D, \omega|_{N_D})$ is concave near $\partial N_D$.
\end{defn}

The Liouville vector field induces a contact structure $\xi_D$ on the boundary $Y_D := \partial N_D$. The contactomorphism type of $(Y_D,\xi_D)$ is stable under perturbation of $V$. 

There is a simple condition that determines when $N_D$ is concave. Every $SNC^+$ divisor, concave or otherwise, has an associated \emph{intersection form} $Q_{D} \colon \Gamma_D \times \Gamma_D \to \ZZ$ 
defined by
\[Q_D(v,w) = D_v \cdot D_w\]
From this definition, $Q_D(v,w) = d$ if $\Gamma_D$ has $d \ge 0$ edges from $v$ to $w$ and $Q_D(v,v)$ is the self-intersection number (in $M$) of $D_v$. 
Placing an arbitrary order on the vertices $v_1,\ldots, v_N$, we regard the intersection form as a linear map $Q_D \colon \RR^N \to \RR^N$ 
defined, as a matrix, by the entries 
\[(Q_D)_{ij} = Q_{D}(v_i,v_j).\]
We also have a well defined \emph{area vector} $\underline{a}\in \RR^N$ with entries 
\[a_{v_i} = \int_{D_{v_i}} \omega.\]
\begin{defn}
  We say that $D$ satisfies \emph{the positive GS-criterion} if there exists a vector $\underline{b} \in \RR^N$ with positive entries solving
    \[Q_D\underline{b} = \underline{a}.\]
\end{defn}

We then have:

\begin{thm}[\cite{LM:capping}, \textbf{Theorem 1.1}]
  If $D$ satisfies the positive GS-criterion, then 
  $D$ has a concave neighborhood $N_D$ inside any regular neighborhood $U$ via the GS-construction (\cite{LM:local}, Section 2.1). 
  The contactomorphism type of the boundary of $N_D$ is independent of all choices made during the GS-construction.
\end{thm}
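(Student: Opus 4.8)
The plan is to follow the Gay--Stipsicz construction (\cite{LM:local}, Section 2.1) and to split the statement into an existence part and an independence (well-definedness) part. For existence I would first invoke a symplectic neighborhood theorem to reduce to a standard model: since each $D_v$ is a closed symplectic surface and all crossings are $\omega$-orthogonal transverse double points, a regular neighborhood of $D$ is determined up to symplectomorphism by the areas $a_v = \int_{D_v}\omega$, the normal Euler numbers $Q_D(v,v)$, and the combinatorics recorded by $\Gamma_D$. It therefore suffices to produce the concave structure on the standard symplectic plumbing $N_D$ assembled from symplectic disk bundles over the $D_v$ glued at the double points, and then transport it into any given ambient regular neighborhood $U$ by shrinking.

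On this model I would construct an explicit primitive $\beta$ with $d\beta=\omega$ on a collar of $\partial N_D$. Over the piece lying above $D_v$ the symplectic form splits, in disk-bundle coordinates, as a base term $\pi^*\omega_v$ plus a standard fibrewise area form; a primitive exists on the complement of the zero section and contains a fibrewise angular term scaled by a parameter $b_v$ together with a transgression term over the base. Because $\omega_v$ is not exact on the closed surface $D_v$, these local primitives cannot be globalized independently: the mismatch of their periods around the fibre boundary circles is controlled by the area $a_v$ and by the self-intersection and intersection numbers recorded in $Q_D$. Matching all of these contributions simultaneously over every component is precisely the linear system $Q_D\underline{b}=\underline{a}$. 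Granting the positive GS-criterion, a solution $\underline{b}$ with $b_v>0$ exists, and the positivity of the $b_v$ is exactly what forces the associated field $V$ (determined from $\beta$ via $\omega$) to be inwardly transverse to $\partial N_D$ and contracting, $L_V\omega=-\omega$, as required by the definition of concavity; the restriction of $\beta$ to $\partial N_D$ (up to sign) is then a contact form for $\xi_D$.

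For the independence statement I would run a Moser/Gray-stability argument. The data entering the construction are the solution $\underline{b}$ of $Q_D\underline{b}=\underline{a}$ lying in the positive orthant, the chosen local symplectic models with their radial cutoff profiles, and the choice of ambient neighborhood $U$. I would show that the space of admissible data is path-connected: the affine solution set of $Q_D\underline{b}=\underline{a}$ intersected with the open positive cone is convex, while the auxiliary profile choices range over contractible spaces. Any two constructions are thus joined by a smooth one-parameter family of concave neighborhoods with boundaries $(\partial N_D^t,\xi_D^t)$, and Gray stability produces an isotopy realizing a contactomorphism between the endpoints, so the contact type of $(\partial N_D,\xi_D)$ is independent of all choices.

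The step I expect to be the main obstacle is the patching at the double points. Away from intersections the disk-bundle primitive is standard, but near a crossing the two fibre directions interact, and one must check that the angular and radial terms of the two meeting components can be interpolated without introducing zeros of $V$ or violating $d\beta=\omega$, and that the resulting period bookkeeping assembles \emph{exactly} into the matrix equation $Q_D\underline{b}=\underline{a}$ rather than some perturbation of it. A secondary subtlety is verifying that it is positivity of $\underline{b}$, and not mere solvability, that is equivalent to inward transversality of $V$; this is the point at which concavity, as opposed to convexity, of the neighborhood is pinned down.
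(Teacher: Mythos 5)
The paper does not prove this statement at all---it is imported verbatim from \cite{LM:capping} (Theorem 1.1), whose proof runs through the Gay--Stipsicz construction---and your sketch follows essentially that same route: fibrewise angular primitives scaled by the $b_v$ over a standard plumbing model, with the period bookkeeping over each closed component assembling into exactly the linear system $Q_D\underline{b}=\underline{a}$, positivity of $\underline{b}$ pinning down the inward-pointing Liouville field (and hence concavity rather than convexity), and independence of choices via convexity of the positive solution set together with contractibility of the auxiliary profile data and Gray stability. The double-point patching you flag as the main obstacle is precisely where the cited construction installs toric (moment-map) local models near each $\omega$-orthogonal crossing, interpolating the two angular terms without creating zeros of $V$; with that standard ingredient supplied, your outline is essentially the proof given in the cited source.
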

\begin{rem}``GS" refers to the authors of \cite{GS:surgeries} wherein a separate condition involving negative solutions is shown to be sufficient to ensure the existence of a \emph{convex} neighborhood. 
\end{rem}

Keeping track of the full symplectic topological data of concave neighborhoods requires decorations on the graph $\Gamma_D$ in the form of:
\begin{itemize}
    \item Functions $(g,k) \colon Vert(\Gamma_D) \to \NN_{\ge 0} \times \ZZ$ where 
    $g(v)$ is the genus and $k(v)$ is the self intersection number of the divisor $D_v$ and $Vert(\Gamma_D)$ is the set of vertices of $\Gamma_D$.
    \item The symplectic area vector $\underline{a}$.
\end{itemize}
For the remainder of our discussion, all divisors will assumed to be concave and so we will assume that their (decorated) graphs satisfy the positive GS-criterion.

\subsection{Divisor Boundaries }\label{sec:tpc}

The fact that closed regular neighborhoods of concave divisors admit contact type boundaries means that to every such divisor $D$, we may associate a contact 3-manifold $(Y_D, \xi_D)$. The contactomorphism type of $(Y_D,\xi_D)$ is stable up to perturbation of the local Liouville vector field. Any such 3-manifold $Y_D$ is called a \emph{divisor boundary} and the contact manifold $(Y_D,\xi_D)$ is called a \emph{contact divisor boundary}. As previously mentioned, divisor boundaries are examples of 3-manifold 
plumbings. These objects are classical to topology and have received extensive attention and use.

The diffeomorphism type of $Y_D$ is completely determined by the divisor $D$. This follows from classical results in smooth topology. We are interested in understanding to what extent the diffeomorphism type of $Y_D$ determines the contactomorphism type of $(Y_D,\xi_D)$.

\subsection{Topological Plumbing Calculus}
The diffeomorphism type of a divisor boundary (or even a more general plumbing) does not determine the isomorphism type of the divisor graph $\Gamma_D$. Performing a symplectic blow-up on $(N_D,d\lambda)$ at some point $p \in D$ away from the singular points will produce a new symplectic 4-manifold $(N_{\widetilde{D}}, \widetilde{\omega})$ which is a regular neighborhood of a divisor $\widetilde{D}$ which reduces to $D$ after blowing-down the exceptional curve. Blowing up is a local construction and does not affect the diffeomorphism type of the boundary $\partial N_{\widetilde{D}}$. We thus have:
\begin{prop}\label{prop:non_unique}
    If $Y_D$ and $Y_{\widetilde{D}}$ are divisor boundaries, then we may have $Y_D \approx_{\text{Diff}} Y_{\widetilde{D}}$ with $\Gamma_D \not \approx \Gamma_{\widetilde{D}}$.
\end{prop}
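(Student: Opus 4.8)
The plan is to produce an explicit pair of divisors $D$ and $\widetilde{D}$ whose boundaries coincide up to diffeomorphism while their graphs differ in vertex count, exploiting precisely the symplectic blow-up construction described immediately above the statement. Since the claim is an existence statement (``we may have''), a single such pair suffices.

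First I would fix any concave $SNC^+$ divisor $D \subset (M,\omega)$ — for concreteness one whose graph $\Gamma_D$ is already simple, say a single symplectic surface or a short chain — and choose a smooth point $p \in D_i$ on one component that is disjoint from all the crossing points of $D$. Performing the symplectic blow-up at $p$ inside a Darboux ball $B \subset \mathrm{int}(N_D)$ that avoids the other strata produces $(N_{\widetilde{D}},\widetilde{\omega})$, in which the proper transform $\widetilde{D_i}$ and the exceptional sphere $E$ meet transversally and $\omega$-orthogonally in a single positive double point; hence $\widetilde{D} = \widetilde{D_i} \cup E \cup \bigcup_{j \neq i} D_j$ is again an $SNC^+$ divisor. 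Choosing the weight of the blow-up small keeps the area vector close to $\underline{a}$, so the positive GS-criterion persists and $\widetilde{D}$ remains concave, making $Y_{\widetilde{D}}$ a genuine contact divisor boundary.

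Next I would read off the effect on the decorated graph. The blow-up adds a single vertex $e$ with $g(e) = 0$ and self-intersection $-1$, joins it to $v_i$ by one edge, decreases $k(v_i)$ by one, and leaves all other vertices, edges, and decorations untouched. In particular $\Gamma_{\widetilde{D}}$ has exactly one more vertex than $\Gamma_D$. On the other hand, because the blow-up is supported inside the interior ball $B$ — topologically it replaces $B^4$ by the $D^2$-bundle of Euler number $-1$ over $S^2$, both of which have boundary $S^3$, leaving an entire collar of $\partial N_D$ untouched — the boundary is unchanged up to diffeomorphism, so $Y_D \approx_{\text{Diff}} Y_{\widetilde{D}}$.

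The conclusion is then immediate: any graph isomorphism must preserve the number of vertices, so $\Gamma_D \not\approx \Gamma_{\widetilde{D}}$, whereas $Y_D \approx_{\text{Diff}} Y_{\widetilde{D}}$. The only point requiring genuine care — and it is exactly what the discussion preceding the statement supplies — is checking that the local blow-up truly preserves the boundary diffeomorphism type and yields a bona fide $SNC^+$ divisor neighborhood rather than merely an abstract smooth plumbing. Once that is granted, the vertex-count obstruction makes the non-isomorphism of the graphs automatic, and no further computation is needed.
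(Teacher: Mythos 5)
Your proposal is correct and follows essentially the same route as the paper, whose proof is the paragraph immediately preceding the proposition: perform a symplectic blow-up at a point of $D$ away from the crossings, observe that this local construction leaves the boundary diffeomorphism type unchanged while the graph gains a vertex. Your added checks (the $\omega$-orthogonal intersection of the exceptional sphere with the proper transform, persistence of the positive GS-criterion for small blow-up weight, and the change $k(v_i) \mapsto k(v_i)-1$) are sound elaborations of what the paper leaves implicit.
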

We conclude that we cannot possibly recover $\Gamma_D$ from the divisor boundary alone unless we somehow deal with ambiguities such as this. Fortunately, there is still a great deal of rigidity in the relationship between divisor boundaries and their associated graphs.

In \cite{Neumann:calculus}, the author defined a collection of eight moves which may be performed on a plumbing graph $\Gamma$ (see \cite{Neumann:calculus}, Section 2). Each of these moves have associated topological constructions which transform any associated $Y_\Gamma$ into a different plumbing $Y_{\tilde{\Gamma}}$ while preserving the diffeomorphism type.

There are only two moves which are relevant to this manuscript. They are named and defined diagrammatically by:

\textbf{(1) The $\pm 1$ blow-down:} Defined by
\[
      \begin{tikzpicture}[scale=0.5]
        \GraphInit[vstyle=Empty]
        \SetGraphUnit{3}
        \Vertex[L=$\ldots$]{A}
        \EA[LabelOut, Lpos=90, L={$(g_1,k_1 \pm 1)$}](A){B}
        \EA[LabelOut, Lpos=90, L=$\pm1$](B){BB}
        \EA[LabelOut, Lpos=270, L={$(g_2,k_2 \pm 1)$}](BB){C}
        \EA[L=$\ldots$](C){D}
  
        \EA[NoLabel](D){E}
        \EA[NoLabel](E){F}
        
        \EA[L=$\cdots$](F){G}
        \EA[LabelOut, Lpos=90, L={$(g_1,k_1)$}](G){H}
        \EA[LabelOut, Lpos=270, L={$(g_2,k_2)$}](H){I}
        \EA[L = $\cdots$](I){J}

        \Edges(A,B,BB,C,D)
        \Edges(G,H,I,J)
        \Edge[style={->}](E)(F)
  
        \AddVertexColor{black}{B,BB,C,H,I}
      \end{tikzpicture}
    \]
    or
    \[
      \begin{tikzpicture}[scale=0.5]
        \GraphInit[vstyle=Empty]
        \SetGraphUnit{3}
        \Vertex[L=$\ldots$]{A}
        \EA[LabelOut, Lpos=90, L={$(g,k \pm 1)$}](A){B}
        \EA[LabelOut, Lpos=90, L=$\pm 1$](B){BB}
  
        \EA[NoLabel](BB){C}
        \EA[NoLabel](C){D}
        
        \EA[L=$\cdots$](D){E}
        \EA[LabelOut, Lpos=90, L={$(g,k)$}](E){F}

        \Edges(A,B,BB)
        \Edges(E,F)
        \Edge[style={->}](C)(D)
  
        \AddVertexColor{black}{B,BB,F}
      \end{tikzpicture}.
    \]
and
\textbf{(2) $\RP^2$-absorption:} Defined by
\[
      \begin{tikzpicture}[scale=0.5]
        \GraphInit[vstyle=Empty]
        \SetGraphUnit{2}
        \Vertex[L=$\ldots$]{A}
        \EA[LabelOut, Lpos=90, L={$(g,k)$}](A){B}
        \EA[LabelOut, Lpos=90, L={$\delta$}](B){C}
        \NOEA[LabelOut, Lpos=90, L={$2\delta_1$}](C){D}
        \SOEA[LabelOut, Lpos=90, L={$2\delta_2$}](C){E}
  
        \EA[NoLabel](C){F}
        \EA[NoLabel](F){G}
        \EA[NoLabel](G){H}
        
        \EA[L=$\cdots$](H){I}
        \EA[LabelOut, Lpos=90, L={$(g\ \# -1, k)$}](I){J}

        \Edges(A,B,C,D)
        \Edges(C,E)
        \Edges(I,J)
        \Edge[style={->}](F)(H)
  
        \AddVertexColor{black}{B,C,D,E,J}
      \end{tikzpicture}.
    \] 
Here $\delta_i = \pm 1$, $\delta = \frac{\delta_1 + \delta_2}{2}$, and the vertex with decoration $(g\ \# -1, k)$ denotes the 
$S^1$ bundle over $\Sigma_{g} \# \RP^2$ with Euler number 
$k$.

The other moves are (by name only):
\begin{enumerate}[label=(\arabic*)]\addtocounter{enumi}{2}
    \item $0$-curve absorption: This move obviously implies that the graph contains a sphere of self intersection number zero. These graphs will be discussed in the contact setting below.
    \item Unoriented handle absorption: This move is not applicable to $SNC^+$ divisors as it involves reducing to a graph with a non-orientable vertex.
    \item Oriented handle absorption: This move is equivalent to having a sphere of self-intersection number zero. This situation is dealt with in the contact setting below. 
    \item Splitting: All of our divisor graphs will be connected and do not admit any splittings.
    \item Seifert graph exchanges: This performs a number of graph exchanges between components with one vertex and one loop and the standard star-shaped presentations of the Seifert-fibered spaces they represent. Because none of our graphs contain any loops (i.e. edges from a vertex to itself), these exchanges will never be applicable.
    \item Annulus absorption: This move is performed on general plumbings with boundary. Since none of our divisor boundaries have boundaries of their own, this move does not apply.
\end{enumerate}

\begin{defn}
  Two plumbing graphs $\Gamma_1,\Gamma_2$ are said to be 
  \emph{TPC-combinatorially related}\footnote{TPC = Topological Plumbing Calculus} if we can obtain one from the other via 
  a sequence of moves in Neumann's plumbing calculus (see \cite{Neumann:calculus}, Section 2).
\end{defn}
Which brings us to one of the main results of Neumann's paper:
\begin{thm}[\textbf{\cite{Neumann:calculus}, Theorem 3.1}]\label{thm:neumann}
  If $Y_{\Gamma}$ is diffeomorphic to $Y_{\widetilde{\Gamma}}$, then $\Gamma$ and $\widetilde{\Gamma}$ are TPC-combinatorially related.
\end{thm}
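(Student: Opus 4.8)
The plan is to prove the substantive direction; the converse, that each of the eight moves preserves the diffeomorphism type of $Y_\Gamma$, is a local cut-and-paste construction and is the easy half. The strategy is to establish a \emph{normal form} for plumbing graphs, show that every graph reduces to it through the calculus, and recover that normal form from the diffeomorphism type of $Y_\Gamma$ using the canonical torus decomposition of graph manifolds. The first ingredient is the dictionary between a plumbing graph $\Gamma$ and the structure of $Y_\Gamma$ as a graph manifold in the sense of Waldhausen: each vertex $v$ with genus $g(v)$ and Euler number $k(v)$ contributes a Seifert-fibered piece, namely the circle bundle of Euler number $k(v)$ over the surface $\Sigma_{g(v)}$ punctured once for every incident edge, and each edge prescribes a gluing of two such pieces along a boundary torus in which the roles of base and fiber are interchanged. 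Thus $Y_\Gamma$ acquires a decomposition into Seifert pieces glued along a family of tori indexed by the edges of $\Gamma$, and the decorations $(g,k)$ together with the incidence data are exactly the Seifert and gluing invariants of this decomposition.

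Next I would define normal form and show every $\Gamma$ reduces to it. The $\pm 1$ blow-down eliminates rational $(\pm 1)$-vertices of small valence; the $0$-curve and handle absorption moves dispose of the $S^1 \times S^2$ and $S^3$ summands that make the torus decomposition degenerate; and $\RP^2$-absorption together with the Seifert graph exchanges put each maximal Seifert subgraph into a canonical star-shaped presentation with standardized invariants, and in particular merge any two adjacent Seifert pieces sharing a compatible fibration so that the surviving tori are genuinely essential. I would prove that this reduction always terminates by exhibiting a complexity monovariant — for instance the lexicographic pair consisting of the number of vertices and the total absolute Euler number — that strictly decreases under each reduction, and then enumerate the irreducible configurations to pin down exactly what a normal-form graph looks like.

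Finally I would establish uniqueness of the normal form. Once a graph is in normal form its surviving tori form the essential (JSJ) torus family, and every Seifert piece is large enough to have a unique Seifert fibration up to isotopy; the torus decomposition is then unique up to isotopy by the theorems of Waldhausen and Jaco--Shalen--Johannson. Hence any diffeomorphism $Y_\Gamma \to Y_{\widetilde\Gamma}$ carries canonical tori to canonical tori, matches Seifert pieces to Seifert pieces with their invariants, and so induces an isomorphism $\Gamma^{0} \approx \widetilde\Gamma^{0}$ of the two normal forms. Since $\Gamma$ is TPC-related to $\Gamma^{0}$ and $\widetilde\Gamma$ to $\widetilde\Gamma^{0}$, transitivity yields that $\Gamma$ and $\widetilde\Gamma$ are TPC-combinatorially related.

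The hard part will be the exceptional manifolds whose Seifert fibration is \emph{not} unique --- lens spaces, prism manifolds, and the small Seifert spaces over $S^2$ or $\RP^2$ with at most three exceptional fibers --- where genuinely different normal-form graphs can present the same $3$-manifold. These are precisely the coincidences that force $\RP^2$-absorption and the Seifert exchanges into the calculus, and the delicate combinatorial heart of the argument is to verify that the moves account for \emph{all} such coincidences rather than merely some of them; the reducible case, where one must show that handle and $0$-curve absorption capture every $S^1 \times S^2$ and $S^3$ summand, demands the same kind of careful case analysis.
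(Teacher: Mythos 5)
Your plan is essentially the paper's (that is, Neumann's) own proof: the paper reduces the theorem to exactly the two steps you propose, namely that every plumbing graph reduces to a normal form via the calculus (\cite{Neumann:calculus}, Theorem 4.1) and that graphs in normal form are determined up to isomorphism by the diffeomorphism type of their boundaries (\cite{Neumann:calculus}, Theorem 4.2), the latter resting on Waldhausen's graph-manifold theory and the canonical torus decomposition just as you describe. You also correctly isolate where the real work lies — the manifolds with non-unique Seifert fibrations and the reducible case — which is precisely the case analysis Neumann carries out.
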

From this, we may conclude that the diffeomorphism type of a divisor boundary determines the isomorphism type of its plumbing graph up to topological plumbing calculus. To prove this theorem, Neumann defines what it means for a plumbing graph to be in \emph{normal form} and first shows that every plumbing graph may be put into normal form via topological plumbing calculus. Neumann then shows that the diffeomorphism type of plumbing boundaries whose graphs are in normal form do determine the isomorphism type of its associated graph. In order to write the definition of Neumann's normal form, it helps to discuss a special class of subgraphs relevant to the theorem: chains of spheres.

\subsection{Chains of Spheres}
A set of plumbing graphs important for our discussion are linear plumbings of spheres or ``chains."
  
\begin{defn}
  Let $\Gamma$ be a plumbing graph. A \emph{chain} is a subgraph $C \subset \Gamma$ of the form
    \[\begin{tikzpicture}[scale=0.5]
        \GraphInit[vstyle=Empty]
        \SetGraphUnit{2}

        \Vertex[LabelOut, Lpos=90, L=$-m_1$]{A}
        \EA[LabelOut, Lpos=90, L=$-m_2$](A){B}
        \EA[LabelOut, Lpos=90, L=$-m_3$](B){C}
        \EA[L=$\cdots$](C){D}
        \EA[LabelOut, Lpos=90, L=$-m_{\ell - 1}$](D){E}
        \EA[LabelOut, Lpos=90, L=$-m_\ell$](E){F}
        \AddVertexColor{black}{A,B,C,E,F}
        
        \Edges(A,B,C,D,E,F)
    \end{tikzpicture}\]
    with each vertex having degree at most $2$ in $\Gamma$. The integers $-m_i$ are called 
    \emph{the components of the chain $C$.}
    A chain is called \emph{maximal} if it is not contained in a strictly larger chain. A chain is \emph{normal} or \emph{in normal form} if $m_i \ge 2$ for $i > 1$ and $m_1$ is either $0$ or $\ge 2$.
\end{defn}

\begin{rem}
Chains are symmetric and so $Y_C$ is diffeomorphic to $Y_{C'}$ where $C'$ is the chain with components $(-m_\ell, \ldots, -m_1)$.
\end{rem}

The following lemma is a classical result in 3-manifold topology. There are many proofs of this result in the literature, see for example \cite{Symmington:four}.
\begin{lem}\label{lem:lens}
  Let $C \subset \Gamma$ be a chain with components $(-m_1,\ldots,-m_\ell)$ and $m_\ell \neq 0,1$. Then $Y_{C}$ is diffeomorphic to a Lens space $L(p,q)$ where 
  \begin{align}\label{eqn:continued_fraction}
    -\frac{p}{q} = m_1 - \frac{1}{m_2 - \frac{1}{\ddots - \frac{1}{m_\ell}}}.
  \end{align}
\end{lem}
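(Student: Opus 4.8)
The plan is to realize $Y_C$ as the result of integral Dehn surgery on a link in $S^3$ and then simplify that surgery diagram by Kirby calculus until a single unknot remains. First I would recall that a linear plumbing of disk bundles over $S^2$ is the $4$-manifold obtained from $D^4$ by attaching $2$-handles along a \emph{chain link} $K_1 \cup \cdots \cup K_\ell \subset S^3$---a row of unknots in which $K_i$ and $K_{i+1}$ form a Hopf link and all other pairs are unlinked---with $K_i$ framed by $-m_i$, its prescribed Euler number. Consequently $Y_C = \partial N_C$ is the closed $3$-manifold obtained by performing $(-m_1,\ldots,-m_\ell)$-surgery on this chain link. The requirement that every vertex have degree at most $2$ is exactly what guarantees that the surgery link is linear rather than branched.

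Next I would reduce the chain from its free end. Because $K_\ell$ is a meridian of $K_{\ell-1}$, a slam-dunk move deletes $K_\ell$ and replaces the coefficient $-m_{\ell-1}$ of $K_{\ell-1}$ by $-m_{\ell-1} - \frac{1}{-m_\ell} = -\left(m_{\ell-1} - \frac{1}{m_\ell}\right)$; this first move is legitimate precisely because $m_\ell \neq 0$. Iterating the slam-dunk $\ell-1$ times peels off $K_\ell, K_{\ell-1}, \ldots, K_2$ in turn, folding one additional level into a continued fraction at each stage, and leaves a single unknot $K_1$ carrying the rational coefficient
\[
  r = -m_1 - \frac{1}{-m_2 - \frac{1}{\ddots - \frac{1}{-m_\ell}}} = -\left(m_1 - \frac{1}{m_2 - \frac{1}{\ddots - \frac{1}{m_\ell}}}\right).
\]
An easy induction identifies $r$ with $-p/q$, where $p/q$ is the continued fraction in the statement. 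Since $r$-surgery on the unknot is the standard genus-one Heegaard description of a lens space---it reglues two solid tori by a map determined by $r$---this identifies $Y_C$ with $L(p,q)$.

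I would then record that the hypothesis $m_\ell \neq 0,1$ is a nondegeneracy condition ensuring $p \neq 0$, so that $Y_C$ is a genuine lens space rather than $S^1 \times S^2$ and the continued fraction above is defined at every step. The main obstacle here is bookkeeping rather than conceptual: one must pin down orientation and framing conventions consistently across the three models (the chain-link surgery presentation, the slam-dunk formula, and the surgery description of $L(p,q)$) so that the signs in the final continued fraction match the statement exactly; getting a single convention wrong flips $q \mapsto -q$ or $p/q \mapsto -p/q$. A convention-robust alternative, which I would use as a cross-check, is the graph-manifold viewpoint: decompose $Y_C$ along the tori separating the circle bundles over the punctured spheres $D_{v_i}$ and observe that the successive gluing maps multiply in $SL_2(\ZZ)$ as a product of matrices of the form $\begin{pmatrix} -m_i & -1 \\ 1 & 0 \end{pmatrix}$, whose entries reproduce the numerator and denominator of the continued fraction. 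This recovers the same $(p,q)$ without ever drawing a surgery diagram; see \cite{Symmington:four}.
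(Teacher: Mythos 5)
The paper offers no proof of this lemma at all: it is flagged as classical and outsourced to the literature, with a pointer to \cite{Symmington:four}. Your slam-dunk argument is exactly the standard proof one finds there and in textbook treatments (chain-link surgery presentation of the linear plumbing, iterated slam-dunks folding the coefficients into a continued fraction, rational surgery on the unknot as the genus-one Heegaard description of $L(p,q)$), so you have correctly filled in the citation rather than diverged from the paper; your $SL_2(\ZZ)$ cross-check is likewise standard and is close in spirit to the graph-manifold treatment the paper cites. Your explicit attention to orientation and framing conventions is warranted: note that the paper's own convention tolerates a negative entry, since it later writes $-p/q = r$ with $r > 1$ for normalized chains, so do not expect $q > 0$.

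One caveat in your closing remark. The hypothesis $m_\ell \neq 0,1$ legitimizes only the \emph{innermost} slam-dunk; it guarantees neither that $p \neq 0$ nor that the continued fraction is defined at every intermediate stage, contrary to what you assert. For example, a chain of length two with $m_1 = m_2 = -1$ (decorations $+1, +1$) satisfies the hypothesis yet has $[m_1,m_2] = -1 - \tfrac{1}{-1} = 0$, i.e.\ $p = 0$ and $Y_C \cong S^1 \times S^2$; and a chain ending in the tail $(m_{\ell-1}, m_\ell) = (-1,-1)$ has tail value $0$, so the next slam-dunk would divide by zero. What rescues your argument is that a vanishing tail at level $j \geq 2$ appears as a denominator at level $j-1$, so the displayed continued fraction in the statement is itself undefined in exactly those cases; the lemma is only meaningful when all tails at levels $2,\ldots,\ell$ are nonzero, and that is precisely the condition under which your $\ell - 1$ slam-dunks are all legal. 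You should state this definedness as the implicit hypothesis rather than attempt to derive it from $m_\ell \neq 0,1$. (This looseness is inherited from the statement itself; in the paper's actual use, chains are normalized with $m_i \geq 2$ for $i > 1$, where no degeneracy can occur.)
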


From now on, we introduce the following notational conventions: 
\begin{itemize}
  \item We let $[m_1,\ldots,m_\ell]$ denote the finite continued fraction with components $(m_1,\ldots,m_\ell)$ as in the right-hand side of \ref{eqn:continued_fraction}.
  \item We let $C(m_1,\ldots, m_{\ell})$ the chain with components $(-m_1,\ldots,-m_n)$. 
  \item We let $L(m_1,\ldots, m_\ell)$ denote the lens space $L(p,q)$ with $-p/q = [m_1,\ldots, m_{\ell}]$.
\end{itemize}


Each positive rational number $r$ has a unique normal chain $C(m_1,\ldots,m_{\ell})$ 
associated to it where $r = [m_1,\ldots, m_{\ell}]$. This follows 
directly from the fact that continued fraction expansions of rational numbers are unique 
if all the components $m_i$ for $i > 1$ satisfy $m_i \ge 2$. Thus every normalized chain 
is homemorphic to a lens space $L(p,q)$ where $p,q$ are relatively prime and $-p/q = r$.
\subsection{TPC normal form}

We are now able define Neumann's normal form.
\begin{defn}[\textbf{\cite{Neumann:calculus}, Section 4}]
  We say that a plumbing graph $\Gamma$ is \emph{in TPC normal form} if the following criteria are met:
  \begin{enumerate}
    \item No operation from topological plumbing calculus may be performed, except that $\Gamma$ may have a component 
      of the form:
        \[
          \begin{tikzpicture}[scale=0.5]
            \GraphInit[vstyle=Empty]
            \SetGraphUnit{2}

            \Vertex[LabelOut, Lpos=90, L=$a_1$]{A}
            \EA[L=$\cdots$](A){B}
            \EA[LabelOut, Lpos=90, L=$a_k$](B){C}
            \EA[LabelOut, Lpos=90, L=$-1$](C){D}
            \NOEA[LabelOut, Lpos=90, L=$-2$](D){E}
            \SOEA[LabelOut, Lpos=90, L=$-2$](D){F}
            \AddVertexColor{black}{A,C,D,E,F}
        
            \Edges(A,B,C,D,E)
            \Edges(D,F)
          \end{tikzpicture}.
        \]
    
    \item The weights $e_i$ on all chains of $\Gamma$ satisfy $e_i \le -2$.
    
    \item No portion of the graph has the from
      \[
        \begin{tikzpicture}[scale=0.5]
          \GraphInit[vstyle=Empty]
          \SetGraphUnit{2}

          \Vertex[L=$\cdots$]{A}
          \EA[LabelOut, Lpos=90, L=$-1$](A){D}
          \NOEA[LabelOut, Lpos=90, L=$-2$](D){E}
          \SOEA[LabelOut, Lpos=90, L=$-2$](D){F}
          \AddVertexColor{black}{D,E,F}
      
          \Edges(A,D,E)
          \Edges(D,F)
        \end{tikzpicture}
      \]
      unless it is in a component of $\Gamma$ of the form
      \[
        \begin{tikzpicture}[scale=0.5]
          \GraphInit[vstyle=Empty]
          \SetGraphUnit{2}

          \Vertex[LabelOut, Lpos=90, L=$a_k$]{A}
          \EA[L=$\cdots$](A){B}
          \EA[LabelOut, Lpos=90, L=$a_k$](B){C}
          \EA[LabelOut, Lpos=90, L=$-1$](C){D}
          \NOEA[LabelOut, Lpos=90, L=$-2$](D){E}
          \SOEA[LabelOut, Lpos=90, L=$-2$](D){F}
          \AddVertexColor{black}{A,C,D,E,F}
      
          \Edges(A,B,C,D,E)
          \Edges(D,F)
        \end{tikzpicture}.
      \]
    \item No component of $\Gamma$ is isomorphic to:
      \[
        \begin{tikzpicture}[scale=0.5]
          \GraphInit[vstyle=Empty]
          \SetGraphUnit{2}

          \Vertex[LabelOut, Lpos=90, L=$-2$]{A}
          \EA[L=$\cdots$](A){B}
          \EA[LabelOut, Lpos=90, L=$-2$](B){C}
          \EA[LabelOut, Lpos=90, L=$-2$](C){D}
          \AddVertexColor{black}{A,C,D}

          \Edges(A,B,C,D)
          \tikzset{EdgeStyle/.style = {bend right=60}}
          \Edge(A)(D)
        \end{tikzpicture}.
      \]
  \end{enumerate}
\end{defn}

In the context of divisor graphs, (4) is a special case of a \emph{circular spherical divisor graph}. Such divisors and their graphs were classified in their entirety in \cite{LM:local} and are not relevant to this paper. 

Neumann's major results about graphs in normal form result in a proof of
\begin{thm}[\textbf{\cite{Neumann:calculus}, Theorem 3.1}]\label{thm:neumann}
  If $Y_{D}$ is diffeomorphic to $Y_{\widetilde{D}}$, then $\Gamma_{D}$ and $\Gamma_{\widetilde{D}}$ are TPC-combinatorially related.
\end{thm}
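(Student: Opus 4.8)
The plan is to follow the classical normal-form strategy, splitting the argument into an \emph{existence} step and a \emph{uniqueness} step. For existence I would show that every plumbing graph can be carried into TPC normal form by a finite sequence of calculus moves; for uniqueness I would show that two normal-form graphs with diffeomorphic boundaries are isomorphic up to the explicitly permitted ambiguities. Granting both, the theorem is immediate: given that $Y_D$ and $Y_{\widetilde{D}}$ are diffeomorphic, reduce $\Gamma_D$ and $\Gamma_{\widetilde{D}}$ to normal forms $\Gamma_D^{\mathrm{nf}}$ and $\Gamma_{\widetilde{D}}^{\mathrm{nf}}$; since every calculus move preserves the diffeomorphism type of the boundary, the two normal-form boundaries are still diffeomorphic, so uniqueness gives $\Gamma_D^{\mathrm{nf}} \approx \Gamma_{\widetilde{D}}^{\mathrm{nf}}$, and concatenating the first reduction with the reverse of the second exhibits $\Gamma_D$ and $\Gamma_{\widetilde{D}}$ as TPC-combinatorially related.

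\textbf{Existence.} To reduce an arbitrary graph to normal form I would introduce a complexity measure valued in a well-ordered set --- for instance the lexicographically ordered tuple recording the number of vertices, then the number of edges, then the total genus $\sum_v g(v)$, then $\sum_v(|k(v)|+1)$ --- and verify that each genuinely simplifying move (the $\pm 1$ blow-down, $\RP^2$-absorption, $0$-curve and handle absorption, splitting, and annulus absorption) strictly decreases this measure. The remaining normalizing adjustments, which merely force the chain weights to satisfy $e_i \le -2$ and absorb stray $-1$'s, cannot be iterated indefinitely for the same reason. Termination then follows from well-foundedness, and the terminal graph satisfies each clause of the normal-form definition by construction: the exceptional configurations permitted there (such as a $-1$ vertex carrying two $-2$ leaves) are exactly those on which every move has been rendered inapplicable.

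\textbf{Uniqueness, and the main obstacle.} The substance of the theorem lies here, and it is genuine $3$-manifold topology. I would use that each $Y_\Gamma$ is a graph manifold and that, once $\Gamma$ is in normal form, the torus decomposition read off from the plumbing graph coincides with the JSJ (minimal incompressible-torus) decomposition: vertices correspond to Seifert-fibered pieces and edges to the gluing tori. By the Jaco--Shalen--Johannson uniqueness of the JSJ decomposition together with Waldhausen's classification of graph manifolds, this decomposition is determined by the diffeomorphism type of $Y_\Gamma$ up to isotopy, and the Seifert invariants of each piece are recovered via Seifert's classification; reassembling this canonical data reconstructs $\Gamma^{\mathrm{nf}}$ and yields the isomorphism. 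The hard part --- and the reason the normal-form conditions are so intricate --- is precisely the $3$-manifolds that defeat this uniqueness: Seifert pieces admitting more than one fibration (lens spaces, $S^1 \times S^2$, the small Seifert spaces over $S^2$ with few exceptional fibers, and the flat or $\RP^2$-base cases), together with the non-uniqueness of continued-fraction expansions of the lens-space chains. Conditions (2)--(4) and the single allowed component in (1) are engineered to select one representative in each degenerate case --- the normal-chain condition $m_i \ge 2$ picking out the unique admissible continued fraction of \Cref{lem:lens}, and the constraints on the $-1$ and $-2$ configurations killing the residual fibration ambiguities. The remaining, most delicate task is therefore a careful case analysis verifying that no two distinct normal forms can produce the same graph manifold.
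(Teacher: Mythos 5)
Your existence-plus-uniqueness strategy is exactly how the paper presents Neumann's proof of this theorem: reduce both graphs to TPC normal form (the paper's \Cref{lem:reduction}, Neumann's Theorem 4.1), invoke the fact that normal-form graphs with diffeomorphic boundaries are isomorphic (\Cref{thm:main_neumann}, Neumann's Theorem 4.2, proved via the Waldhausen/JSJ graph-manifold theory you describe, with the exceptional Seifert fibrations and lens-space chains handled precisely as you indicate), and concatenate the two reductions. The one soft spot is your termination argument for the existence step: chain normalization cannot be accomplished by complexity-decreasing moves alone --- forcing the weights to satisfy $e_i \le -2$ requires interleaving blow-ups, which increase your lexicographic tuple, so the monovariant is not monotone --- and termination there instead rests on the Euclidean/continued-fraction algorithm underlying \Cref{lem:lens}; this is a repairable detail rather than a change of approach, and it is exactly the point where the paper's own contact-preserving variant (\Cref{lem:modified_reduction}) must work hardest.
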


It is first proven that:
\begin{lem}[\textbf{\cite{Neumann:calculus}, Theorem 4.1}]\label{lem:reduction}
  Any plumbing graph can be reduced to normal form using topological plumbing calculus.
\end{lem}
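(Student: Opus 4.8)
The plan is to exhibit an explicit reduction algorithm and prove that it halts at a graph satisfying (1)--(4). To this end I would attach to every plumbing graph $\Gamma$ a complexity $c(\Gamma)$ taking values in a well-ordered set, arrange that each of Neumann's eight moves, applied in the reducing direction, strictly decreases $c(\Gamma)$, and then establish the key contrapositive: if $\Gamma$ is \emph{not} in normal form, then at least one move is available that lowers $c(\Gamma)$. Well-foundedness of the target order then forces the process to terminate after finitely many steps, and a final inspection identifies the terminal graphs with the normal forms.

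For $c$ I would use a lexicographically ordered tuple whose leading entry records ``global'' defects and whose trailing entries record ``chain-local'' defects, so that the order of operations is dictated by the order itself. The first stage clears genus and orientability: a vertex of positive genus, a non-orientable vertex, or a sphere of self-intersection $0$ triggers oriented handle absorption, unoriented handle absorption, $\RP^2$-absorption, or $0$-curve absorption, each of which strictly lowers the leading entry (which I would take to be total genus plus vertex count). Hence only finitely many such moves occur, and afterward every obstruction is confined to the maximal chains of $\Gamma$, i.e.\ to its linear strings of spheres meeting away from the high-valence nodes.

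The second stage normalizes each maximal chain. By \Cref{lem:lens} a chain $C(m_1,\ldots,m_\ell)$ is governed by its continued fraction $[m_1,\ldots,m_\ell]$, and the target normal form --- all interior weights $\le -2$ and a leading weight either $0$ or $\le -2$ --- is precisely the canonical continued-fraction expansion, whose uniqueness was noted above. I would run the Euclidean-type normalization on the continued fraction: an interior weight $-1$ (an entry $m_i=1$) is eliminated by a $\pm 1$ blow-down via the identity $[\ldots,m_{i-1},1,m_{i+1},\ldots]=[\ldots,m_{i-1}-1,m_{i+1}-1,\ldots]$; an interior $0$ is removed by $0$-curve absorption; and any remaining entry lying outside the range $\le -2$ is brought into range by an auxiliary blow-up followed by blow-downs. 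The chain-local complexity I would track is the size $p+q$ of the associated lens space $L(p,q)$ together with the chain length; this strictly decreases at each such step even though a blow-up transiently raises the vertex count. This is exactly why the chain complexity must precede the vertex count in the lexicographic order.

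The hard part is the termination bookkeeping together with the analysis of the terminal configurations. Two points require care. First, I must check that no single move lowers a trailing entry while raising a leading one; concretely, the blow-ups used for chain normalization must be confined to a chain's interior so that they create neither positive genus nor a new $0$-sphere. Second, I must pin down exactly the graphs on which a $\pm 1$ blow-down is combinatorially available yet unproductive, so that leaving them untouched is consistent with (1)--(4): these are the exceptional component of condition (1) --- a chain ending in a $-1$ node carrying two $-2$ leaves --- and degenerate pieces such as the $-2$ cycle of condition (4), for which one verifies directly that every attempted blow-up/blow-down cycles back to an equivalent graph without decreasing $c(\Gamma)$. Isolating this short list of irreducible terminal pieces, and confirming that everything else is driven monotonically to normal form, is the crux of the argument.
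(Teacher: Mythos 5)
First, a framing remark: the paper does not prove this lemma at all --- it is imported wholesale as Theorem~4.1 of \cite{Neumann:calculus}, with the paper only noting that Neumann's procedure hinges on a chain-normalization algorithm that crucially uses $+1$ blow-downs. Your sketch is therefore a reconstruction of Neumann's original argument, and its overall architecture (explicit reduction algorithm, well-founded complexity, analysis of terminal configurations) is indeed the shape of Neumann's proof. But as written the sketch has two genuine defects. The more serious one is internal: you declare that \emph{each single move} in the reducing direction strictly decreases $c(\Gamma)$ and that the leading entry of $c$ is ``total genus plus vertex count,'' yet your stage-two chain normalization requires auxiliary blow-ups, which \emph{increase} the vertex count and hence the leading entry. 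Your later remark that ``the chain complexity must precede the vertex count'' contradicts your own definition of the leading entry. The descent cannot be organized move-by-move: a chain carrying a weight $+3$ vertex admits no complexity-lowering single move at all, so the contrapositive ``not in normal form $\Rightarrow$ some move lowers $c$'' is false as stated. The correct unit of descent is a \emph{composite} operation (blow-up followed by a cascade of blow-downs) whose net effect is measured by an invariant blind to transient vertex creation, such as the continued-fraction data of the chain; this is exactly how Neumann proceeds, and it is mirrored by the paper's own contact-side analogue in \Cref{lem:modified_reduction}, where the named macro-moves (0-curve transfer, sliding, chain replacement) are precisely such composites.

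The second defect is the description of stage one. Normal form does not exclude positive-genus or non-orientable vertices, so there is nothing to ``clear'': no move is triggered by positive genus, and indeed no move in the calculus removes genus. The absorption moves run in the opposite direction from what you describe --- they are triggered by weight-$0$ \emph{spheres} in specific local configurations (a valence-two $0$-sphere with both edges to the same neighbor for handle absorption, the $(\delta,2\delta_1,2\delta_2)$ configuration for $\RP^2$-absorption) and they delete those spheres at the cost of \emph{adding} genus or crosscaps to a neighbor. This also breaks your termination bookkeeping even where single moves do apply: oriented handle absorption removes one vertex and adds one handle, so ``genus plus vertex count'' is unchanged, and unoriented handle absorption adds a Klein handle (two crosscaps) while removing one vertex, so your leading entry can actually increase. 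Using vertex count alone for the absorption stage would fix this particular accounting, but the single-move descent framework and the ``clearing genus'' goal both need to be replaced before the sketch matches a workable proof.
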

One then only needs to show:
\begin{thm}[\textbf{\cite{Neumann:calculus}, Theorem 4.2 }]\label{thm:main_neumann}
Let $\Gamma,\widetilde{\Gamma}$ be two plumbing graphs in topological normal form. Then $Y_{\Gamma}$ is diffeomorphic to $Y_{\widetilde{\Gamma}}$ if 
and only if $\Gamma$ and $\widetilde{\Gamma}$ are isomorphic.
\end{thm}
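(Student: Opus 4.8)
The forward implication is immediate: the plumbing construction assigns to each decorated graph a $4$-manifold, and hence its boundary $3$-manifold $Y_\Gamma$, in a way that depends only on the isomorphism type of the decorated graph, so isomorphic graphs yield diffeomorphic plumbing boundaries. All of the content lies in the converse, and my plan is to recover the graph from the diffeomorphism type of $Y_\Gamma$ by exhibiting $Y_\Gamma$ as a \emph{graph manifold} and invoking the uniqueness of its canonical torus decomposition. Since distinct connected components of $\Gamma$ produce distinct components of the boundary, I would first reduce to the connected case and treat each piece separately.

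First I would build the decomposition of $Y_\Gamma$ coming directly from $\Gamma$. A vertex $v$ of genus $g(v)$ and weight $k(v)$ contributes the circle bundle of Euler number $k(v)$ over $\Sigma_{g(v)}$ with one open disk removed for each incident edge; this piece is Seifert fibered over a surface with boundary. Each edge $e_{ij}$ contributes a gluing torus along which the two adjacent pieces are identified by the standard plumbing matrix, which interchanges the base and fiber directions with the sign recorded by the edge. Thus $Y_\Gamma$ is assembled from Seifert pieces glued along a system of tori $\mathcal{T}_\Gamma$ indexed by the edges of $\Gamma$, realizing $Y_\Gamma$ as a graph manifold in the sense of Waldhausen.

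Next I would show that, when $\Gamma$ is in normal form, the torus system $\mathcal{T}_\Gamma$ is isotopic to the canonical Jaco--Shalen--Johannson torus decomposition of $Y_\Gamma$, which is a diffeomorphism invariant by JSJ theory together with Waldhausen's classification of graph manifolds. The normal-form conditions are precisely what make the Seifert pieces maximal and the gluings non-fibered: the requirement $e_i \le -2$ on chain weights forces each maximal chain to be a genuine lens-space piece whose continued fraction $[m_1,\dots,m_\ell]$ is the \emph{unique} normalized expansion of its slope (\Cref{lem:lens}), while the exclusion of $0$-curves, of the $(-1,-2,-2)$ configurations, and of the circular $(-2,\dots,-2)$ graph removes exactly the pieces on which a second Seifert structure exists. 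With those excluded, the minimal Seifert fibration on each piece is unique up to isotopy, so the decomposition graph is canonical. A diffeomorphism $Y_\Gamma \cong Y_{\widetilde\Gamma}$ then carries the canonical tori of one to those of the other, inducing a graph isomorphism matching Seifert pieces and gluing tori; it respects the decorations because the base orbifold recovers $g(v)$, the Euler number of each piece relative to its gluings recovers $k(v)$, each edge sign is recovered from the gluing matrix across the common torus, and the chain data is recovered from the unique normalized continued fraction of its lens-space slope.

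The main obstacle is the structural step just invoked: controlling the non-uniqueness of Seifert fibrations. A handful of manifolds ($S^1 \times S^2$, lens spaces, the orientable circle bundles over $S^2$ of small Euler number, $T^3$ and its relatives, and certain prism and Euclidean manifolds) admit essentially different Seifert structures, and for these the passage from diffeomorphism type back to a canonical decomposition graph fails. The entire purpose of the normal-form conditions, and the reason the definition is as intricate as it is, is to carve out exactly these exceptional manifolds, either excluding them or pinning down a unique preferred graph for each. Verifying case by case that the normal form leaves no residual ambiguity is where the real work lies; this is the heart of Neumann's argument and is what one must reproduce to complete the proof.
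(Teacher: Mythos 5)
This theorem is one the paper imports verbatim from Neumann and never proves internally, so the relevant comparison is with Neumann's original argument; your outline does reproduce its overall architecture (Seifert pieces glued along plumbing tori, canonical decompositions of graph manifolds in the sense of Waldhausen, recovery of the decorations from Seifert invariants and normalized continued fractions), and the forward implication is indeed immediate.

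Nevertheless there are two genuine gaps. First, your structural claim that the full plumbing torus system $\mathcal{T}_\Gamma$ is isotopic to the canonical torus decomposition of $Y_\Gamma$ is false as stated, even in normal form: a degree-two sphere vertex contributes a circle bundle over an annulus, i.e.\ a $T^2 \times I$ piece, and a degree-one sphere vertex contributes a solid torus, and neither is permitted as a piece of a minimal (JSJ/Waldhausen) decomposition. In particular, for a star-shaped graph in normal form the manifold $Y_\Gamma$ is a closed Seifert fibered space whose canonical torus system is \emph{empty} while $\mathcal{T}_\Gamma$ is not; the chains must be amalgamated into the adjacent node as singular fibers, with slopes encoded by the continued fractions $[m_1,\ldots,m_\ell]$, or into the gluing matrices between nodes. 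So the canonical system is only a proper subsystem of $\mathcal{T}_\Gamma$, and recovering the chain data rests on uniqueness of normalized continued fraction expansions together with uniqueness of Seifert invariants, not on uniqueness of tori. Second, as you concede in your final paragraph, the case analysis of manifolds admitting several Seifert fibrations (lens spaces, circle bundles of small Euler number, torus bundles such as the circular all-$(-2)$ graph, prism manifolds, the $(-1,-2,-2)$ configurations) is deferred entirely -- and that analysis \emph{is} the theorem: the normal-form conditions have content precisely because each such exceptional manifold must be shown to correspond to a unique normal-form graph. Naming the hard step and postponing it yields a correct strategy sketch, not a proof; to complete it you must either carry out Neumann's exceptional-case verification or invoke Waldhausen's classification together with the full Seifert-invariant uniqueness statements.
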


To prove \Cref{lem:reduction}, Neumann outlines a procedure for using the topological plumbing calculus to normalize a plumbing graph. Crucial to this procedure is an algorithm for normalizing chains. As we will see in the next section, this algorithm cannot be followed directly in the contact setting while preserving the contactomorphism type of the divisor boundary.

\section{Contact Plumbing Calculus}\label{sec:cpc}

We will see that not every move in the topological plumbing calculus respects the contact topology of the boundary. After determining which moves in the calculus are capable of being carried out in the contact setting, we will augment the calculus with new moves. These moves indeed are already possible within Neumann's calculus but nonetheless have their own unique usage which warrants their own names.

\subsection{Contact-Sensitive TPC Moves}\label{sec:tpc_moves}

\Cref{thm:neumann} tells us that if $Y_D$ and $Y_{\tilde{D}}$ are diffeomorphic, then their divisor graphs are TPC-related. Since contactomorphism implies diffeomorphism, this result clearly also holds with respect to contactomorphism. The issue is that, while all associated topological constructions in Neumann's topological plumbing calculus preserve the diffeomorphism type of the divisor boundary, the moves may change the contactomorphism type of a divisor boundary. An example of a move that is not contact-preserving is the \emph{interior/exterior $+1$ blow-up/blow-down} 
(\cite{Neumann:calculus}, Move 1) defined by 
\[
      \begin{tikzpicture}[scale=0.5]
        \GraphInit[vstyle=Empty]
        \SetGraphUnit{3}
        \Vertex[L=$\ldots$]{A}
        \EA[LabelOut, Lpos=90, L={$(g_1,k_1)$}](A){B}
        \EA[LabelOut, Lpos=90, L={$(g_2,k_2)$}](B){C}
        \EA[L=$\ldots$](C){D}
  
        \EA[NoLabel](D){E}
        \EA[NoLabel](E){F}
        
        \EA[L=$\cdots$](F){G}
        \EA[LabelOut, Lpos=90, L={$(g_1,k_1+1)$}](G){H}
        \EA[LabelOut, Lpos=90, L=$1$](H){I}
        \EA[LabelOut, Lpos=90, L={$(g_2,k_2+1)$}](I){J}
        \EA[L = $\cdots$](J){K}

        \Edges(A,B,C,D)
        \Edges(G,H,I,J,K)
        \Edge[style={->}](E)(F)
  
        \AddVertexColor{black}{B,C,H,I,J}
      \end{tikzpicture}
    \]
    or
    \[
      \begin{tikzpicture}[scale=0.5]
        \GraphInit[vstyle=Empty]
        \SetGraphUnit{3}
        \Vertex[L=$\ldots$]{A}
        \EA[LabelOut, Lpos=90, L={$(g,k)$}](A){B}
  
        \EA[NoLabel](B){C}
        \EA[NoLabel](C){D}
        
        \EA[L=$\cdots$](D){E}
        \EA[LabelOut, Lpos=90, L={$(g,k+1)$}](E){F}
        \EA[LabelOut, Lpos=90, L=$1$](F){G}

        \Edges(A,B)
        \Edges(E,F,G)
        \Edge[style={->}](C)(D)
  
        \AddVertexColor{black}{B,F,G}
      \end{tikzpicture}.
    \]
The following example demonstrating this was first given in \cite{LM:capping} Example 2.21:
\[
  \begin{tikzpicture}[scale=0.5]
    \GraphInit[vstyle=Empty]
    \SetGraphUnit{3}
    \Vertex[LabelOut, Lpos=90,L=$1$]{A}
    \EA[LabelOut, Lpos=90, L={$2$}](A){B}
    
    \Edges(A,B)
    \AddVertexColor{black}{A,B}
  \end{tikzpicture}
\]
which is the $+1$ blow-up of 
\[
  \begin{tikzpicture}[scale=0.5]
    \GraphInit[vstyle=Empty]
    \SetGraphUnit{3}
    \Vertex[LabelOut, Lpos=90,L=$1$]{A}
    
    \AddVertexColor{black}{A}
  \end{tikzpicture}.
\]
The contact 3-manifold associated to the second graph is $(S^3,\xi_{std})$ but 
the contact structure associated to the first is overtwisted. 

The \emph{interior/exterior $-1$ blow-up/blow-down} defined by
    \[
      \begin{tikzpicture}[scale=0.5]
        \GraphInit[vstyle=Empty]
        \SetGraphUnit{3}
        \Vertex[L=$\ldots$]{A}
        \EA[LabelOut, Lpos=90, L={$(g_1,k_1)$}](A){B}
        \EA[LabelOut, Lpos=90, L={$(g_2,k_2)$}](B){C}
        \EA[L=$\ldots$](C){D}
  
        \EA[NoLabel](D){E}
        \EA[NoLabel](E){F}
        
        \EA[L=$\cdots$](F){G}
        \EA[LabelOut, Lpos=90, L={$(g_1,k_1-1)$}](G){H}
        \EA[LabelOut, Lpos=90, L=$-1$](H){I}
        \EA[LabelOut, Lpos=90, L={$(g_2,k_2-1)$}](I){J}
        \EA[L = $\cdots$](J){K}

        \Edges(A,B,C,D)
        \Edges(G,H,I,J,K)
        \Edge[style={->}](E)(F)
  
        \AddVertexColor{black}{B,C,H,I,J}
      \end{tikzpicture}
    \]

    \[
      \begin{tikzpicture}[scale=0.5]
        \GraphInit[vstyle=Empty]
        \SetGraphUnit{3}
        \Vertex[L=$\ldots$]{A}
        \EA[LabelOut, Lpos=90, L={$(g,k)$}](A){B}
  
        \EA[NoLabel](B){C}
        \EA[NoLabel](C){D}
        
        \EA[L=$\cdots$](D){E}
        \EA[LabelOut, Lpos=90, L={$(g,k-1)$}](E){F}
        \EA[LabelOut, Lpos=90, L=$-1$](F){G}

        \Edges(A,B)
        \Edges(E,F,G)
        \Edge[style={->}](C)(D)
  
        \AddVertexColor{black}{B,F,G}
      \end{tikzpicture}
    \]
is contact-preserving. This was shown in \cite{LM:local} using methods from 
toric geometry. Their construction exploits the toric structure of a neighborhood of the intersection of two smooth divisors. A toric diagram for a portion of a concave neighborhood can be obtained near the intersection points and the -1 blow-up can be performed in the toric region by modifying this diagram. One can then glue in the 4-manifold associated to the modified diagram by cutting out the toric region in the original neighborhood. This can be done while preserving concavity to yield a concave neighborhood of the symplectic divisor obtained after performing the blow-up.

Additionally, many 
of the moves outlined in topological plumbing calculus are not-applicable as they involve vertices with non-orientable bases which have no direct symplectic analog.

Every component of an $SNC^+$ divisor is orientable and so any of the moves involving non-orientable surfaces (\cite{Neumann:calculus}, Moves 2 and 4) can be completely 
removed from consideration with the exception \emph{$\RP^2$-absorption}:
\[
      \begin{tikzpicture}[scale=0.5]
        \GraphInit[vstyle=Empty]
        \SetGraphUnit{2}
        \Vertex[L=$\ldots$]{A}
        \EA[LabelOut, Lpos=90, L={$(g,k)$}](A){B}
        \EA[LabelOut, Lpos=90, L={$\delta$}](B){C}
        \NOEA[LabelOut, Lpos=90, L={$2\delta_1$}](C){D}
        \SOEA[LabelOut, Lpos=90, L={$2\delta_2$}](C){E}
  
        \EA[NoLabel](C){F}
        \EA[NoLabel](F){G}
        \EA[NoLabel](G){H}
        
        \EA[L=$\cdots$](H){I}
        \EA[LabelOut, Lpos=90, L={$(g\ \# -1, k)$}](I){J}

        \Edges(A,B,C,D)
        \Edges(C,E)
        \Edges(I,J)
        \Edge[style={->}](F)(H)
  
        \AddVertexColor{black}{B,C,D,E,J}
      \end{tikzpicture}.
    \] 
Here $\delta_i = \pm 1$, $\delta = \frac{\delta_1 + \delta_2}{2}$, and the vertex with decoration $(g\ \# -1, k)$ denotes the 
$S^1$ bundle over $\Sigma_{g} \# \RP^2$ with Euler number 
$k$. This move has no clear symplectic analog but subtlety appears in this story due to the simple fact that the 
manifold obtained from
\[
      \begin{tikzpicture}[scale=0.5]
        \GraphInit[vstyle=Empty]
        \SetGraphUnit{2}
        \Vertex[L=$\ldots$]{A}
        \EA[LabelOut, Lpos=90, L={$\delta$}](A){B}
  
        \NOEA[LabelOut, Lpos=90, L={$2\delta_1$}](B){C}
        \SOEA[LabelOut, Lpos=90, L={$2\delta_2$}](B){D}

        \Edges(A,B,C)
        \Edges(B,D)
  
        \AddVertexColor{black}{B,C,D}
      \end{tikzpicture}
    \]
    by cutting at the plumbing torus corresponding to the leftmost edge is 
    diffeomorphic to an orientable tubular neighborhood of a Klein bottle. This subtlety will 
    be expanded upon in \Cref{sec:klein}.

    Edges from a vertex to itself never appear in $SNC^+$ plumbing graphs by the \emph{simplicity} condition (i.e. all the 
    components of $D$ must be smooth manifolds). This allows us to discard any moves involving loops (\cite{Neumann:calculus}, Move 7). 
    Since all intersections of $SNC^+$ divisors are positive, edge signs do not appear and so any moves involving edge signs 
    can be discarded as well (\cite{Neumann:calculus}, Moves 3,5, and 7). Finally, all of our contact 3-manifolds are without boundary 
    so we may discard any moves dealing with graphs with non-empty boundary (Moves 6 and 8). Thus, the only move from Neumann's calculus that is applicable and contact-preserving is the 
    $-1$ blow-up/blow-down.

\subsection{Contact Plumbing Calculus}\label{sec:cpc_description}

The discussion in the previous section tells us that we are disarmed considerably in the contact setting. This makes proving 
an analog of \Cref{lem:reduction} more difficult since it is unclear whether one can reduce divisor graphs to their normal form with $-1$ blow-ups alone. 

Fortunately, since many of Neumann's moves are not-applicable, we merely have 
to avoid using the $+1$ blow-up/blow-down or $\RP^2$-absorptions to reduce to normal form. The proof of \Cref{lem:reduction} (\cite{Neumann:calculus}, Theorem 4.1) crucially uses $+1$ blow-downs to normalize chains and so we must find another way to normalize chains in a way that preserves the contact structure of the divisor boundary. It turns out that $-1$ blow-ups are enough to get pretty far in the normalization process and draw some conclusions.

It is convenient to introduce a few additional contact-preserving moves 
that we can work with. These moves follow directly from applications of $-1$ blow-ups and blow-downs but it is convenient to give them their own names:
\begin{con}[0-curve transfer]
  Consider the following situation
  \[\begin{tikzpicture}[scale=0.5]
    \GraphInit[vstyle=Empty]
    \SetGraphUnit{2}
    \Vertex[L=$\cdots$]{A}
    \EA[LabelOut, Lpos=90, L=$A$](A){B}
    \EA[LabelOut, Lpos=90, L=$0$](B){C}
    \EA[LabelOut, Lpos=90, L=$B$](C){D}
    \EA[L=$\cdots$](D){E}
    \AddVertexColor{black}{B,C,D}
    
    \Edges(A,B,C,D,E)
\end{tikzpicture}.\]
We may blow up either edge connected to the $0$-curve. For example, we may blow up the left edge
\[\begin{tikzpicture}[scale=0.5]
  \GraphInit[vstyle=Empty]
  \SetGraphUnit{2}
  \Vertex[L=$\cdots$]{A}
  \EA[LabelOut, Lpos=90, L=$A-1$](A){B}
  \EA[LabelOut, Lpos=90, L=$-1$](B){C}
  \EA[LabelOut, Lpos=90, L=$-1$](C){D}
  \EA[LabelOut, Lpos=90, L=$B$](D){E}
  \EA[L=$\cdots$](E){F}
  \AddVertexColor{black}{B,C,D,E}
  
  \Edges(A,B,C,D,E,F)
\end{tikzpicture}\]
then blow down the $(-1)$-curve next to the $B$-curve to arrive at
\[\begin{tikzpicture}[scale=0.5]
  \GraphInit[vstyle=Empty]
  \SetGraphUnit{2}
  \Vertex[L=$\cdots$]{A}
  \EA[LabelOut, Lpos=90, L=$A-1$](A){B}
  \EA[LabelOut, Lpos=90, L=$0$](B){C}
  \EA[LabelOut, Lpos=90, L=$B+1$](C){D}
  \EA[L=$\cdots$](D){E}
  \AddVertexColor{black}{B,C,D}
  
  \Edges(A,B,C,D,E)
\end{tikzpicture}.\]
Effectively, we have taken from the $A$-curve and added to the $B$-curve. This process is called \emph{0-curve transfer}. It's 
clear that a similar process may be used to transfer from the $B$-curve to the $A$-curve. This process can be iterated to increase 
the transfer between the chains. We will denote transfers with a directed edge
\[\begin{tikzpicture}[scale=0.5]
  \GraphInit[vstyle=Empty]
  \SetGraphUnit{2}

  \Vertex[L=$\cdots$]{A}
  \EA[LabelOut, Lpos=90, L=$A$](A){B}
  \EA[LabelOut, Lpos=90, L=$0$](B){C}
  \EA[LabelOut, Lpos=90, L=$B$](C){D}
  \EA[L=$\cdots$](D){E}

  \EA[NoLabel](E){F}
  \EA[NoLabel](F){G}

  \EA[L=$\cdots$](G){H}
  \EA[LabelOut, Lpos=90, L=$A - k$](H){I}
  \EA[LabelOut, Lpos=90, L=$0$](I){J}
  \EA[LabelOut, Lpos=90, L=$B + k$](J){K}
  \EA[L=$\cdots$](K){L}
  \AddVertexColor{black}{B,C,D,I,J,K}
  
  \Edges(A,B,C,D,E)
  \Edges(H,I,J,K,L)
  \Edge[style={->}](F)(G)
  \Edge[labelstyle={draw},label=$k$,style={bend right,out=-90,in=-90, ->,color=white}](B)(D)
\end{tikzpicture}.\]

A special case is the following
\[\begin{tikzpicture}[scale=0.5]
  \GraphInit[vstyle=Empty]
  \SetGraphUnit{2}
  \Vertex[L=$\cdots$]{A}
  \EA[LabelOut, Lpos=90, L=$A$](A){B}
  \EA[LabelOut, Lpos=90, L=$0$](B){C}
  \EA[NoLabel](C){D}
  \AddVertexColor{black}{B,C}
  
  \Edges(A,B,C)
\end{tikzpicture}\]
in which we may blow up the internal edge
\[\begin{tikzpicture}[scale=0.5]
  \GraphInit[vstyle=Empty]
  \SetGraphUnit{2}
  \Vertex[L=$\cdots$]{A}
  \EA[LabelOut, Lpos=90, L=$A-1$](A){B}
  \EA[LabelOut, Lpos=90, L=$-1$](B){C}
  \EA[LabelOut, Lpos=90, L=$-1$](C){D}
  \AddVertexColor{black}{B,C,D}
  
  \Edges(A,B,C,D)
\end{tikzpicture}\]
and blow down to
\[\begin{tikzpicture}[scale=0.5]
  \GraphInit[vstyle=Empty]
  \SetGraphUnit{2}
  \Vertex[L=$\cdots$]{A}
  \EA[LabelOut, Lpos=90, L=$A-1$](A){B}
  \EA[LabelOut, Lpos=90, L=$0$](B){C}
  \AddVertexColor{black}{B,C}
  
  \Edges(A,B,C)
\end{tikzpicture}\]
or we may blow up the exterior
\[\begin{tikzpicture}[scale=0.5]
  \GraphInit[vstyle=Empty]
  \SetGraphUnit{2}
  \Vertex[L=$\cdots$]{A}
  \EA[LabelOut, Lpos=90, L=$A$](A){B}
  \EA[LabelOut, Lpos=90, L=$-1$](B){C}
  \EA[LabelOut, Lpos=90, L=$-1$](C){D}
  \AddVertexColor{black}{B,C,D}
  
  \Edges(A,B,C,D)
\end{tikzpicture}\]
and blow down to 
\[\begin{tikzpicture}[scale=0.5]
  \GraphInit[vstyle=Empty]
  \SetGraphUnit{2}
  \Vertex[L=$\cdots$]{A}
  \EA[LabelOut, Lpos=90, L=$A+1$](A){B}
  \EA[LabelOut, Lpos=90, L=$0$](B){C}
  \AddVertexColor{black}{B,C}
  
  \Edges(A,B,C)
\end{tikzpicture}.\]
This is referred to as \emph{transferring to/from nowhere} and will be denoted by
\[\begin{tikzpicture}[scale=0.5]
  \GraphInit[vstyle=Empty]
  \SetGraphUnit{2}

  \Vertex[L=$\cdots$]{A}
  \EA[LabelOut, Lpos=90, L=$A$](A){B}
  \EA[LabelOut, Lpos=90, L=$0$](B){C}
  \EA[NoLabel](C){D}

  \EA[NoLabel](E){F}
  \EA[NoLabel](F){G}

  \EA[L=$\cdots$](G){H}
  \EA[LabelOut, Lpos=90, L=$A - k$](H){I}
  \EA[LabelOut, Lpos=90, L=$0$](I){J}
  \EA[NoLabel](J){K}
  \AddVertexColor{black}{B,C,I,J}
  
  \Edges(A,B,C)
  \Edges(H,I,J)
  \Edge[style={->}](F)(G)
  \Edge[labelstyle={draw},label=$+k$,style={bend right,out=-90,in=-90, ->,color=white}](B)(D)
\end{tikzpicture}\]

\[\begin{tikzpicture}[scale=0.5]
  \GraphInit[vstyle=Empty]
  \SetGraphUnit{2}

  \Vertex[L=$\cdots$]{A}
  \EA[LabelOut, Lpos=90, L=$A$](A){B}
  \EA[LabelOut, Lpos=90, L=$0$](B){C}
  \EA[NoLabel](C){D}

  \EA[NoLabel](E){F}
  \EA[NoLabel](F){G}

  \EA[L=$\cdots$](G){H}
  \EA[LabelOut, Lpos=90, L=$A + k$](H){I}
  \EA[LabelOut, Lpos=90, L=$0$](I){J}
  \EA[NoLabel](J){K}
  \AddVertexColor{black}{B,C,I,J}
  
  \Edges(A,B,C)
  \Edges(H,I,J)
  \Edge[style={->}](F)(G)
  \Edge[labelstyle={draw},label=$+k$,style={bend right,out=-90,in=-90, <-,color=white}](B)(D)
\end{tikzpicture}.\]
\end{con}

\begin{con}[Sliding]
  Consider the following situation
  \[\begin{tikzpicture}[scale=0.5]
    \GraphInit[vstyle=Empty]
    \SetGraphUnit{2}
    \Vertex[L=$\cdots$]{A}
    \EA[LabelOut, Lpos=90, L=$A$](A){B}
    \EA[LabelOut, Lpos=90, L=$0$](B){C}
    \EA[LabelOut, Lpos=90, L=$0$](C){D}
    \EA[LabelOut, Lpos=90, L=$B$](D){E}
    \EA[L=$\cdots$](E){F}
    \AddVertexColor{black}{B,C,D,E}
    
    \Edges(A,B,C,D,E,F)
  
  \end{tikzpicture}.\]
  We can use the left $0$-curve in the $0$-$0$ configuration to transfer all of the $A$-curve over. We arrive at
  \[\begin{tikzpicture}[scale=0.5]
    \GraphInit[vstyle=Empty]
    \SetGraphUnit{2}
    \Vertex[L=$\cdots$]{A}
    \EA[LabelOut, Lpos=90, L=$0$](A){B}
    \EA[LabelOut, Lpos=90, L=$0$](B){C}
    \EA[LabelOut, Lpos=90, L=$A$](C){D}
    \EA[LabelOut, Lpos=90, L=$B$](D){E}
    \EA[L=$\cdots$](E){F}
    \AddVertexColor{black}{B,C,D,E}
    
    \Edges(A,B,C,D,E,F)

  \end{tikzpicture}.\]
Thus we have moved the $0$-$0$ configuration to the left. This is called \emph{sliding}. It's clear that we can slide in the other direction as well.
\end{con}

\begin{con}[Transfer to/from nowhere]
We start with 
    \[
    \begin{tikzpicture}[scale=0.5]
        \GraphInit[vstyle=Empty]
        \SetGraphUnit{2}
        \Vertex[L=$\ldots$]{A}
        \EA[LabelOut, Lpos=90, L=$A$](A){AA}
        \EA[LabelOut, Lpos=90, L=$1$](AA){B}
        \EA[LabelOut, Lpos=90, L=$B$](B){BB}
        \EA[L=$\ldots$](BB){C}
        \AddVertexColor{black}{AA,B,BB}
        \Edges(A,AA,B,BB,C)
        \end{tikzpicture}.
    \]
We blow up once on the left to get
    \[
    \begin{tikzpicture}[scale=0.5]
        \GraphInit[vstyle=Empty]
        \SetGraphUnit{2}
        \Vertex[L=$\ldots$]{A}
        \EA[LabelOut, Lpos=90, L=$A-1$](A){AA}
        \EA[LabelOut, Lpos=90, L=$-1$](AA){AAA}
        \EA[LabelOut, Lpos=90, L=$0$](AAA){B}
        \EA[LabelOut, Lpos=90, L=$B$](B){BB}
        \EA[L=$\ldots$](BB){C}
        \AddVertexColor{black}{AA,AAA,B,BB}
        \Edges(A,AA,AAA,B,BB,C)
        \end{tikzpicture}.
    \]
and once on the right to get
    \[
    \begin{tikzpicture}[scale=0.5]
        \GraphInit[vstyle=Empty]
        \SetGraphUnit{2}
        \Vertex[L=$\ldots$]{A}
        \EA[LabelOut, Lpos=90, L=$A-1$](A){AA}
        \EA[LabelOut, Lpos=90, L=$-1$](AA){AAA}
        \EA[LabelOut, Lpos=90, L=$-1$](AAA){B}
        \EA[LabelOut, Lpos=90, L=$-1$](B){BB}
        \EA[LabelOut, Lpos=90, L=$B-1$](BB){BBB}
        \EA[L=$\ldots$](BBB){C}
        \AddVertexColor{black}{AA,AAA,B,BB,BBB}
        \Edges(A,AA,AAA,B,BB,BBB,C)
        \end{tikzpicture}.
    \]
We then blow down the middle curve to arrive at
    \[
    \begin{tikzpicture}[scale=0.5]
        \GraphInit[vstyle=Empty]
        \SetGraphUnit{2}
        \Vertex[L=$\ldots$]{A}
        \EA[LabelOut, Lpos=90, L=$A-1$](A){AA}
        \EA[LabelOut, Lpos=90, L=$0$](AA){AAA}
        \EA[LabelOut, Lpos=90, L=$0$](AAA){B}
        \EA[LabelOut, Lpos=90, L=$B-1$](B){BB}
        \EA[L=$\ldots$](BB){C}
        \AddVertexColor{black}{AA,AAA,B,BB}
        \Edges(A,AA,AAA,B,BB,C)
        \end{tikzpicture}.
    \]
We refer to this process as \emph{chain replacement} and we denote it by 
    \[
    \begin{tikzpicture}[scale=0.5]
      \GraphInit[vstyle=Empty]
      \SetGraphUnit{2}
    
      \Vertex[L=$\cdots$]{A}
      \EA[LabelOut, Lpos=90, L=$A$](A){B}
      \EA[LabelOut, Lpos=90, L=$1$](B){C}
      \EA[LabelOut, Lpos=90, L=$B$](C){D}
      \EA[L=$\cdots$](D){DD}
    
      \EA[NoLabel](E){F}
      \EA[NoLabel](F){G}
    
      \EA[L=$\cdots$](G){H}E
      \EA[LabelOut, Lpos=90, L=$A-1$](H){I}
      \EA[LabelOut, Lpos=90, L=$0$](I){J}
      \EA[LabelOut, Lpos=90, L=$0$](J){K}
      \EA[LabelOut, Lpos=90, L=$B-1$](K){L}
      \EA[L=$\cdots$](L){M}
      \AddVertexColor{black}{B,C,D,I,J,K,L}
      
      \Edges(A,B,C,D,DD)
      \Edges(H,I,J,L,M)
      \Edge[style={->}](F)(G)
    \end{tikzpicture}
    \]
\end{con}
We will use these moves in order to perform a modified chain reduction procedure.

\subsection{The Modified Chain Reduction Lemma}
We may now prove the following key lemma:
\begin{lem}\label{lem:modified_reduction}
  Let $\Gamma$ be a concave divisor graph. Then we may perform a finite sequence of combinatorial ($-1$)-blow-ups and $(-1)$-blow-downs
  on $\Gamma$ to arrive at a graph $\Gamma^\xi$ whose maximal chains are all of the form 
  \[\begin{tikzpicture}[scale=0.5]
  \GraphInit[vstyle=Empty]
  \SetGraphUnit{2}

  \Vertex[LabelOut,Lpos=90,L=$\fbox{m}$]{A}
  \EA[LabelOut, Lpos=90, L=$-m_1'$](A){B}
  \EA[LabelOut, Lpos=90, L=$-m_2'$](B){C}
  \EA[L=$\cdots$](C){D}
  \EA[LabelOut, Lpos=90, L=$-m_{\ell - 1}'$](D){E}
  \EA[LabelOut, Lpos=90, L=$-m_\ell'$](E){F}
  \AddVertexColor{black}{A,B,C,E,F}
  
  \Edges(A,B,C,D,E,F)
\end{tikzpicture}\]
    Where the $m$-vertex represents a linear chain of $0$-curves of length $m \ge 0$ and $m_i \ge 2$. Such a chain is said to be in \ul{contact normal form}. We  call $\Gamma^\xi$ the \ul{contact chain reduction of $\Gamma$.}
\end{lem}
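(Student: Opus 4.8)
The plan is to mirror the chain-specific part of Neumann's normalization (the engine behind \Cref{lem:reduction}), replacing the one move there that fails to preserve the contact structure — the $+1$ blow-down — by the contact-preserving substitutes constructed above. Since blow-ups and blow-downs performed on the interior edges of a maximal chain $C$ alter neither the rest of $\Gamma$ nor the degrees of the two boundary vertices of $C$, it suffices to normalize each maximal chain independently and then reassemble. Throughout I would phrase the bookkeeping in terms of the Hirzebruch--Jung continued fraction $[m_1,\dots,m_\ell]$ of \Cref{lem:lens}, under which an interior vertex of weight $-m_i$ is ``already good'' exactly when $m_i \ge 2$, and the target is a block of $0$-curves followed by a chain of good vertices.

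The reduction itself is a case analysis on the interior weights that are not $\le -2$. An interior vertex of weight $-1$ (that is, $m_i = 1$) whose neighbors are both $\le -1$ is removed by an ordinary $(-1)$ blow-down, which strictly shortens the chain and creates no positive weight; this is the one move Neumann and we share verbatim, and on a chain with no $0$-curves and no positive weights it is precisely the classical Hirzebruch--Jung reduction to entries $\ge 2$. A vertex of weight $+1$ (that is, $m_i = -1$) is where Neumann would blow down a $+1$ vertex; in its place I apply \emph{chain replacement}, trading the $+1$ vertex for a $0$--$0$ pair while subtracting one from each neighbor. A vertex of weight $\ge 2$ is first driven down to weight $+1$ by interior $(-1)$ blow-ups against a neighbor, after which chain replacement applies. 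The net effect is that every offending positive weight is metabolized into $0$-curves while the weight-$(-1)$ vertices are blown down. I then use \emph{sliding} to gather the resulting $0$--$0$ configurations at the left end, \emph{$0$-curve transfer} to fuse adjacent $0$-blocks, and \emph{transfer to/from nowhere} at the terminal $0$-curve to discharge the accumulated slack, leaving a single block of $m \ge 0$ zero-curves followed by a tail whose entries are finally forced to satisfy $m_i \ge 2$ by $0$-curve transfers fed from that block.

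The principal obstacle is termination, and here the argument genuinely departs from Neumann's: in the topological calculus every normalizing move shortens the chain, so termination is automatic, whereas chain replacement \emph{lengthens} the chain by one. I would instead run the procedure against the lexicographic complexity consisting of the number of interior vertices of positive weight, followed by the tie-breaker $\sum_i \max(-m_i - 1, 0)$ measuring how far the positive weights exceed $+1$, followed by the length of the active (non-frozen) part of the chain. Chain replacement and the weight-$(-1)$ blow-downs strictly drop the first quantity, while the preliminary interior blow-ups used to bring a large positive weight down to $+1$ strictly drop the second without touching the first; the $0$-curves they produce are inert, being immediately slid off and frozen into the block of $m$ zero-curves, so no move in this phase can resurrect a positive weight.

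The delicate point I would need to verify with care is that the cleanup moves never reintroduce a positive weight. A $(-1)$ blow-down against a $0$-curve neighbor \emph{does} create a $+1$, so rather than blow such a vertex down I would neutralize it through the adjacent $0$-curve: a $0$-curve transfer carries a configuration $(\dots,A,0,-1,b,\dots)$ to $(\dots,A-1,0,0,b,\dots)$, converting the $-1$ into a $0$ at the cost only of making $A$ more negative, which is harmless. Thus the frozen $0$-block functions as a buffer that absorbs both the positive weights and the stray $-1$'s without ever producing a new positive weight, and the final tail adjustment terminates because each governing $0$-curve transfer moves one unit of weight monotonically toward the block. Granting these checks, the complexity strictly decreases at every step, the procedure halts, and the resulting chain is in contact normal form, yielding $\Gamma^\xi$.
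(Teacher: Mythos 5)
Your proposal is correct in outline and is, at its core, the paper's own proof: the same contact-preserving toolkit ($-1$ blow-up/blow-down, $0$-curve transfer, sliding, chain replacement) deployed in the same pattern --- eliminate $-1$-curves by blow-down, drive each positive weight $k$ down to $+1$ by $(k-1)$ interior blow-ups, chain-replace the resulting $+1$ into a $0$--$0$ pair while decrementing its neighbors, and slide the $0$'s into a block at one end of the chain, leaving a Hirzebruch--Jung tail with entries $\ge 2$ as in \Cref{lem:lens}. Where you genuinely depart is the treatment of termination. The paper organizes the procedure as a left-to-right sweep with a case analysis on the residual weight $\widetilde{k} \in \{\le -2,\ -1,\ 0,\ \ge 1\}$ and justifies halting only informally (``both steps either preserve or reduce the number of vertices''), whereas you run the procedure against an explicit lexicographic measure. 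This is a real improvement, and your discipline of never blowing down a $-1$ adjacent to a positive vertex --- routing every positive weight through chain replacement instead --- sidesteps a genuine wrinkle in the paper's own $\widetilde{k} = -1$ case, where cascading blow-downs through the block of $N$ curves of weight $-2$ produce the weight $\widehat{k} + (N+1)$, which can resurrect a positive vertex and would break monotonicity of your first coordinate if such blow-downs were permitted. Two points need repair in a full write-up. First, your claim that the weight-$(-1)$ blow-downs ``strictly drop the first quantity'' is misstated: with both neighbors $\le -1$ they leave the positive count and the excess sum unchanged and drop only the chain length, which still yields lexicographic descent but via the third coordinate. Second, the frozen $0$-block is not literally inert: the paper's step 4 must transfer weights across it into the rest of the graph, with a parity distinction on the number of leading $0$-curves (sliding moves $0$'s only in pairs, so a lone $0$ behaves differently), and these compound moves change the block's length; your measure survives because each such compound move strictly drops one of the first two coordinates, but the ``immediately slid off and frozen'' framing should be replaced by that observation.
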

\begin{proof}
  Let $C \subset \Gamma$ be a maximal chain. Fix some linear ordering on the components of $C$ so that $C = C(m_{1}, \ldots, m_{\ell})$. For exterior maximal chains, we always take $m_\ell$ to be the decoration on the exterior-most curve. We progress according to the following procedure:
  \begin{enumerate}
  \item Blow down as many $-1$-curves as possible from left to right (possibly in multiple stages). The result should be a maximal chain with no $-1$-curves. If the resulting chain is normalized, proceed to the next chain.
  
  \item For each configuration of the form
  \[\begin{tikzpicture}[scale=0.5]
    \GraphInit[vstyle=Empty]
    \SetGraphUnit{2}

    \Vertex[L=$\cdots$]{A}
    \EA[LabelOut, Lpos=90, L=$k_1$](A){B}
    \EA[LabelOut, Lpos=90, L=$0$](B){C}
    \EA[LabelOut, Lpos=90, L=$k_2$](C){D}
    \EA[L=$\cdots$](D){E}
    \AddVertexColor{black}{B,C,D}
    
    \Edges(A,B,C,D,E)
\end{tikzpicture},\]
transfer all of $k_1$ over the $0$-curve to the right
\[\begin{tikzpicture}[scale=0.5]
  \GraphInit[vstyle=Empty]
  \SetGraphUnit{2}

  \Vertex[L=$\cdots$]{A}
  \EA[LabelOut, Lpos=90, L=$k_1$](A){B}
  \EA[LabelOut, Lpos=90, L=$0$](B){C}
  \EA[LabelOut, Lpos=90, L=$k_2$](C){D}
  \EA[L=$\cdots$](D){E}

  \EA[NoLabel](E){F}
  \EA[NoLabel](F){G}

  \EA[L=$\cdots$](G){H}
  \EA[LabelOut, Lpos=90, L=$0$](H){I}
  \EA[LabelOut, Lpos=90, L=$0$](I){J}
  \EA[LabelOut, Lpos=90, L=$k_1 + k_2$](J){K}
  \EA[L=$\cdots$](K){L}
  \AddVertexColor{black}{B,C,D,I,J,K}
  
  \Edges(A,B,C,D,E)
  \Edges(H,I,J,K,L)
  \Edge[style={->}](F)(G)
  \Edge[labelstyle={draw},label=$+k_1$,style={bend right,out=-90,in=-90, ->,color=white}](B)(D)
\end{tikzpicture}.\]

  Now shift the 0-0 configuration all the way to the left. 

  \item Repeat steps (1) and (2), until the chain has no $(-1)$-curves and no $0$-curves except those on the left. Since both steps either preserve or reduce the number of vertices in the chain, this process will eventually halt.

  \item The chain is now of the form:
  \[\begin{tikzpicture}[scale=0.5]
  \GraphInit[vstyle=Empty]
  \SetGraphUnit{2}

  \Vertex[LabelOut,Lpos=90,L=$\fbox{m}$]{A}
  \EA[L=$\mathcal{C}_+$](A){B}
  \AddVertexColor{black}{A}
  
  \Edges(A,B)
\end{tikzpicture}\]
where the $\fbox{m}$-vertex is a linear chain 0-curves of length $m$ and $\mathcal{C}_+$ is a subchain without $-1$-curves or $0$-curves. Starting from the left, identify the first $k$-curve in the chain with $k > 0$. The chain thus may be decomposed as
\[\begin{tikzpicture}[scale=0.5]
    \GraphInit[vstyle=Empty]
    \SetGraphUnit{2}
    \Vertex[LabelOut,Lpos=90,L=$\fbox{m}$]{A}
    \EA[L=$\mathcal{C}_0$](A){B}
    \EA[LabelOut, Lpos=90, L=$k$](B){C}
    \EA[L=$\mathcal{C}_+$](C){D}
    \AddVertexColor{black}{A,C}
    
    \Edges(A,B,C,D)
  \end{tikzpicture}\]
  where $\mathcal{C}_0$ is a chain with all decorations $\le -2$ and $\mathcal{C}_+$ is the rest of the chain. 

  We could have $\mathcal{C}_0 = \emptyset$ and be in the situation:
  \[\begin{tikzpicture}[scale=0.5]
    \GraphInit[vstyle=Empty]
    \SetGraphUnit{2}
    \Vertex[L=$\ldots$]{A}
    \EA[LabelOut, Lpos=90, L=$0$](A){B}
    \EA[LabelOut, Lpos=90, L=$k$](B){C}
    \EA[L=$\cdots$](C){D}
    \AddVertexColor{black}{B,C}
    
    \Edges(A,B,C,D)
  \end{tikzpicture}.\]
    If the number of zeros on the left is odd, we simply use the chain of $0$-curves to the left to transfer $k$ into the rest of the graph. 
  This increases the number of 0-curves on the left by one.
  
  If the number of 0-curves on the left is even and non-zero, we may transfer $+2$ from 
  the $0$-curve left of the $k$-curve into the graph using the odd number of $0$-curves to its left. This reduces the scenario to the case $\mathcal{C}_0 \neq \emptyset$ and 
  decreases the number of $0$-curves on the left by one.

  In all other cases, we blow up the $k$-curve on the left $(k-1)$ times so that it becomes a zero curve, the chain is now of the form: 
    \[\begin{tikzpicture}[scale=0.5]
    \GraphInit[vstyle=Empty]
    \SetGraphUnit{2}
    \Vertex[LabelOut,Lpos=90,L=$\fbox{m}$]{A}
    \EA[L=$\mathcal{C}_0$](A){B}
    \EA[LabelOut, Lpos=90, L=$-2$](B){C}
    \EA[LabelOut, Lpos=90, L=$-2$](C){D}
    \EA[L=$\cdots$](D){E}
    \EA[LabelOut, Lpos=90, L=$-2$](E){F}
    \EA[LabelOut, Lpos=90, L=$-1$](F){G}
    \EA[LabelOut, Lpos=90, L=$1$](G){H}
    \EA[L=$\mathcal{C}_+$](H){I}
    \AddVertexColor{black}{A,C,D,F,G,H}
    
    \Edges(A,B,C,D,E,F,G,H,I)
  \end{tikzpicture}\]
  and this process has added $(k-1)$ vertices. We then perform a chain replacement and replace the $+1$-curve with a 0-0 configuration which adds an additional vertex. If $\mathcal{C}_+ \neq \emptyset$, then this leaves the chain in the form  
     \[\begin{tikzpicture}[scale=0.5]
    \GraphInit[vstyle=Empty]
    \SetGraphUnit{2}
    \Vertex[LabelOut,Lpos=90,L=$\fbox{m}$]{A}
    \EA[L=$\mathcal{C}_0$](A){B}
    \EA[LabelOut, Lpos=90, L=$-2$](B){C}
    \EA[LabelOut, Lpos=90, L=$-2$](C){D}
    \EA[L=$\cdots$](D){E}
    \EA[LabelOut, Lpos=90, L=$-2$](E){F}
    \EA[LabelOut, Lpos=90, L=$-2$](F){G}
    \EA[LabelOut, Lpos=90, L=$0$](G){H}
    \EA[LabelOut, Lpos=90, L=$0$](H){I}
    \EA[LabelOut, Lpos=90, L=$\widetilde{k}$](I){J}
    \EA[L=$\mathcal{C}_+$](J){K}
    \AddVertexColor{black}{A,C,D,F,G,H,I,J}
    
    \Edges(A,B,C,D,E,F,G,H,I,J,K)
  \end{tikzpicture}\]
  where we have 
  \[\widetilde{k} = \begin{cases}
   \le -2 &\\
    -1&\\
    0 &\\
    \ge 1&\\
    \end{cases}.\]
    We may slide the 0-0 configuration so that it is of the form
   
     \[\begin{tikzpicture}[scale=0.5]
    \GraphInit[vstyle=Empty]
    \SetGraphUnit{2}
    \Vertex[LabelOut,Lpos=90,L=$\fbox{m + 2}$]{A}
    \EA[L=$\mathcal{C}_0$](A){B}
    \EA[LabelOut, Lpos=90, L=$-2$](B){C}
    \EA[LabelOut, Lpos=90, L=$-2$](C){D}
    \EA[L=$\cdots$](D){E}
    \EA[LabelOut, Lpos=90, L=$-2$](E){F}
    \EA[LabelOut, Lpos=90, L=$-2$](F){G}
    \EA[LabelOut, Lpos=90, L=$\widetilde{k}$](G){H}
    \EA[L=$\mathcal{C}_+$](H){I}
    \AddVertexColor{black}{A,C,D,F,G,H}
    
    \Edges(A,B,C,D,E,F,G,H,I)
  \end{tikzpicture}\]
    If $\mathcal{C}_+ = \emptyset$, then we proceed to the next chain after sliding. It will be important to notate the chain of $-2$'s for the next step. We will represent our chain by 
    \[\begin{tikzpicture}[scale=0.5]
    \GraphInit[vstyle=Empty]
    \SetGraphUnit{2}
    \Vertex[LabelOut,Lpos=90,L=$\fbox{m+2}$]{A}
    \EA[L=$\mathcal{C}_0$](A){B}
    \EA[NoLabel](B){BB}
    \EA[L=\fbox{$(k-1)$}](BB){C}
    \EA[NoLabel](C){CC}
    \EA[LabelOut, Lpos=90, L=$\widetilde{k}$](CC){D}
    \EA[L=$\mathcal{C}_+$](D){E}
    \AddVertexColor{black}{A,D}
    
    \Edges(A,B,C,D,E)
    \end{tikzpicture}.\]
    By considering the $-2$-curves at the tail end of $\mathcal{C}_0$, we may further write
    \[\begin{tikzpicture}[scale=0.5]
    \GraphInit[vstyle=Empty]
    \SetGraphUnit{2}
    \Vertex[LabelOut,Lpos=90,L=$\fbox{m+2}$]{A}
    \EA[L=$\mathcal{C}_-$](A){B}
    \EA[NoLabel](B){BB}
    \EA[L=\fbox{$N$}](BB){C}
    \EA[NoLabel](C){CC}
    \EA[LabelOut, Lpos=90, L=$\widetilde{k}$](CC){D}
    \EA[L=$\mathcal{C}_+$](D){E}
    \AddVertexColor{black}{A,D}
    
    \Edges(A,B,C,D,E)
    \end{tikzpicture}\]
    where $N$ is a maximal subchain of $-2$-curves and $C_-$ is a subchain with only negative vertices and no $-1$-curves.  
  \item

If the algorithm did not terminate at the previous step, then we are in the following situation:
    \[\begin{tikzpicture}[scale=0.5]
    \GraphInit[vstyle=Empty]
    \SetGraphUnit{2}
    \Vertex[LabelOut,Lpos=90,L=$\fbox{M}$]{A}
    \EA[L=$\mathcal{C}_-$](A){B}
    \EA[NoLabel](B){BB}
    \EA[L=\fbox{$N$}](BB){C}
    \EA[NoLabel](C){CC}
    \EA[LabelOut, Lpos=90, L=$\widetilde{k}$](CC){D}
    \EA[L=$\mathcal{C}_+$](D){E}
    \AddVertexColor{black}{A,D}
    
    \Edges(A,B,C,D,E)
    \end{tikzpicture}\]
We consider all cases for the value of $\widetilde{k}$ separately:
  \begin{itemize}
      \item $\widetilde{k} \le -2$ or $\widetilde{k} \ge 1$: For these, we continue to step through the chain or repeat the process outlined in step 4 if necessary.
      \item $\widetilde{k} = 0$: We proceed as in step 2. At the end of this process, the chain is of the form:
          \[\begin{tikzpicture}[scale=0.5]
    \GraphInit[vstyle=Empty]
    \SetGraphUnit{2}
    \Vertex[LabelOut,Lpos=90,L=$\fbox{M + 2}$]{A}
    \EA[L=$\mathcal{C}_-$](A){B}
    \EA[NoLabel](B){BB}
    \EA[L=\fbox{$N-1$}](BB){C}
    \EA[NoLabel](C){CC}
    \EA[L=$\mathcal{C}_+$](CC){D}
    \AddVertexColor{black}{A}
    
    \Edges(A,B,C,D)
    \end{tikzpicture}\]
  where $\mathcal{C}_+$ is the remainder of the chain. If $\mathcal{C}_+$ is not empty, then the situation is like
  \[\begin{tikzpicture}[scale=0.5]
    \GraphInit[vstyle=Empty]
    \SetGraphUnit{2}
    \Vertex[LabelOut,Lpos=90,L=$\fbox{M+2}$]{A}
    \EA[L=$\mathcal{C}_-$](A){B}
    \EA[NoLabel](B){BB}
    \EA[L=\fbox{$N-1$}](BB){C}
    \EA[NoLabel](C){CC}
    \EA[LabelOut, Lpos=90, L=$\widehat{k}$](CC){D}
    \EA[L=$\mathcal{C}_+$](D){E}
    \AddVertexColor{black}{A,D}
    
    \Edges(A,B,C,D,E)
    \end{tikzpicture}\]
    so we may repeat this step for the new value $\widehat{k}$.

    \item $\widetilde{k} = -1:$ We are in the situation
        \[\begin{tikzpicture}[scale=0.5]
    \GraphInit[vstyle=Empty]
    \SetGraphUnit{2}
    \Vertex[LabelOut,Lpos=90,L=$\fbox{M}$]{A}
    \EA[L=$\mathcal{C}_-$](A){B}
    \EA[NoLabel](B){BB}
    \EA[L=\fbox{$N$}](BB){C}
    \EA[NoLabel](C){CC}
    \EA[LabelOut, Lpos=90, L=$-1$](CC){D}
    \EA[L=$\mathcal{C}_+$](D){E}
    \AddVertexColor{black}{A,D}
    
    \Edges(A,B,C,D,E)
    \end{tikzpicture}.\]
    If $\mathcal{C}_+ \neq \emptyset$, then we're actually in the situation
    \[\begin{tikzpicture}[scale=0.5]
    \GraphInit[vstyle=Empty]
    \SetGraphUnit{2}
    \Vertex[LabelOut,Lpos=90,L=$\fbox{M}$]{A}
    \EA[L=$\mathcal{C}_-$](A){B}
    \EA[NoLabel](B){BB}
    \EA[L=\fbox{$N$}](BB){C}
    \EA[NoLabel](C){CC}
    \EA[LabelOut, Lpos=90, L=$-1$](CC){D}
    \EA[LabelOut, Lpos=90, L=$\widehat{k}$](D){DD}
    \EA[L=$\mathcal{C}_+$](DD){E}
    \AddVertexColor{black}{A,D,DD}
    
    \Edges(A,B,C,D,E)
    \end{tikzpicture}.\]
    In this case (and even in the case when $\mathcal{C}_+ = \emptyset$) we blow down the $-1$-curve. Because of the chain of $-2$'s to the left of the $-1$-curve, we may continue to blow down until the chain is of the form
        \[\begin{tikzpicture}[scale=0.5]
    \GraphInit[vstyle=Empty]
    \SetGraphUnit{2}
    \Vertex[LabelOut,Lpos=90,L=$\fbox{M}$]{A}
    \EA[L=$\mathcal{C}_-$](A){B}
    \EA[LabelOut, Lpos=90, L=$-1$](B){D}
    \EA[NoLabel](D){DD}
    \EA[LabelOut, Lpos=90, L=$(\widehat{k} + N)$](DD){DDD}
    \EA[L=$\mathcal{C}_+$](DD){E}
    \AddVertexColor{black}{A,D,DDD}
    
    \Edges(A,B,C,D,E)
    \end{tikzpicture}.\]
    Blowing down the remaining -1 curve will affect the rightmost vertex in the subchain $\mathcal{C}_-$. At worst, this vertex is a $-3$-curve so that blowing down leave a trailing $-2$-curve. So, at worst, the situation is 
            \[\begin{tikzpicture}[scale=0.5]
    \GraphInit[vstyle=Empty]
    \SetGraphUnit{2}
    \Vertex[LabelOut,Lpos=90,L=$\fbox{M}$]{A}
    \EA[L=$\mathcal{C}_-$](A){B}
    \EA[NoLabel](B){BB}
    \EA[L=\fbox{$1$}](BB){C}
    \EA[NoLabel](C){CC}
    \EA[LabelOut, Lpos=90, L=$\widehat{k} + (N+1)$](CC){D}
    \EA[L=$\mathcal{C}_+$](D){E}
    \AddVertexColor{black}{A,D}
    
    \Edges(A,B,C,D,E)
    \end{tikzpicture}.\]
    We continue to step through the chain, starting with the $\widehat{k} + (N+1)$ curve.   
  \end{itemize}

    We continue to repeat the present step until the maximal chain is in contact normal form.

  \[\begin{tikzpicture}[scale=0.5]
    \GraphInit[vstyle=Empty]
    \SetGraphUnit{2}

    \Vertex[LabelOut,Lpos=90,L=$-m_1'$]{A}
    \EA[LabelOut, Lpos=90, L=$-m_2'$](A){B}
    \EA[LabelOut, Lpos=90, L=$-m_3'$](B){C}
    \EA[L=$\cdots$](C){D}
    \EA[LabelOut, Lpos=90, L=$-m_{\ell - 1}'$](D){E}
    \EA[LabelOut, Lpos=90, L=$-m_{\ell}'$](E){F}
    \AddVertexColor{black}{A,B,C,E,F}
    
    \Edges(A,B,C,D,E,F)
\end{tikzpicture}\]
or
\[\begin{tikzpicture}[scale=0.5]
  \GraphInit[vstyle=Empty]
  \SetGraphUnit{2}

  \Vertex[LabelOut,Lpos=90,L=$\fbox{m}$]{A}
  \EA[LabelOut, Lpos=90, L=$-m_1'$](A){B}
  \EA[LabelOut, Lpos=90, L=$-m_2'$](B){C}
  \EA[L=$\cdots$](C){D}
  \EA[LabelOut, Lpos=90, L=$-m_{\ell - 1}'$](D){E}
  \EA[LabelOut, Lpos=90, L=$-m_\ell'$](E){F}
  \AddVertexColor{black}{A,B,C,E,F}
  
  \Edges(A,B,C,D,E,F)
\end{tikzpicture}\]
where the $\fbox{m}$-vertex is a linear chain of $m$ 0-curves and $m_i \ge 2$.
  \end{enumerate}
  We then repeat the entire procedure for all other chains until all chains are in contact normal form.
\end{proof}

\subsection{Contact normal form plumbing graphs}

The modified chain reduction lemma shows that we can almost fully normalize a chain in the contact setting using only blow-ups and blow-downs. The only difference between a fully normalized chain and a chain in contact normal form is the admissibility of any number of leading 0-curves. This difference, as well as the issues involving Klein bottle pieces discussed earlier, are the only obstructions to carrying out Neumann's reduction procedure in full. 
We make the following definition:
\begin{defn}
    We say that a divisor graph $\Gamma$ is in \emph{contact normal form} if it is in TPC normal form except that:
    \begin{itemize}
        \item $\Gamma$ may contain configurations of the form \[
      \begin{tikzpicture}[scale=0.5]
        \GraphInit[vstyle=Empty]
        \SetGraphUnit{2}
        \Vertex[L=$\ldots$]{A}
        \EA[LabelOut, Lpos=90, L={$\delta$}](A){B}
  
        \NOEA[LabelOut, Lpos=90, L={$2\delta_1$}](B){C}
        \SOEA[LabelOut, Lpos=90, L={$2\delta_2$}](B){D}

        \Edges(A,B,C)
        \Edges(B,D)
  
        \AddVertexColor{black}{B,C,D}
      \end{tikzpicture}
    \]
    where $\delta_i = \pm 1$, $\delta = \frac{\delta_1 + \delta_2}{2}$.
        \item A chain $C\subset \Gamma$ may contain any number of 
        leading 0-curves. 
    \end{itemize}
\end{defn}

With this definition and the discussion in the previous section, we have shown
\begin{prop}
    The contact chain reduction $\Gamma^\xi$ of a plumbing graph $\Gamma$ is in contact normal form. 
\end{prop}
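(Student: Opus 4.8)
The plan is to verify directly, clause by clause, that the reduced graph $\Gamma^\xi$ satisfies the definition of contact normal form, using that this definition is nothing more than TPC normal form with two explicit concessions: leading $0$-curves in chains, and the un-absorbed $\RP^2$-absorption configurations. Concretely, I would run Neumann's reduction procedure of \Cref{lem:reduction} but make the two substitutions forced by the analysis of \Cref{sec:tpc_moves}: wherever Neumann normalizes a chain with $+1$ blow-downs I insert the contact chain reduction of \Cref{lem:modified_reduction}, and I never perform an $\RP^2$-absorption. The chain clause of the target definition is then immediate, since by \Cref{lem:modified_reduction} the procedure halts only once every maximal chain has all weights $\le -2$ apart from a (possibly empty) block of leading $0$-curves --- exactly condition (2) of TPC normal form together with the second sanctioned exception.

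For the remaining conditions (1), (3), and (4) the key observation is that, beyond chain normalization, every move Neumann uses to reach normal form is either an $\RP^2$-absorption or is realized by $-1$ blow-ups and blow-downs, and the latter are contact-preserving by \Cref{sec:tpc_moves}. Thus all of Neumann's cleanup steps that are not $\RP^2$-absorptions can be executed verbatim, enforcing conditions (1), (3), (4) exactly as in the topological reduction. The configurations Neumann would have removed by $\RP^2$-absorption are precisely the $\delta$-vertex carrying two $2\delta_i$-leaves with $\delta_i = \pm 1$; in particular, the $(-1,-2,-2)$ branch forbidden by condition (3) is the $\delta_1=\delta_2=-1$ instance. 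Since $\RP^2$-absorption is not contact-preserving, all of these survive, and they are exactly the first permitted exception in the definition of contact normal form. The one genuinely circular configuration ruled out by condition (4), an all-$(-2)$ cycle, is a circular spherical divisor graph; these are classified separately in \cite{LM:local} and lie outside our setting.

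The step I expect to be the main obstacle is establishing \emph{completeness}: that tolerating leading $0$-curves and omitting $\RP^2$-absorptions does not leave some other contact-preserving ($-1$) move still applicable, which would violate condition (1). I would settle this with a finite case analysis of the local pictures possible at a branch vertex and at the junction between a leading $0$-block and the body of a chain. In each case one checks that any $-1$-curve admitting an SNC-preserving blow-down has already been consumed by \Cref{lem:modified_reduction}, that a surviving $0$-$0$ piece can only be slid rather than eliminated by a contact-preserving move, and that each residual $(-1,-2,-2)$ branch is an $\RP^2$-absorption pre-image (carrying, as noted in \Cref{sec:klein}, a neighborhood of a Klein bottle). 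Once this bookkeeping is complete, no contact-preserving operation remains except at the two sanctioned configurations, so $\Gamma^\xi$ satisfies every clause of the definition and is in contact normal form.
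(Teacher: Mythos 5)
Your proposal is correct and takes essentially the same route as the paper: the paper records no separate argument for this proposition, asserting it ``with this definition and the discussion in the previous section,'' which is precisely your combination of the modified chain reduction lemma (\Cref{lem:modified_reduction}) for normalizing chains with the analysis of \Cref{sec:tpc_moves} showing the $-1$ blow-up/blow-down is the only applicable contact-preserving TPC move, the two sanctioned exceptions in the definition of contact normal form absorbing exactly what $\RP^2$-absorption and the $0$-curve moves would otherwise have eliminated. Your clause-by-clause verification and the completeness check for condition (1) simply make explicit the bookkeeping the paper leaves implicit (the definition was tailored so that the proposition holds by construction), so nothing in your plan diverges from the intended proof.
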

We refer to $\Gamma^\xi$ as the \emph{contact reduction} of the graph $\Gamma$.
\begin{cor}
Any plumbing graph $\Gamma$ can be reduced to a graph $\Gamma^{\xi}$ in contact normal form.
\end{cor}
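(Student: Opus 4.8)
The plan is to obtain this corollary by assembling Lemma~\ref{lem:modified_reduction} with the Proposition immediately preceding it, together with the census of surviving moves from \Cref{sec:tpc_moves}. First I would apply the Modified Chain Reduction Lemma to each maximal chain of $\Gamma$ in turn. That lemma supplies, for a single chain, a finite sequence of contact-preserving $(-1)$-blow-ups and blow-downs—together with the derived $0$-curve transfer, sliding, and chain replacement constructions, each of which is itself a composition of such blow-ups and blow-downs—terminating in a chain in contact normal form. Carrying this out chain by chain produces the candidate graph $\Gamma^\xi$, and because every move is contact-preserving (the $(-1)$-blow-up/blow-down being the only applicable Neumann move that respects the boundary contact structure), the contactomorphism type of $(Y_\Gamma,\xi)$ is unchanged throughout.

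Next I would confirm that once every maximal chain sits in contact normal form, the whole graph $\Gamma^\xi$ satisfies the definition of contact normal form. Here I would run down the clauses of TPC normal form against the two permitted deviations. The condition that all chain weights be $\le -2$ and the prohibition on the $(-1)$-$(-2)$-$(-2)$ strings are exactly what contact chain normal form enforces on each chain; the exclusion of a cyclic all-$(-2)$ component is the case of a circular spherical divisor graph, classified in \cite{LM:local} and set aside in our setting; and the requirement that no further move apply collapses, after discarding in \Cref{sec:tpc_moves} every move that is either inapplicable to $SNC^+$ divisors (loops, edge signs, non-orientable bases, boundary, splittings, Seifert exchanges) or contact-violating (the $+1$ blow-down), to the assertion that no contact-preserving blow-down remains—precisely the output condition of the lemma. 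The leading $0$-curves left on chains and the unremovable $\RP^2$-absorption (Klein-bottle) configurations are built into the definition of contact normal form as allowed exceptions, so their presence is no obstruction. This is what the preceding Proposition records, and I would simply cite it to close this step.

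The hard part will be the bookkeeping that makes the chain-by-chain reduction into a genuinely \emph{global} normalization rather than a merely local one. The key observation I would spell out carefully is that every move in the lemma is supported on a chain together with its adjacent branch vertices of degree $\ge 3$: a blow-up inserts a $(-1)$-vertex along an interior chain edge, and a blow-down merges a vertex's two chain neighbors while lowering their weights—in neither case is the degree of an adjacent branch vertex changed, nor are the edges or weights of any other chain disturbed. Consequently the maximal chains may be processed independently, and since the lemma guarantees termination for each individual chain while $\Gamma$ has only finitely many maximal chains, the combined procedure terminates and outputs a graph all of whose maximal chains are simultaneously in contact normal form. This locality-and-termination argument is the one point I would treat with genuine care; the remainder of the corollary is immediate packaging of Lemma~\ref{lem:modified_reduction} and the Proposition.
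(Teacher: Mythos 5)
Your overall assembly is the same as the paper's: the corollary is recorded there as an immediate consequence of \Cref{lem:modified_reduction} together with the preceding proposition, exactly the packaging you describe in your first two paragraphs, and that part is correct. The issue is your third paragraph, where you make the locality claim load-bearing: the assertion that no blow-up or blow-down ever changes the degree of an adjacent branch vertex, so that ``the maximal chains may be processed independently,'' is false in an edge case. If a maximal chain consists of a single $(-1)$-vertex of degree one attached to a branch vertex $w$ of degree $\ge 3$, then blowing it down (an \emph{exterior} blow-down) deletes that edge, dropping the degree of $w$ by one and raising its weight by one; if $w$ had degree $3$ it now has degree $2$ and is absorbed into a new, longer maximal chain assembled from the formerly separate chains through $w$. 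Moreover, even an interior blow-down at a chain--branch junction changes the branch vertex's weight (and, a small slip, a $(-1)$-blow-down \emph{raises} the neighbors' weights from $k_i-1$ to $k_i$; it is the blow-up that lowers them). So the chains genuinely interact with their attaching vertices and with each other, and strict chain-by-chain independence does not hold.

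This flaw does not sink the corollary, because the independence argument you labeled ``the hard part'' is not actually needed: \Cref{lem:modified_reduction} is already stated and proved globally --- its procedure acts on all of $\Gamma$ and explicitly repeats (``for all other chains until all chains are in contact normal form''), so termination of the full normalization, including any re-formation of chains caused by blow-downs near branch vertices, is absorbed into the lemma rather than into the corollary. The honest fix to your write-up is either to delete the independence claim and cite the lemma's global statement directly, as the paper does, or to replace it with a correct termination argument that accounts for chain mergers at branch vertices (e.g., tracking a suitable complexity that decreases under exterior blow-downs), which is strictly more than the corollary requires.
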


The two distinguising features of a plumbing graph in contact normal form and a plumbing graph in topological normal form are:
\begin{enumerate}[label=(\roman*)]
    \item The plumbing graph may have Klein bottle pieces.
    \item Any chains may contain any number of leading 0-curves.
\end{enumerate}
The first of these conditions has topological implications and the second condition has symplectic and contact topological implications. In the next section, we will analyze the topological ramifications of having an embedded Klein bottle in a divisor boundary. The existence of leading zero chains is closely related to the concept of Giroux torsion from contact geometry. We hope to explicate the details of this in later work. For our applications, it suffices for us to understand the second condition in the context of our particular example (see \Cref{sec:example}). We start with the first condition.

\subsection{Klein bottle pieces}\label{sec:klein}
We turn to the case of plumbing graphs which contain sub-graphs of the form 
\[
    \begin{tikzpicture}[scale=0.5]
        \GraphInit[vstyle=Empty]
        \SetGraphUnit{2}
        \Vertex[L=$\ldots$]{A}
        \EA[LabelOut, Lpos=90, L={$e$}](A){B}
  
        \NOEA[LabelOut, Lpos=90, L={$2\delta_1$}](B){C}
        \SOEA[LabelOut, Lpos=90, L={$2\delta_2$}](B){D}

        \Edges(A,B,C)
        \Edges(B,D)
  
        \AddVertexColor{black}{B,C,D}
      \end{tikzpicture}
\]
with $\delta_i = \pm 1$. If we cut $Y_\Gamma$ along the plumbing torus corresponding to the leftmost edge of this sub-graph, we split $Y_\Gamma$ into two pieces:
\begin{itemize}
    \item A piece $Y^*_{\Gamma}$ corresponding to the complement of the Klein bottle piece (i.e. the rest of the graph)
    \item A piece $Y_K^*$ which is diffeomorphic to an orientable tubular neighborhood of a Klein bottle $K \subset Y_K^* \subset Y_\Gamma$.
\end{itemize}

To see that $Y_K^*$ indeed has this topology, we observe from the corresponding portion of the plumbing graph that $Y_K^*$ is a Seifert-fibered space over a disk with two singular fibers with rational surgery coefficients $\pm \frac{1}{2}$. This gives a Seifert-fibered presentation of the twisted $I$-bundle over the Klein bottle (which itself is the twisted $S^1$-bundle over $S^1$). (cf. \cite{Hatcher:3manifolds}, Chapter 2.1).

The boundary $\partial Y_K^*$ is diffeomorphic to a torus. We say that $K$ is \emph{virtually compressible} if $\partial Y_K^*$ is compressible, otherwise we say that $K$ is \emph{virtually incompressible} or \emph{virtually essential}. For prime 3-manifolds, admitting an embedded virtually compressible Klein bottle puts severe restrictions on their topology. 

\begin{prop}\label{prop:compressible_klein1}
Suppose that $Y$ is a prime 3-manifold which contains an embedded Klein bottle. If this Klein bottle is virtually compressible, then $Y$ may be given the structure of a Seifert-fibered space over $\RP^2$ with at most one singular fiber.
\end{prop}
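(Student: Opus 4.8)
The plan is to show that virtual compressibility forces $Y$ to be a Dehn filling of the twisted $I$-bundle over $K$, and then to propagate a Seifert fibration across the filling torus. Write $N = N(K)$ for the orientable twisted $I$-bundle neighborhood of $K$, so that $T = \partial N$ is a torus and $Y = N \cup_T Y'$ with $Y' = \overline{Y \setminus N}$. Since $N$ deformation retracts to $K$, it is irreducible and aspherical with $\pi_1(N) \cong \pi_1(K)$ the infinite, non-abelian Klein bottle group, and the boundary torus $T$ is incompressible in $N$ (the inclusion is $\pi_1$-injective; see \cite{Hatcher:3manifolds}). Hence any compressing disk for $T$ in $Y$ must lie on the $Y'$ side. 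First I would reduce to the case that $Y$ is irreducible: the only orientable prime $3$-manifold that fails to be irreducible is $S^1 \times S^2$, and $N$ does not embed there, since an embedded $N \subset S^1 \times S^2$ would (as $T$ is incompressible in $N$) be contained in a ball, forcing $\pi_1(N)$ to be finite and contradicting that it is the Klein bottle group.

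Next I would show that $Y'$ is a solid torus. Choosing a compressing disk $\Delta \subset Y'$ for $T$ and surgering $T$ along $\Delta$ produces a $2$-sphere $S \subset Y$, which bounds a ball $B \subset Y$ by irreducibility. Because $\pi_1(N)$ is infinite, $N$ cannot sit inside a ball, so $B$ lies on the $Y'$ side; reversing the surgery then exhibits $Y' = B \cup (\text{$1$-handle})$ as a solid torus $V$. Thus $Y = N \cup_T V$ is obtained from $N$ by a single Dehn filling along the slope $\gamma = \partial(\text{meridian disk of } V)$.

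The heart of the argument is to produce the Seifert fibration. The manifold $N$ carries two Seifert fibrations (see \cite{Hatcher:3manifolds}): one over the Möbius band $\mathrm{Mo}$ with no exceptional fibers, whose regular fiber on $T$ has some slope $f$, and one over the disk with two cone points of order two, $D^2(2,2)$, whose regular fiber has a distinct slope $h$. Since $\gamma$ can coincide with at most one of $f$ and $h$, at least one fibration is transverse to $\gamma$ and so extends across $V$. If the Möbius fibration extends, capping $\mathrm{Mo}$ by the disk dual to $V$ yields a Seifert fibration of $Y$ over $\mathrm{Mo} \cup_\partial D^2 = \RP^2$, with the core of $V$ contributing a single exceptional fiber (of multiplicity $\Delta(\gamma,f)$, and none when this equals $1$); this is exactly the desired conclusion. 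If instead only the $D^2(2,2)$ fibration extends, then $Y$ is Seifert fibered over $S^2(2,2)$, and since $Y$ is prime it is a prism manifold; I would then invoke the classical fact that prism manifolds admit a second Seifert fibration over $\RP^2$ with exactly one exceptional fiber, again yielding the claim.

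The main obstacle is the slope bookkeeping in this last paragraph: one must verify that the fiber slopes $f$ and $h$ are genuinely distinct, determine which filling slopes extend which fibration, and — crucially — confirm that the single degenerate filling producing the reducible manifold $\RP^3 \# \RP^3$ is precisely the one excluded by the hypothesis that $Y$ is prime. I would carry this out using the standard parametrization of Dehn fillings of the twisted $I$-bundle over the Klein bottle, tracking the effect of the filling on the base orbifold and matching the $S^2(2,2)$ and $\RP^2$ descriptions via the classification of Seifert fibered spaces. Once the reducible slope is isolated and discarded, every remaining filling is a prism manifold or a circle bundle over $\RP^2$, each of which is Seifert fibered over $\RP^2$ with at most one exceptional fiber.
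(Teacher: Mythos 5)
Your overall route is the same as the paper's: cut along the torus $T = \partial N$, use virtual compressibility together with primeness to surger $T$ to a separating sphere bounding a ball, conclude that the complement of $N$ is a solid torus $V$, and then extend a Seifert fibration of $N$ across the resulting Dehn filling. Up to the solid-torus step your argument matches the paper's (your detour through $S^1 \times S^2$ is not needed for that step: in a prime manifold every \emph{separating} sphere bounds a ball, which is all the paper uses). The divergence is at the final step, where the paper disposes of the fibration-extension by citing \cite{Hatcher:3manifolds}, while you attempt the slope bookkeeping explicitly --- and there your proposal contains two genuine errors.

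First, your reduction to the irreducible case is unfounded: from the compressibility of $T$ in $S^1 \times S^2$ you cannot conclude that $N$ lies in a ball --- the ball produced by surgering $T$ lies on the complementary side of $T$, not the $N$ side. In fact $N$ \emph{does} embed in $S^1 \times S^2$: filling $N$ along the fiber slope $f$ of the M\"{o}bius fibration kills $b$ in $\pi_1(N) = \langle a,b \mid abab^{-1}\rangle$ and produces a closed orientable manifold with fundamental group $\ZZ$, i.e.\ $S^1 \times S^2$. Second --- and this is the step that would fail --- in the case $\gamma = f$ you assert that primeness forces $Y$ to be a prism manifold. It does not: in the basis of $H_1(T)$ given by $a^2$ and $b$ one has $h = a^2$ and $f = b$, so $\Delta(f,h) = 1$, and the extended $D^2(2,2)$ fibration acquires no third exceptional fiber; a Seifert fibration over $S^2$ with two order-two exceptional fibers is a lens space or $S^1 \times S^2$, never a prism manifold, and the $\pi_1$ computation above shows the $\gamma = f$ filling is exactly $S^1 \times S^2 = M(0;1/2,-1/2)$. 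Your proposed repair --- isolating ``the reducible filling $\RP^3 \# \RP^3$'' and discarding it by primeness --- also misfires on both counts: $\RP^3 \# \RP^3$ arises from the slope $\gamma = h$, and it is Seifert fibered over $\RP^2$ with \emph{no} singular fibers (it is the classical reducible Seifert space), so that slope causes no trouble for the conclusion in the first place; the genuinely bad slope is $\gamma = f$, whose filling $S^1 \times S^2$ is prime yet admits no Seifert fibration over $\RP^2$ with at most one singular fiber (any filling $N(\gamma)$ carrying such a fibration has $H_1$ finite, of order $4\,\Delta(\gamma,f)$, whereas $H_1(S^1\times S^2) = \ZZ$). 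So your argument, carried out correctly, establishes the proposition only for $Y \neq S^1 \times S^2$, e.g.\ under an irreducibility hypothesis; it is worth noting that this exceptional case is equally elided by the paper's own one-line appeal to \cite{Hatcher:3manifolds}, so your finer analysis, once corrected, would actually sharpen the statement rather than merely reprove it.
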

\begin{proof}
As above, every Klein bottle piece yields an associated 3-manifold $Y_K^*$ diffeomorphic to an orientable tubular neighborhood of an embedded Klein bottle with $\partial Y_K^*$ diffeomorphic to a torus.

Let $\Delta \subset Y - Y_K^*$ be a compressing disk for $\partial Y_K^*$. The union $\partial Y_K^* \cup \Delta$ is a torus with a meridional disk attached. It follows that the boundary of a smoothing $Y_K$ of the union of $Y_K^*$ and a neighborhood of $\Delta$ is diffeomorphic to a sphere. This allows us to decompose $Y_\Gamma$ as a connected sum with $Y_K \cup B^3$ appearing as a (non-trivial) summand. Since $Y_\Gamma$ was assumed to be prime, we know that the complement $Y - Y_K$ is diffeomorphic to a 3-ball.

It follows that $Y - Y_K^*$ is diffeomorphic to a solid torus and so $Y$ is diffeomorphic to a solid torus glued to an orientable tubular neighborhood of a Klein bottle along some diffeomorphism of their torus boundaries. Since $Y_K^*$ has the structure of an $S^1$-bundle over the M\"{o}bius strip, it follows that $Y$ may be given the structure of a Seifert fibration over $\RP^2$ with at most one singular fiber (c.f. \cite{Hatcher:3manifolds}, Theorem 2.3(d)).
\end{proof}

\begin{cor}\label{cor:essential}
    If $Y_{\Gamma}$ is a prime plumbing boundary which is not Seifert-fibered, then every Klein bottle piece in $Y_{\Gamma}$ contains a virtually essential Klein bottle.
\end{cor}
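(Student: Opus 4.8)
The plan is to deduce this statement directly from \Cref{prop:compressible_klein1} by contraposition, since the corollary is essentially the logical inverse of that proposition applied separately to each Klein bottle piece.

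First I would fix an arbitrary Klein bottle piece of $Y_\Gamma$ and let $K \subset Y_K^* \subset Y_\Gamma$ be the associated embedded Klein bottle together with its orientable tubular neighborhood, as produced in the discussion preceding \Cref{prop:compressible_klein1}. The point worth emphasizing is that $K$ is genuinely embedded in the ambient manifold $Y_\Gamma$, and not merely in the abstract piece, so that the hypotheses of the proposition are literally available. I would then argue by contradiction: suppose $K$ is \emph{not} virtually essential, i.e. $K$ is virtually compressible, which by definition means that $\partial Y_K^*$ admits a compressing disk lying in the complement $Y_\Gamma - Y_K^*$.

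Since $Y_\Gamma$ is prime by hypothesis and contains the embedded virtually compressible Klein bottle $K$, \Cref{prop:compressible_klein1} applies verbatim and endows $Y_\Gamma$ with the structure of a Seifert fibration over $\RP^2$ with at most one singular fiber. In particular $Y_\Gamma$ is Seifert-fibered, contradicting the standing assumption that it is not. Hence no Klein bottle piece can contain a virtually compressible Klein bottle, so the Klein bottle in every piece is virtually essential, as claimed.

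The argument is short and its only substantive content is the invocation of \Cref{prop:compressible_klein1}; the one point requiring care, which is hardly an obstacle, is confirming that virtual compressibility is an intrinsic property of the embedded pair $(Y_\Gamma, K)$ rather than an artifact of the particular plumbing decomposition, and that a Seifert fibration over $\RP^2$ with a single exceptional fiber does fall under the Seifert-fibered manifolds excluded by hypothesis. Both facts are immediate from the definitions, so the corollary follows.
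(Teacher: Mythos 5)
Your proposal is correct and matches the paper's reasoning exactly: the corollary is stated in the paper without a separate proof precisely because it is the contrapositive of \Cref{prop:compressible_klein1}, which is the argument you give. Your added remark that virtual compressibility is a property of the embedded pair and that a fibration over $\RP^2$ with one exceptional fiber is indeed Seifert-fibered is a fair point of care, but requires nothing beyond the definitions, as you note.
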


Thus if a plumbing boundary is not Seifert-fibered, it cannot contain a virtually compressible Klein bottle. 
While this does not fully deal with the issue of Klein bottle pieces, it does give rise to a simple test for eliminating the possibility of some embedded Klein bottles. To fully manage Klein bottle pieces in the contact setting, one would need to understand the contact-topological implications of the existence of virtually incompressible Klein bottles. We hope to pursue this in future work. For our present purposes, we will avoid analyzing this issue entirely by showing that the divisor boundary in our example cannot admit Klein bottles of any kind, virtually compressible or otherwise. This will be expanded upon in \Cref{sec:birational}.

\section{Main Results and Conclusions}\label{sec:birational}

In this section, we will collect a number of topological implications for divisor boundaries, compactifications, and neighborhoods that follow from the results in the previous sections. These results are collected with a goal of focusing on the nature of the structure of all divisor compactifications of a given Liouville 4-manifold $X$. We hope that these results will help contribute to the theory of \emph{birational symplectic geometry} in dimension 4.

In the next section, as an application of these results, we will prove \Cref{thm:main_example} which states that the submanifold $X_{KT} \subset M_{KT}$ does not admit a Stein completion to an affine variety. We will discuss how our arguments may be extended to produce other examples and conclude after briefly mentioning further directions for studying non-affine symplectic manifolds.

\subsection{Structural theorems and topological implications}

Let $(X,d\lambda)$ be a Liouville 4-manifold with finite topology. We will consider the collection of concave $SNC^+$ divisor compactifications of $X$. Recall that such a compactification is a pair $(M,D)$ with $(M,\omega)$ a closed symplectic manifold and $D \subset M$ a concave $SNC^+$ divisor such that $M - D$ is deformation equivalent to $X$. Each such compactification has an associated contact divisor boundary $(Y_D,\xi_D)$ obtained as the contact-type boundary of a concave neighborhood $(N_D,\omega)$ of $D$. Recall that we say that a divisor $D$ is obstructed if $\Gamma^{\xi}_D \not \approx \Gamma^{Top}_D$ and unobstructed otherwise. Our results allow us to characterize the compactifications of obstructed divisors whose associated divisor boundary $Y_D$ is a prime 3-manifold. If this is the case, then we say that $D$ is \emph{prime}.

\begin{thm}\label{thm:main_topological}
Let $(M,D)$ be a concave $SNC^+$ divisor compactification of $X$. Then if $D$ is obstructed and prime, we have one or more of the following:
\begin{enumerate}[label=(\roman*)]
    \item $M$ is a blow-up of a rational or ruled symplectic manifold,
    \item the contact 3-manifold $(\partial_\infty X, \xi)$ is Seifert-fibered over $\RP^2$ via a fibration with at most one singular fiber, or
    \item the contact 3-manifold $(\partial_{\infty} X, \xi)$ contains an embedded incompressible Klein bottle.
\end{enumerate}
\end{thm}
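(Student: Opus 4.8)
The plan is to extract the trichotomy directly from the structure of the contact reduction $\Gamma^\xi_D$ and then promote its first alternative from the neighborhood to the closed manifold $M$. First I would record that, because $(M,D)$ is a compactification, the capping gluing identifies $(\partial_\infty X, \xi)$ with the contact divisor boundary $(\partial N_D, \xi_D) = (Y_D, \xi_D)$; hence any contact-topological statement proven for $\partial N_D$ transfers verbatim to $\partial_\infty X$. By \Cref{lem:modified_reduction} the contact reduction $\Gamma^\xi_D$ is in contact normal form and is obtained from $\Gamma_D$ using only $(-1)$-blow-ups and blow-downs; since these are among Neumann's moves, both $\Gamma^\xi_D$ and the topological reduction $\Gamma^{Top}_D$ are TPC-combinatorially related to $\Gamma_D$ and share the diffeomorphism type $Y_D$.

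The crux of the first step is to see why obstructedness forces a distinguished feature of $\Gamma^\xi_D$. A graph in contact normal form fails to be in TPC normal form only in the two ways recorded in \Cref{sec:cpc_description}: it may carry a Klein bottle piece, or a chain may retain leading $0$-curves. If $\Gamma^\xi_D$ exhibited neither feature it would itself be in TPC normal form, whence Neumann's uniqueness (\Cref{thm:main_neumann}), applied to the two TPC normal forms $\Gamma^\xi_D$ and $\Gamma^{Top}_D$ of the same boundary $Y_D$, would give $\Gamma^\xi_D \approx \Gamma^{Top}_D$ and contradict the hypothesis that $D$ is obstructed. Thus $\Gamma^\xi_D$ contains a Klein bottle piece or a chain with a leading $0$-curve, and I would analyze these two cases separately.

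In the leading-$0$-curve case the offending vertex is a genus-zero divisor component, i.e.\ a symplectic sphere $S$ with $S \cdot S = 0$. Because the reduction is realized by honest symplectic $(-1)$-blow-ups and blow-downs of the neighborhood, these produce a closed symplectic manifold $\widetilde M$ --- related to $M$ through a finite sequence of blow-ups and blow-downs --- inside which $S$ is a symplectically embedded sphere of self-intersection $0$. I would then invoke McDuff's structure theorem for closed symplectic $4$-manifolds containing a symplectic sphere of non-negative self-intersection, concluding that $\widetilde M$ is rational or ruled; since the class of rational and ruled symplectic $4$-manifolds is closed under blow-up and blow-down, $M$ is itself a blow-up of a minimal rational or ruled manifold, which is alternative (i). I expect this globalization to be the main obstacle: it is the one point where the combinatorial, neighborhood-level output of the contact calculus must be upgraded to a statement about the closed manifold, and it depends on the reduced $0$-vertex genuinely representing a symplectic sphere whose self-intersection is computed in $\widetilde M$.

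In the Klein-bottle case, cutting $Y_D$ along the plumbing torus of the piece produces an embedded Klein bottle $K \subset Y_D \cong \partial_\infty X$ exactly as in \Cref{sec:klein}. Using primeness of $Y_D$ I would split on the virtual (in)compressibility of $K$: if $K$ is virtually compressible, \Cref{prop:compressible_klein1} endows $Y_D$ with the structure of a Seifert fibration over $\RP^2$ with at most one singular fiber, which is alternative (ii); if instead $K$ is virtually essential --- equivalently, the torus bounding its neighborhood is incompressible --- then $K$ is the embedded incompressible Klein bottle of alternative (iii) (this last implication is also immediate from \Cref{cor:essential} once $Y_D$ is known not to be Seifert-fibered). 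These two cases exhaust the possibilities, completing the trichotomy and recovering the neighborhood-level statement \Cref{thm:divisor_boundary} as the restriction to $\partial N_D$.
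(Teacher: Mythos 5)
Your proposal is correct and follows essentially the same route as the paper: it derives the dichotomy (leading $0$-curves versus Klein bottle pieces) from obstructedness, realizes the contact reduction by symplectic blow-ups of the neighborhood to produce a self-intersection-$0$ sphere and applies McDuff's theorem for alternative (i), and invokes \Cref{prop:compressible_klein1} and \Cref{cor:essential} together with primeness for alternatives (ii) and (iii), exactly as in the paper's proofs of \Cref{thm:divisor_boundary} and \Cref{thm:main_topological}. Your explicit justification of the dichotomy via \Cref{thm:main_neumann} merely spells out a step the paper leaves implicit.
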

This characterization follows directly from a characterization of concave neighborhoods of obstructed prime divisors.
\begin{proof}[\normalfont \textbf{Theorem~\ref{thm:divisor_boundary}}]
\renewcommand{\qedsymbol}{}
\em{
Let $(N_{D}, \omega)$ be a concave neighborhood of $D \subset M$. If the divisor $D$ is obstructed and prime, then one or more of the following are true:
\begin{enumerate}[label=(\roman*)]
    \item up to blow-ups $(N_D,\omega)$ is diffeomorphic to a concave divisor neighborhood $(N_{\widetilde{D}},\widetilde{\omega})$ whose associated divisor contains a 0-curve $S$ such that $S\cdot S = 0$,
    \item the contact boundary $(\partial N_D, \xi_D)$ is Seifert-fibered over the $\RP^2$ via a fibration with at most one singular fiber, or
    \item the contact boundary $(\partial N_D, \xi_{D})$ contains an embedded incompressible Klein bottle.
\end{enumerate}
}
\end{proof}
\begin{proof}
    Since $D$ is obstructed, $\Gamma^{\xi}_D \neq \Gamma^{Top}_D$. For this to occur, we must have one or both of the following:
    \begin{itemize}
        \item The contact reduction procedure for $\Gamma_D$ produces chains with leading 0-curves,
        \item The contact reduction $\Gamma^{\xi}_D$ contains Klein bottle pieces.
    \end{itemize}
    By following the contact reduction procedure for $\Gamma_D$, we may perform a sequence of symplectic blow-ups on $(N_D,\omega)$ to obtain a concave divisor neighborhood $(N_{\widetilde{D}}, \widetilde{D})$ associated to a concave divisor $\widetilde{D}$ whose associated graph $\Gamma_{\widetilde{D}}$ is isomorphic to the contact reduction $\Gamma_{D}^\xi$. 

    If the first case occurs, then we may conclude that $N_{\widetilde{D}}$ contains a symplectic sphere with self intersection number 0 and so we may conclude (i). If the second case occurs, since $D$ is prime, conclusions (ii) and (iii) follow from \Cref{prop:compressible_klein1} and \Cref{cor:essential}.
\end{proof}

\begin{proof}[\textbf{Proof of \ref{thm:main_topological}}] This result follows after applying a theorem of McDuff for ruled symplectic manifolds \cite[Corollary~1.5(ii)]{mcduff:rational}. In particular, \Cref{thm:main_topological}(i) follows from the fact that a compact symplectic manifold with a symplectic sphere of self-intersection number 0 is the blow-up of a ruled surface (topologically an $S^2$ bundle over a closed surface).
\end{proof}

\begin{rem}
   If one removes the word "prime" from each of the above theorems, then conclusions (ii) and (iii) become less refined and we may only conclude that $\partial N_D$ (or equivalently, the ideal contact boundary $\partial_\infty X$) contains an embedded Klein bottle.
\end{rem}

In the unobstructed case, the diffeomorphism type of the contact boundary is sufficient to characterize all concave divisor neighborhoods via direct application of \cite{Neumann:calculus}.

\begin{proof}[\normalfont \textbf{Theorem~\ref{thm:unique_nbhd}}]
Let $N_D$ and $N_{\widetilde{D}}$ denote concave divisor neighborhoods associated to a pair of unobstructed divisors $D$ and $\widetilde{D}$. Then if their divisor boundaries $\partial N_D$ and $\partial N_{\widetilde{D}}$ are diffeomorphic, the manifolds $N_D$ and $N_{\widetilde{D}}$ are diffeomorphic up to blow-ups.
\end{proof}

The combined results about obstructed prime divisors and unobstructed divisors give a complete topological classification of neighborhoods of concave $SNC^+$ divisors. 

With additional consideration, we may apply this to produce our most general result for divisor compactifications. As discussed, the divisor compactification $(M,D)$ admits a decomposition of the form 
\[M = \overline{X} \cup_{\Phi} N_D\]
where $(\overline{X}, d\lambda)$ is a Liouville domain, $(N_D,\omega)$ is a concave neighborhood of $D$ and $\Phi \colon (\partial \overline{X}, \xi) \to (\partial N_D,\xi)$ is a contactomorphism defining the gluing. By considering divisor boundaries and ideal contact boundaries, we may consider this to be a contactomorphism $\Phi \colon (\partial_\infty X, \xi) \to (Y_D,\xi_D)$. Thus we may consider $\Phi$ to be an element of Cont($\partial_{\infty} X, \xi$), the contactomorphism group of the ideal contact boundary. The isotopy class $[\Phi] \in \pi_0($Diff$(\partial_\infty X)$ determines the diffeomorphism type of $M$. This plus \Cref{thm:unique_nbhd} is enough to conclude:
\begin{proof}[\normalfont \textbf{Theorem~\ref{thm:main}}]
\renewcommand{\qedsymbol}{}
\em{
    Let $D$ be a concave compactifying $SNC^+$ divisor for a 4-dimensional Liouville domain $(X^4,\omega)$ and let $(M^4,\omega)$ be a compactification of $X$ by an $SNC^+$ divisor $D$ with associated mapping class $[\Psi] \in \pi_0(Cont(\partial X))$ defining the capping. Then $M$ either satisfies at least one of:
    \begin{enumerate}[label=(\roman*)]
        \item $(M,\omega)$ is a blow-up of a ruled symplectic manifold,
        \item $\partial X$ is Seifert-fibered over the $\RP^2$ via a fibration with at most one singular fiber,
        \item $\partial X$ contains a virtually essential Klein bottle
    \end{enumerate}
    or any other $SNC^+$ divisor compactification $(\widetilde{M}, \widetilde{\omega})$ with the same mapping class $[\Psi]$ can be obtained from $(M,\omega)$ via blow-ups and blow-downs.
    }
\end{proof}
This result could be strengthened if one could understand to what extent the contact mapping class $[\Psi]$ may vary. For example, if every diffeomorphism $\Psi \colon \partial N_D \to \partial N_D$ extends to a diffeomorphism $\widetilde{\Psi} \colon N_{D} \to N_{D}$, then the diffeomorphism type of any divisor compactification of $X$ is completely determined by the diffeomorphism type of $X$ and the topology of $D$, independent of the choice of contactomorphism defining the gluing. We will use this to finish a proof of \Cref{thm:main_example} in the next section.

\subsection{0-spheres and Giroux torsion}

We will only consider connected plumbing graphs $\Gamma$ for the remainder of the discussion. For now, we also assume that $\Gamma$ is without loops i.e. without configurations of the form
 \[\begin{tikzpicture}[scale=0.5]
    \GraphInit[vstyle=Empty]
    \SetGraphUnit{2}
    \Vertex[NoLabel]{A}
    \EA[L=$\cdots$](A){B}
    \AddVertexColor{black}{A}

    \Edges(A,B)
    \Loop(A)

  \end{tikzpicture}\]

Let $Y_\Gamma$ be the 3-manifold plumbing obtained from $\Gamma$. When $\Gamma$ is the graph associated to a symplectic normal crossing divisor $D \subset M$, the positive (resp. negative) GS-criterion says that $D$ has a concave (convex) regular neighborhood if and only if the linear equation
\[Q_D\uline{b} = \uline{a}\]
has a solution $\uline{b} \in \RR^{N}$ with all positive (negative) entries. In both these cases, the concave/convex neighborhood furnishes the boundary plumbing with a natural contact structure which is co-oriented relative to the boundary orientation. 

The anatomy of this contact structure is outlined in but briefly, the contact boundaries are built from pieces of two types:
\begin{itemize}
    \item Type I Pieces: which are diffeomorphic to $S^1$-bundles over surfaces with boundary.
    \item Type II Pieces: neighborhoods of the plumbing tori which are diffeomorphic to $T^2 \times I$ equipped with a minimally twisting contact structure.
\end{itemize}

The induced contact structures Type I pieces are regular in the sense of Boothby and Wang and so their Reeb foliations provide them with a natural $S^1$-fibration structure. Type II pieces are simple interpolations between the characteristic slopes of the torus boundaries of two neighboring Type I pieces, connected according to $\Gamma_D$. Without much additional work, one can construct similar contact structures on arbitrary plumbings.

\begin{thm}
Let $\Gamma$ be a plumbing graph. Then there exists a contact structure $\xi_{\Gamma}$ on the 3-manifold plumbing $Y_\Gamma$ which is transverse to the $S^1$-fibration away from the plumbing tori and minimally twisting near the plumbing tori. 
Any other contact structure on $Y_\Gamma$ satisfying these conditions is contactomorphic to $\xi_\Gamma$. In the case when $\Gamma$ satisfies the positive/negative GS-criterion, $\xi_\Gamma$ is co-orientable and is contactomorphic to the natural contact structure induced by the concave/convex neighborhood.
\end{thm}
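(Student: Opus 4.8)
The plan is to build $\xi_\Gamma$ by hand on the canonical decomposition of $Y_\Gamma$ into Type I and Type II pieces, invoke the classification of tight contact structures on circle bundles and on $T^2 \times I$ to establish uniqueness, and finally match the result against the GS-construction. First, for existence, I would cut $Y_\Gamma$ along the plumbing tori into Type I pieces $P_v$ (one per vertex $v$: an $S^1$-bundle of Euler number $k(v)$ over $\Sigma_{g(v)}$ with one open disk removed for each incident edge) and Type II pieces $T_e \cong T^2 \times I$ (one per edge). On each $P_v$ I construct a contact structure transverse to the fibers by the Boothby--Wang/Lutz--Giroux recipe: take a connection $1$-form $\alpha_v$ whose curvature is an area-form multiple adapted to $k(v)$ and perturb so the total form is contact with the $S^1$-orbits positively transverse to $\xi$. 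This records a characteristic slope on each boundary torus $\partial P_v$, determined by $k(v)$ and the connection. On each $T_e$ I place a minimally twisting tight contact structure on $T^2\times I$ whose two boundary slopes agree with the characteristic slopes of the adjacent Type I pieces; since such interpolating structures exist (and may be taken with no Giroux torsion), the pieces glue to a global $\xi_\Gamma$.

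For uniqueness, suppose $\xi'$ also satisfies the two conditions. After an isotopy I make the plumbing tori convex with dividing slopes matching those of $\xi_\Gamma$, so that $\xi'$ again decomposes along them. On each Type I piece, transversality to the $S^1$-fibers forces $\xi'$ to be isotopic to an $S^1$-invariant transverse model, and such structures on a circle bundle over a surface with boundary are rigid once the boundary characteristic foliations are fixed. On each Type II piece, the minimal twisting hypothesis together with the fixed boundary slopes pins $\xi'$ down to the unique minimally twisting $T^2\times I$ model via Honda's classification, since minimal twisting removes the Giroux-torsion ambiguity that otherwise distinguishes tight contact structures on $T^2\times I$. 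Assembling these piecewise isotopies into a global contactomorphism is the step requiring care, as discussed below.

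For the GS-comparison, when $\Gamma$ satisfies the positive (resp.\ negative) GS-criterion, the capping theorem of \cite{LM:capping} furnishes a concave (convex) neighborhood $N_D$ whose boundary carries an induced contact structure. Inspecting the GS-construction, the induced Liouville field produces a globally defined contact form, which is exactly the source of co-orientability, and the induced $\xi$ is transverse to the $S^1$-fibers on each Type I region and minimally twisting on the interpolating $T^2\times I$ collars---this is precisely the ``anatomy'' of the contact structure recalled just above the statement. By the uniqueness already established, this induced structure is therefore contactomorphic to $\xi_\Gamma$.

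I expect the uniqueness step, and specifically the bookkeeping at the gluings, to be the main obstacle. One must verify that the characteristic slopes on the plumbing tori are \emph{intrinsic}---determined by $\Gamma$ together with the transversality and minimal-twisting conditions rather than by incidental choices in the construction---and that the gluing diffeomorphisms between neighboring pieces are rigid enough for the piecewise contactomorphisms to assemble into a single global one, which forces attention to the relevant mapping-class data on each torus. A more conceptual subtlety is the co-orientability claim: showing that it is exactly the GS-criterion that upgrades the \emph{a priori} only locally co-oriented $\xi_\Gamma$ to a globally co-oriented contact structure amounts to tracking the positivity data $\underline{b} > 0$ through the transverse construction, and I would treat this as the delicate point in the final paragraph.
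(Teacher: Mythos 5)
Your overall architecture---cutting $Y_\Gamma$ along the plumbing tori into vertex pieces and $T^2\times I$ edge pieces, putting a fiber-transverse structure on the former and a minimally twisting interpolation on the latter, then matching against the GS-construction via uniqueness---is the same as the paper's. However, your existence step has a genuine gap at vertices with $k(v)=0$. The curvature of a connection $1$-form on an $S^1$-bundle integrates over the base to (a normalization of) the Euler number, so when $k(v)=0$ the curvature cannot be a multiple of an area form: your Boothby--Wang recipe degenerates to a flat-type connection whose kernel is a foliation, not a contact structure, and the phrase ``perturb so the total form is contact'' is doing all the work without justification. The paper treats these vertices separately: it takes the product foliation on $\Sigma\times S^1$ and invokes Eliashberg--Thurston together with Honda's uniqueness result for the contact structure $C^0$-close to that foliation, which is horizontal when $g(\Sigma)>0$. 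Worse for your formulation, when $g(\Sigma)=0$ and $k=0$ (a $0$-sphere vertex---exactly the configuration central to the paper's later arguments about leading $0$-curves), $S^2\times S^1$ admits \emph{no} contact structure everywhere transverse to the fibers; only the almost-horizontal tight structure exists, so a construction demanding the $S^1$-orbits be positively transverse to $\xi$ on the whole vertex piece cannot be carried out, and one must instead excise fiber neighborhoods from a region where the perturbed structure happens to be horizontal, as the paper does. Your uniqueness clause inherits the same problem: for such pieces the hypothesis ``transverse to the $S^1$-fibration away from the plumbing tori'' must be weakened to almost-horizontal, which your convex-surface argument does not accommodate.

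On the remaining points you are in good shape, and in one respect ahead of the source: the paper's own proof constructs $\xi_\Gamma$ piece by piece and then simply asserts agreement with the concave/convex structures and co-orientability, without spelling out the uniqueness argument at all. Your plan---normalize the plumbing tori by convexity with controlled dividing slopes, use rigidity of transverse ($S^1$-invariant) structures on circle bundles over surfaces with boundary, and pin down the edge pieces by Honda's classification of minimally twisting structures on $T^2\times I$---is a reasonable way to fill in that omission, and you correctly identify the slope bookkeeping and the mapping-class data at the gluing tori as the delicate step. Your GS-comparison paragraph matches the paper's (brief) treatment, with co-orientability coming from the global contact form on the concave/convex boundary.
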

\begin{proof}
We proceed to build our contact manifolds out of two types of pieces as before: 
\begin{itemize}
\item Type I pieces are diffeomorphic to $S^1$-bundles over surfaces with boundary with horizontal contact structures.

\item Type II Pieces are diffeomorphic to $T^2 \times I$ with minimally twisting contact structures.
\end{itemize}

We will have one Type I piece $Y_v$ for each vertex $v$ in $\Gamma$. The contact manifold $Y_v$ will be obtained by removing solid tori from a closed contact manifold. That closed contact manifold will depend on the decorations $(g,k)$ of the vertex $v$. We will take different approaches depending on whether $k = 0$ or $k \neq 0$. We will handle the case $k \neq 0$ first. 

When $D$ is a smooth divisor, the only entry in the intersection form $Q_D$ is $k := D\cdot D$ and so the GS-criteria become equivalent to the requirement that the self-intersection number take a definite sign. In the case when $k > 0$, $D$ admits a concave neighborhood. In the case $k < 0$, it admits a convex one. 

In both cases, the induced contact manifold $(\partial N_D, \xi_D)$ admits the structure of a Boothby-Wang bundle over the divisor, i.e. a bundle map $\pi \colon (\partial N_D, \xi_D) \to (D,\omega)$ which satisfies $\pi^*d\alpha = \omega$. In this setting, the Reeb vector field is everywhere tangent to the $S^1$-fibers and the bundle map $\pi$ is obtained as a quotient of the periodic flow of the Reeb vector field. Because of the co-orientation of the contact structure, in both the $k > 0$ and $k < 0$ cases, the bundle is $\pi \colon (\partial N_D, \xi_D) \to (D,\omega)$ is the principal $S^1$-bundle with Euler number $e(\partial N_D, \xi_D) = |k|$. 

We start with the bundle $\pi \colon (\partial N_D, \xi_D) \to (D,\omega)$ and remove small solid tori neighborhoods of a collection of $d_v$ disjoint fibers where $d_v$ is the degree of the vertex. This leaves us with a contact $S^1$-bundle over a surface with boundary with Reeb fibers. This piece has $d_v$ boundary components, one for each solid torus that was removed.

Finally, there is the case when $k = 0$. If $\Sigma$ is a closed surface, by \cite{ET:foliations} and \cite{Honda:tight2} there is a unique contact structure on $\Sigma \times S^1$ which is $C^0$-close to the folation $\Sigma \times \{pt\}$. In the case when $g(\Sigma) > 0$, this contact structure is horizontal with respect to the natural $S^1$-fibration structure. If $g(\Sigma) = 0$, this is the unique almost-horizontal contact structure on $S^2 \times S^1$. In either case, we start with $\Sigma \times S^1$ and remove small solid torus neighborhoods of a collection of $d_v$ disjoint fibers for which the contact structure is horizontal nearby.

We construct pieces of Type II by interpolating between the characteristic slopes of the boundary tori of two neighborhood Type I pieces, connected accordint to $\Gamma$.

It is clear from construction that our contact structures agree with those induced by concave/convex neighborhoods of divisors. The fact that this contact structure is globally co-orientable follows from the co-orientability of the induced structures.
    
\end{proof}

We call any realization $(Y_{\Gamma}, \xi_\Gamma)$ of this unique contactomorphism type a \emph{contact plumbing}. Since the natural contact structures on concave divisors satisfy these hypotheses, we have:
\begin{cor}
Any convex/concave plumbing boundary $(\partial N_D, \xi)$ is contactomorphic to the contact plumbing $(Y_D,\xi_D)$ determined by the divsor graph $\Gamma_D$.
\end{cor}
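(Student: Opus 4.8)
The plan is to deduce this corollary directly from the preceding theorem by specializing it to the case $\Gamma = \Gamma_D$. First I would observe that for any concave or convex $SNC^+$ divisor $D \subset M$, the associated divisor graph $\Gamma_D$ is a genuine plumbing graph: its vertices carry the genus-and-self-intersection decorations $(g,k)$, its edges record the transverse intersection points, and it is free of loops by the simplicity hypothesis. Hence the theorem applies verbatim with $\Gamma = \Gamma_D$, producing the contact plumbing $(Y_D, \xi_D) = (Y_{\Gamma_D}, \xi_{\Gamma_D})$ with its canonical contact structure.

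Next I would invoke the defining property of concavity (respectively convexity): since $D$ admits a concave (convex) neighborhood $N_D$, its decorated graph satisfies the positive (negative) GS-criterion. This places us precisely in the final hypothesis of the theorem, whose last clause asserts that $\xi_{\Gamma_D}$ is co-orientable and contactomorphic to the natural contact structure induced on the boundary of the concave/convex neighborhood --- that is, to $(\partial N_D, \xi)$. It then follows immediately that $(\partial N_D, \xi)$ is contactomorphic to $(Y_D, \xi_D)$, as claimed.

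The only point requiring verification is that the induced structure $\xi$ on $\partial N_D$ genuinely satisfies the two hypotheses defining a contact plumbing, namely transversality to the $S^1$-fibration away from the plumbing tori together with minimal twisting near them. This is exactly the anatomy recorded before the theorem: the contact boundary decomposes into Type I pieces, on which the Boothby--Wang (or horizontal) contact structure has Reeb flow tangent to the $S^1$-fibers, so that $\xi$ is transverse to the fibration, and Type II pieces, which are neighborhoods of the plumbing tori carrying minimally twisting $T^2 \times I$ contact structures interpolating the characteristic slopes of the adjacent Type I pieces. Since both $\xi$ and the constructed $\xi_{\Gamma_D}$ meet these conditions, the uniqueness clause of the theorem identifies them up to contactomorphism. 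I expect no serious obstacle here: all the analytic content is already absorbed into the theorem, and the corollary amounts to a bookkeeping specialization that matches the GS-criterion to the concavity/convexity hypothesis on $D$.
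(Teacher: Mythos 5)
Your proposal is correct and follows essentially the same route as the paper, which derives the corollary in one line from the preceding theorem by noting that the contact structures induced on concave/convex divisor boundaries satisfy its hypotheses (via the Type I Boothby--Wang and Type II minimally twisting decomposition), so the uniqueness clause and the GS-criterion clause apply. Your explicit verification that $\xi$ on $\partial N_D$ is transverse to the fibration away from the plumbing tori and minimally twisting near them is exactly the bookkeeping the paper leaves implicit.
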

We conclude that the contact geometry of concave divisor boundaries is topologically determined by the divisor graph $\Gamma_D$. We are thus free to draw conclusions about the contactomorphism type of a concave divisor boundary from topological information about the graph $\Gamma_D$. One such conclusion which one can reach is:

\begin{thm}
    If $(Y_{\Gamma},\xi_\Gamma)$ is a contact plumbing and $\Gamma$ contains a configuration of the form
 \[\begin{tikzpicture}[scale=0.5]
    \GraphInit[vstyle=Empty]
    \SetGraphUnit{2}
    \Vertex[L=$\cdots$]{A}
    \EA[LabelOut, Lpos=90, L=$0$](A){B}
    \EA[LabelOut, Lpos=90, L=$0$](B){C}
    \EA[LabelOut, Lpos=90, L=$0$](C){D}
    \EA[LabelOut, Lpos=90, L=$0$](D){E}
    \EA[L=$\cdots$](E){F}
    \AddVertexColor{black}{B,C,D, E}
    
    \Edges(A,B,C,D,E,F)

  \end{tikzpicture}\]
    which is not part of a component of $\Gamma$ of the form
 \[\begin{tikzpicture}[scale=0.5]
    \GraphInit[vstyle=Empty]
    \SetGraphUnit{2}
    \Vertex[LabelOut, Lpos=90, L=$0$]{A}
    \EA[LabelOut, Lpos=90, L=$0$](A){B}
    \SO[LabelOut, Lpos=-90, L=$0$](A){C}
    \SO[LabelOut, Lpos=-90, L=$0$](B){D}
    \AddVertexColor{black}{A,B,C,D}
    
    \Edges(A,B,D,C,A)

  \end{tikzpicture}.\]
    then $(Y_{\Gamma}, \xi_{\Gamma})$ has nontrivial Giroux torsion.
\end{thm}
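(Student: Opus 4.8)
The plan is to realize the four consecutive $0$-curves as a $T^2\times I$ region of $(Y_\Gamma,\xi_\Gamma)$ on which the characteristic slope sweeps out a full $2\pi$ turn, and then to extract from it an embedded copy of the standard Giroux torsion layer. Using the Type I / Type II decomposition furnished by the contact plumbing theorem above, each degree-$2$ $0$-sphere contributes a Type~I piece diffeomorphic to (annulus)$\times S^1 \cong T^2\times I$ carrying the horizontal almost-contact structure of \cite{ET:foliations,Honda:tight2}, while consecutive pieces are joined across the plumbing tori by minimally twisting Type~II pieces. First I would fix a framing $(f_i,b_i)$ (fibre, base-boundary) on each piece and record two facts: the horizontal structure has constant dividing slope equal to the base direction $b_i$ throughout each Type~I piece, and the plumbing gluing at each edge exchanges fibre and base. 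Concatenating these, the transfer of the dividing slope across a single $0$-curve is the map $s\mapsto -1/s$ on $\RP^1$; lifting to actual curve directions (the universal cover of $\RP^1$) this is the rotation $R_{\pi/2}\in SO(2)$, which I will denote $A$.

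The key computation is then that crossing all four $0$-curves applies $A^4=R_{2\pi}=\mathrm{Id}$. Writing $T_0,\dots,T_4$ for the successive plumbing tori bounding the four pieces and $s_j$ for the dividing slope on $T_j$, one has $s_j=A^{\,j}s_0$, so $s_4=s_0$ while the direction has rotated by exactly $2\pi$ in between (among the weights present, only the $0$-curves produce elliptic turns that accumulate; weights $\le -2$ are parabolic/hyperbolic and contribute no net winding). Because the Type~II interpolations rotate the slope monotonically in one direction and the Type~I pieces keep it constant, the region $R$ bounded by $T_0$ and $T_4$ is a tight $T^2\times I$ whose two boundary slopes agree and across which the slope makes one full monotone $2\pi$ turn. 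By Honda's classification of tight contact structures on $T^2\times I$ \cite{Honda:tight2}, such a region is contactomorphic to the standard torsion layer $\zeta_1=\ker(\cos(2\pi z)\,dx-\sin(2\pi z)\,dy)$, the interpolating pieces supplying the slight extra rotation needed to place it in the interior. Hence $(Y_\Gamma,\xi_\Gamma)$ contains an embedded $\zeta_1$ and so has Giroux torsion at least one.

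Finally, I would explain the role of the excluded configuration, which is exactly the borderline case. The hypothesis that the four $0$-curves are not a full $4$-cycle component guarantees that $T_0$ and $T_4$ are distinct tori with further graph attached beyond them, so that $R$ is a proper sub-$T^2\times I$ and the torsion layer is genuinely interior. If instead the four $0$-curves closed into a cycle, the identity $A^4=\mathrm{Id}$ would be the monodromy of the associated torus bundle over $S^1$, namely $T^3$, and the $2\pi$ rotation would occur over the entire base circle; the resulting contact manifold is $(T^3,\xi_1)$, which is minimally twisting and has zero Giroux torsion. Thus the exclusion removes precisely the case in which the full turn wraps all the way around the base and leaves no interior collar.

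The main obstacle I anticipate is the second step: rigorously upgrading the slope bookkeeping, which a priori only controls the transfer matrices (the endpoint slopes), to an honest identification of $\xi_\Gamma|_R$ with the model $\zeta_1$. This requires verifying that the rotation realized by the horizontal pieces together with the minimally twisting interpolations is genuinely monotone and totals $2\pi$ rather than merely closing up modulo $\pi$, and performing the convex-surface edge-rounding at the plumbing tori so that the $T^2\times I$ classification of \cite{Honda:tight2} applies cleanly. The factor-of-two care distinguishing a $\pi$ turn (two $0$-curves; minimally twisting, no torsion) from a $2\pi$ turn (four $0$-curves; one torsion layer) is the crux of the argument, and is exactly where the count of four enters.
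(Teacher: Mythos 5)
Your proposal is correct and takes essentially the same route as the paper: the paper's (very terse) proof likewise uses the local toric structure on the chain of $0$-spheres, assigns slightly more than $\pi/2$ of slope rotation to each $0$-curve so that four of them accumulate more than $2\pi$ of twisting, and excludes exactly the $4$-cycle case, which yields $T^3$ with the universally tight, torsion-zero structure $\xi_1$. Your additional detail---the transfer matrix $s \mapsto -1/s$ per $0$-curve and the appeal to Honda's classification of tight structures on $T^2 \times I$ to identify the region with the standard torsion layer---is a more careful rendering of the same bookkeeping the paper compresses into two sentences.
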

\begin{proof}
Any portion of a 3-manifold boundary corresponding to the configuration 
 \[\begin{tikzpicture}[scale=0.5]
    \GraphInit[vstyle=Empty]
    \SetGraphUnit{2}
    \Vertex[L=$\cdots$]{A}
    \EA[LabelOut, Lpos=90, L=$0$](A){B}
    \EA[LabelOut, Lpos=90, L=$0$](B){C}
    \EA[LabelOut, Lpos=90, L=$0$](C){D}
    \EA[LabelOut, Lpos=90, L=$0$](D){E}
    \EA[L=$\cdots$](E){F}
    \AddVertexColor{black}{B,C,D, E}
    
    \Edges(A,B,C,D,E,F)

  \end{tikzpicture}\]
admits a local toric contact structure. Each 0-framed sphere contributes $> \frac{\pi}{2}$-torsion to the contact structure and so we will have $> 2\pi$ torsion except in the case when $\Gamma_D$ is of the form 
 \[\begin{tikzpicture}[scale=0.5]
    \GraphInit[vstyle=Empty]
    \SetGraphUnit{2}
    \Vertex[LabelOut, Lpos=90, L=$0$]{A}
    \EA[LabelOut, Lpos=90, L=$0$](A){B}
    \SO[LabelOut, Lpos=-90, L=$0$](A){C}
    \SO[LabelOut, Lpos=-90, L=$0$](B){D}
    \AddVertexColor{black}{A,B,C,D}
    
    \Edges(A,B,D,C,A)

  \end{tikzpicture}.\]
for which we have $Y_D \approx T^3$ and $\xi_D$ contactomorphic to the universally tight contact structure on $T^3$.
\end{proof}

If a plumbing graph $\Gamma$ admits such a configuration, we say that it has \emph{positive torsion}. Any divisor $D$ whose divisor graph $\Gamma_D$ has positive torsion is also said to have positive torsion. This result has symplectic-topological implications. If $(M,\omega)$ is a closed symplectic 4-manifold and $D \subset M$ is a concave divisor, then any concave divisor neighborhood $N_D \subset M$ determines a weak filling $X = M - N_D$ of $(\partial N_D, \xi_D)$. Since Giroux torsion is an obstruction to weak fillability \cite{Gay:handles}, we immediately conclude: 

\begin{cor}
    If $\Gamma$ is a plumbing graph with positive torsion, then $\Gamma$ cannot appear as the divisor graph of an \underline{embedded} concave divisor $D \subset M$. In other words, concave divisors with positive torsion cannot be embedded into closed symplectic 4-manifolds.
\end{cor}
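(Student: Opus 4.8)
The plan is to argue by contradiction, exploiting the fact that embedding a concave divisor into a \emph{closed} symplectic manifold automatically exhibits its contact boundary as a weakly fillable contact manifold, which the presence of Giroux torsion forbids. So suppose $\Gamma = \Gamma_D$ has positive torsion and yet arises as the divisor graph of an embedded concave divisor $D \subset (M,\omega)$ with $M$ closed. By concavity, $D$ admits a concave neighborhood $N_D \subset M$ whose contact-type boundary is $(\partial N_D, \xi_D) = (Y_D, \xi_D)$. Deleting the open neighborhood, the complement $X = M - N_D$ inherits the structure of a weak symplectic filling of $(Y_D,\xi_D)$ --- this is precisely the weak-filling observation recorded in the discussion preceding the corollary.

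The next step is to read off the contact geometry of this boundary combinatorially. By the corollary asserting that every concave plumbing boundary is contactomorphic to the contact plumbing determined by its divisor graph, we have $(\partial N_D,\xi_D)\cong (Y_\Gamma,\xi_\Gamma)$. Since $\Gamma$ has positive torsion, it contains a string of four consecutive $0$-curves that is not part of a $T^3$-component, so the torsion theorem proved above applies and shows that $(Y_\Gamma,\xi_\Gamma)$ has nontrivial Giroux torsion; transporting this across the contactomorphism, $(Y_D,\xi_D)$ carries positive Giroux torsion. Invoking Gay's result that positive Giroux torsion obstructs weak symplectic fillability \cite{Gay:handles}, we conclude that $(Y_D,\xi_D)$ admits no weak filling, contradicting the existence of $X$. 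Hence no such embedding exists, which is the assertion.

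The point I would be most careful about is bookkeeping of orientation and co-orientation conventions: one must confirm that excising a \emph{concave} (negative-end) neighborhood from the closed manifold $M$ leaves $(Y_D,\xi_D)$ on the \emph{convex} side of $X$, so that $X$ is genuinely a positive weak filling rather than a second concave cap --- this is exactly the hypothesis under which Gay's obstruction bites. I would also verify that the positive-torsion hypothesis excludes the single exceptional square of four $0$-curves yielding $(T^3,\xi_{std})$, which is weakly fillable and torsion-free; but this exclusion is already folded into the definition of positive torsion and into the statement of the torsion theorem, so no additional argument is required. The only substantive external input is the cited fillability obstruction itself, which I take as given; everything else is a direct concatenation of the results established above.
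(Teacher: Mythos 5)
Your proof is correct and follows essentially the same route as the paper, which deduces the corollary immediately from the observation that $X = M - N_D$ is a weak filling of $(\partial N_D,\xi_D)$, the torsion theorem identifying nontrivial Giroux torsion via the contact plumbing $(Y_\Gamma,\xi_\Gamma)$, and the fillability obstruction of \cite{Gay:handles}. Your additional care about co-orientation conventions and the exceptional $T^3$ configuration is sound but already implicit in the paper's definitions of concavity and positive torsion.
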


By contrast, every convex symplectic divisor embeds into a closed symplectic 4-manifold since we may always cap off a convex divisor neighborhood.

\section{Application: A non-algebraic Stein manifold}\label{sec:example}

Let $M_{KT}$ denote the Kodaira-Thurston manifold (see \cite{thurston:example}) defined as $M_{KT} := \MMM_{\phi} \times S^1$ 
where $\MMM_{\phi}$ is the mapping torus of a right Dehn twist $\phi \colon T^2 \to T^2$. We give $\MMM_{\phi}$ local coordinates $(\theta_1, \theta_2, \theta_3)$ and so $M_{KT}$ may be given local coordinates $(\theta_1, \theta_2, \theta_3, \theta_4)$ and we define a symplectic form
\[\omega := d\theta_1 \wedge d\theta_2 + d\theta_3 \wedge d\theta_4.\]

We note that $(M_{KT},\omega)$ contains a natural $SNC^+$ divisor. Let $\sigma \colon S^1 \to \MMM_{\phi}$ be a section of the mapping torus and let $\pi \colon \MMM_{\phi} \to S^1$ be the natural projection map. We define two tori 
\[D_L := \pi^{-1}(pt) \times \{pt\}\]
\[D_R := \sigma(S^1) \times S^1\]
and let $D := D_L \cup D_R$. Evidently, $D_L$ and $D_R$ are $\omega$-orthogonal symplectic divisors and so $D$ is an $SNC^+$ divisor. 

The plumbing graph associated to the divisor $D$ is 
\[
  \begin{tikzpicture}[scale=0.5]
    \GraphInit[vstyle=Empty]
    \SetGraphUnit{3}
    \Vertex[LabelOut, Lpos=90,L={$(1, 0)$}]{A}
    \EA[LabelOut, Lpos=90, L={$(1, 0)$}](A){B}
    
    \Edges(A,B)
    \AddVertexColor{black}{A,B}
  \end{tikzpicture}.
\]
and so the intersection form is given by 
\[
    Q_D = 
        \begin{bmatrix}
            0 & 1\\
            1 & 0
        \end{bmatrix}
\]
which admits many GS-positive solutions (regardless of the symplectic area chosen for $D_L$ and $D_R$). It follows that $N_D$ is concave and so it has contact-type boundary.

We let $X := M_{KT} - D$. Since $X$ is a symplectic punctured torus bundle over a symplectic punctured torus, $X$ is a Liouville manifold (in fact, a Stein domain) \cite[Theorem~1.2]{BVHM:liouville}. The ideal contact boundary $\partial_{\infty} X$ is contactomorphic to $\partial N_D$. 
\begin{proof}[\normalfont \textbf{Theorem~\ref{thm:main_example}}]
\renewcommand{\qedsymbol}{}
\em{
    The Liouville manifold $X$ is not biholomorphic to any affine variety.
    }
\end{proof}
Whose proof will rely on the main application of our result.
\begin{prop}\label{prop:unobstructed}
Any concave $SNC^+$ compactifying divisor $\widetilde{D}$ for $X_{KT}$ is unobstructed and prime.
\end{prop}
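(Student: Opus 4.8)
The plan is to treat the contact boundary as fixed and feed its topology into \Cref{thm:divisor_boundary}. Every concave compactifying divisor $\widetilde{D}$ caps the same Liouville domain along the same contact boundary, so its divisor boundary $(\partial N_{\widetilde{D}},\xi_{\widetilde{D}})$ is contactomorphic to the fixed contact manifold $(Y,\xi):=(\partial_\infty X,\xi)$. I first identify $Y$ explicitly from the graph of the model divisor $D=D_L\cup D_R$: it is the plumbing of two genus-one vertices of self-intersection $0$ joined by a single edge, i.e.\ two copies of the trivial circle bundle over $T^2$ (each a $T^3$ with a fibered solid torus removed) glued along a torus by the plumbing identification, which interchanges base and fiber directions. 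This realizes $Y$ as an aspherical graph manifold whose JSJ decomposition is exactly these two Seifert pieces glued along the central plumbing torus; because the two Seifert fiberings have transverse slopes along that torus, the torus is genuinely essential and $Y$ is not itself Seifert-fibered.

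Since primality is a diffeomorphism invariant of $Y=\partial N_{\widetilde{D}}$, asphericity of $Y$ (hence irreducibility) immediately gives that every $\widetilde{D}$ is prime, establishing the prime half of the statement and verifying the standing hypothesis of \Cref{thm:divisor_boundary}. It then suffices to exclude all three conclusions (i)--(iii) of that theorem for a prime obstructed $\widetilde{D}$, since excluding them forces $\widetilde{D}$ to be unobstructed. Conclusion (ii) is excluded because $Y$ is not Seifert-fibered at all, in particular not over $\RP^2$. Conclusion (iii) is excluded by showing $\pi_1(Y)$ contains no Klein-bottle subgroup $\langle x,y\mid yxy^{-1}=x^{-1}\rangle$: as $Y$ is assembled from orientable circle bundles over orientable surfaces glued by orientation-preserving maps, no primitive element is conjugate to its inverse, so $Y$ admits no embedded virtually essential Klein bottle. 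Together with \Cref{prop:compressible_klein1} and \Cref{cor:essential}, excluding (ii) and (iii) rules out embedded Klein bottles of every kind, hence Klein-bottle pieces in $\Gamma^\xi_{\widetilde{D}}$.

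The remaining and decisive step is to exclude conclusion (i): that $\widetilde{M}$ is a blow-up of a rational or ruled symplectic manifold. By the proof of \Cref{thm:divisor_boundary}, the only other source of obstruction is a chain carrying two or more leading $0$-curves, which after the reduction blow-ups produces a symplectic sphere of square $0$ inside $N_{\widetilde{D}}$, and by McDuff \cite{mcduff:rational} this is precisely what makes $\widetilde{M}$ rational or ruled. Thus once (i) is excluded, no obstructing leading $0$-curves can occur and $\widetilde{D}$ is unobstructed. I expect this to be the main obstacle, because the rational/ruled alternative is invisible to the fixed topology of $Y$: topologically the stray $0$-sphere contributes only an absorbable solid-torus piece, so it must be eliminated by a global input specific to $X_{KT}$. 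The plan here is to exploit the Kodaira--Thurston geometry: a rational or ruled $\widetilde{M}$ would be a blow-up of a Kähler surface of Kodaira dimension $-\infty$, whereas $M_{KT}$ is a non-Kähler symplectic Calabi--Yau, and I would argue that $X_{KT}$ admits no uniruled compactification---most directly through McLean's symplectic-homology growth obstruction \cite{McLean:growth}, or through an incompatibility between the nilpotent fundamental group carried by $Y$ and the surface-group (or trivial) fundamental group of a ruled (or rational) $\widetilde{M}$.

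With conclusions (i)--(iii) all excluded for the prime divisor $\widetilde{D}$, \Cref{thm:divisor_boundary} yields that $\widetilde{D}$ cannot be obstructed, so it is unobstructed and prime, completing the proof.
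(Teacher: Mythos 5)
Your overall architecture matches the paper's: establish primality, rule out Klein bottles (virtually compressible via \Cref{prop:compressible_klein1} and the fact that $Y$ is not Seifert-fibered, virtually essential separately), and rule out the $0$-sphere/ruled alternative, whence the contrapositive of \Cref{thm:divisor_boundary} forces unobstructedness. Your primality argument (graph manifold with essential JSJ torus, hence aspherical, hence irreducible) is a legitimate, slightly more high-tech substitute for the paper's hands-on cut-and-isotope argument along the plumbing torus. But there is a genuine gap at exactly the step you yourself flag as decisive: the exclusion of conclusion (i). Neither of your two proposed strategies works. McLean's growth-rate obstruction cannot help here: it obstructs the \emph{existence} of normal crossing compactifications, and $X_{KT}$ manifestly admits one, namely $(M_{KT}, D_{KT})$; indeed the paper closes by remarking that this example is precisely one whose non-affineness is \emph{invisible} to growth-rate techniques. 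The fundamental group alternative also does not go through as sketched: a ruled surface over $T^2$ has $\pi_1 = \ZZ^2$, and $\pi_1(X_{KT})$ (an extension of free groups, with $H_1$ of rank at least $2$) surjects onto $\ZZ^2$, so van Kampen applied to $\widetilde{M} = X_{KT} \cup N_{\widetilde{D}}$ produces no a priori incompatibility with a genus-one ruled compactification. The paper's actual argument is homological and is the idea missing from your proposal: Lemma~\ref{lem:positive} exhibits an explicit class $A = A_L + A_R \in H_2(X_{KT})$ with $A \cdot A > 0$, built from tori disjoint from $D$, while Lemma~\ref{lem:KT_3} shows that if a compactifying divisor contained a $0$-sphere, then by McDuff \cite[Corollary~1.3(ii)]{mcduff:rational} the sphere would be homologous to a fiber $F$ of a blown-up $S^2$-bundle and $X_{KT}$ would embed in $\widetilde{M} - F$, whose intersection form is negative semi-definite---contradicting the positive-square class.

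A secondary weakness: your exclusion of virtually essential Klein bottles rests on the assertion that ``no primitive element is conjugate to its inverse'' because $Y$ is assembled from orientable circle bundles over orientable surfaces glued by orientation-preserving maps. That is not a proof---orientable graph manifolds can and do contain embedded Klein bottles (e.g.\ certain orientable Seifert-fibered spaces), so orientability of the pieces alone does not deliver the group-theoretic claim. The paper instead runs an essential-surface argument: cutting a putative incompressible Klein bottle along the plumbing torus yields pieces that must be horizontal or vertical in the Seifert pieces; horizontal pieces are branched covers of a punctured torus and hence orientable (Riemann--Hurwitz), so all pieces are vertical annuli, and since the plumbing gluing exchanges fiber and section directions, adjacent vertical pieces cannot match up. If you want to keep your algebraic route, you would need to actually prove the no-element-conjugate-to-its-inverse statement for the amalgam $(F_2 \times \ZZ) *_{\ZZ^2} (F_2 \times \ZZ)$ with the fiber-swapping edge identification, which is real work; the surface-theoretic argument is shorter.
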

Recall that a divisor $\widetilde{D}$ is unobstructed if $\Gamma_{\widetilde{D}}^{\xi} \approx \Gamma_{\widetilde{D}}^{Top}$ and ``prime" simply means that each divisor boundary is a prime 3-manifold. The fact that all compactifying divisors for $X$ are unobstructed allows us to apply \Cref{thm:unique_nbhd}. From there, we verify:
\begin{prop}\label{prop:extends}
If $\widetilde{D}$ is a compactifying divisor for $X_{KT}$, then Every diffeomorphism $\Psi \colon \partial N_{\widetilde{D}} \to \partial N_{\widetilde{D}}$ extends to a diffeomorphism $\widetilde{\Psi} \colon N_{\widetilde{D}} \to N_{\widetilde{D}}$.
\end{prop}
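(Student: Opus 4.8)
The plan is to reduce the statement to an explicit computation of the mapping class group of a single model three-manifold and then to check that each generator of that group extends over the corresponding model neighborhood. First I would invoke \Cref{prop:unobstructed}: since every compactifying divisor $\widetilde{D}$ is unobstructed and its boundary is contactomorphic to the fixed ideal boundary $\partial_\infty X_{KT}$, \Cref{thm:unique_nbhd} shows that $N_{\widetilde{D}}$ is diffeomorphic, after a finite sequence of blow-ups and blow-downs, to the model neighborhood $N_D$ obtained by plumbing two copies of the trivial disk bundle $T^2 \times D^2$ (recall that $D_L$ and $D_R$ each have self-intersection zero). Because blowing up at an interior point does not alter the boundary, and because any boundary diffeomorphism that extends over $N_D$ can be extended over a blow-up by acting as the identity near the exceptional spheres, it suffices to prove the extension property for the model pair $(N_D, Y_D)$. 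By the isotopy extension theorem it is moreover enough to prove that the restriction map $\pi_0(\mathrm{Diff}(N_D)) \to \pi_0(\mathrm{Diff}(Y_D))$ is surjective.

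The key preliminary observation is a concrete description of $Y_D = \partial N_D$. Deleting the plumbing region from each $T^2 \times D^2$ exhibits $Y_D$ as the union of two Seifert pieces, each diffeomorphic to $\Sigma_{1,1} \times S^1$ (a trivial circle bundle over the once-punctured torus), glued along a single torus $T$; the gluing interchanges the circle fibre of one piece with the boundary curve $\partial \Sigma_{1,1}$ of the base of the other. In particular $T$ is an essential JSJ torus, so $Y_D$ is an irreducible graph manifold, consistent with the primeness furnished by \Cref{prop:unobstructed}. I would then appeal to the Waldhausen--Johannson theory for Haken manifolds to generate $\pi_0(\mathrm{Diff}(Y_D))$ by three families: (a) fibre-preserving self-diffeomorphisms of the two Seifert pieces, (b) Dehn twists along the JSJ torus $T$, and (c) the involution exchanging the two pieces.

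Finally I would check that each family extends over $N_D$. For (a), a self-diffeomorphism of a base torus $D_L$ or $D_R$ fixing the plumbing disk extends over $T^2 \times D^2$ as a product with the identity on the $D^2$-factor, while a vertical twist $(\sigma,\theta) \mapsto (\sigma, \theta + f(\sigma))$ extends by rotating each disk fibre, using that the restriction $H^1(T^2) \to H^1(\Sigma_{1,1})$ is an isomorphism so that $f$ prolongs over the filled-in disk (and can be taken to vanish near the plumbing region). For (c), the model plumbing is symmetric under interchanging the two factors, so the involution extends tautologically. The main obstacle, and the crux of the argument, is (b): one must verify that the two generating slopes on $T$ each bound a properly embedded disk in $N_D$, namely the circle fibre of the bundle over $D_L$ bounds a normal disk fibre of that bundle, and the complementary slope, which under the swap is the circle fibre of the bundle over $D_R$, bounds a normal disk fibre over $D_R$. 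Since both slopes bound disks, the corresponding Dehn twists extend as disk rotations and hence every twist along $T$ extends. Assembling (a), (b), and (c) gives the desired surjectivity. The most delicate point to get right is the precise generating set produced by Waldhausen--Johannson together with the control of isotopies near $T$, so that the extensions can be carried out simultaneously and compatibly with the gluing that defines $Y_D$.
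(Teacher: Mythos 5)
Your proposal is correct and follows essentially the same route as the paper's proof: identify the single plumbing torus $T$ as the JSJ torus splitting $Y_D$ into two copies of $\Sigma_{1,1} \times S^1$, invoke Waldhausen--Johannson theory to reduce any mapping class to one adapted to this decomposition, and extend piecewise over the two trivial disk bundles. If anything, you are more thorough than the paper, which passes directly from JSJ uniqueness to a pair of base diffeomorphisms $f_L, f_R$ of the punctured torus: you explicitly handle the Dehn twists along $T$ (correctly extended via the corner model $D^2 \times D^2$, since both generating slopes bound disk fibers), the vertical twists, and the swap involution, all of which the paper's argument leaves implicit.
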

By our previous discussion, may conclude: 
\begin{thm}\label{thm:unique}
    Let $(\widetilde{M},\widetilde{\omega})$ be a concave divisor compactification of $X_{KT}$. Then, up to blow-ups, $M$ is diffeomorphic to $M_{KT}$. 
\end{thm}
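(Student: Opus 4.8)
The plan is to realize both $M_{KT}$ and the arbitrary compactification $\widetilde{M}$ as cappings of the common Liouville domain $\overline{X}_{KT}$ by concave divisor neighborhoods, and then to match those neighborhoods and their gluings. First I would record that the standard divisor $D = D_L \cup D_R$ and the given divisor $\widetilde{D}$ are both concave compactifying $SNC^+$ divisors for $X_{KT}$, so by \Cref{prop:unobstructed} each is unobstructed and prime. By the very definition of a concave compactification, each divisor boundary is glued to $\partial \overline{X}_{KT}$ by a contactomorphism; hence both $(\partial N_D, \xi_D)$ and $(\partial N_{\widetilde{D}}, \xi_{\widetilde{D}})$ are contactomorphic to the ideal contact boundary $(\partial_\infty X_{KT}, \xi)$, and therefore to one another.

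With these two inputs in hand, I would invoke \Cref{thm:unique_nbhd}: since $D$ and $\widetilde{D}$ are both unobstructed and their divisor boundaries are contactomorphic, the neighborhoods $(N_D, \omega)$ and $(N_{\widetilde{D}}, \widetilde{\omega})$ are diffeomorphic up to blow-ups and blow-downs. This settles the ``neighborhood half'' of each decomposition $M = \overline{X}_{KT} \cup_{\Phi} N_D$.

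It remains to control the gluing. A priori the two compactifications are assembled with different capping contactomorphisms $\Phi, \widetilde{\Phi}$, and \Cref{thm:main} identifies compactifications only when they share the same mapping class $[\Psi] \in \pi_0(\mathrm{Cont}(\partial_\infty X_{KT}))$. To eliminate this dependence I would appeal to \Cref{prop:extends}: every self-diffeomorphism of $\partial N_{\widetilde{D}}$ extends over $N_{\widetilde{D}}$, so any change of mapping class in the gluing can be absorbed by a self-diffeomorphism of the neighborhood. Consequently the diffeomorphism type of the capping is independent of $[\widetilde{\Phi}]$, and combining this with the previous step shows that $\widetilde{M}$ is diffeomorphic to $M_{KT}$ up to blow-ups and blow-downs, which is the asserted conclusion.

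The main obstacle is arranging that these three ingredients interlock cleanly across the gluing interface. Concretely, the blow-ups and blow-downs produced by \Cref{thm:unique_nbhd} must be localizable in the interior of the divisor neighborhood, away from a collar of $\partial N_{\widetilde{D}}$, so that the boundary contactomorphism used in the capping is left untouched and the extension furnished by \Cref{prop:extends} remains valid after the modification. Verifying that one may push every blow-up into the interior, and that the resulting self-diffeomorphism of the (blown-up) neighborhood still restricts to the prescribed boundary map, is the delicate bookkeeping on which the argument turns; once that is in place, gluing the common $\overline{X}_{KT}$ back in along either map yields the desired diffeomorphism.
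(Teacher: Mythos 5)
Your proposal is correct and follows essentially the same route as the paper: unobstructedness via \Cref{prop:unobstructed}, contactomorphic divisor boundaries, the uniqueness of neighborhoods up to blow-ups (which the paper unpacks through Neumann's theorem and the modified chain reduction lemma rather than citing \Cref{thm:unique_nbhd} verbatim), and \Cref{prop:extends} to kill the dependence on the gluing mapping class. The only cosmetic difference is ordering --- the paper first blows up $\widetilde{M}$ so that its divisor becomes $D$ itself and then needs the extension property only for $Y_D$ (which is what its proof of \Cref{prop:extends} actually establishes), whereas you invoke the extension property for $N_{\widetilde{D}}$ directly; your flagged bookkeeping point about localizing blow-ups in the interior is the same implicit step the paper relies on.
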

From these results, we may prove our main theorem:
\begin{proof}[\textbf{Proof of Theorem ~\ref{thm:main_example}}]
    Suppose, on the contrary, that $X$ is affine. Then it follows that it admits an algebraic $SNC^+$ compactification $(\widetilde{M},\widetilde{D})$. By \Cref{prop:unobstructed} we see that the divisors $D$ and $\widetilde{D}$ are  both unobstructed. We may thus apply \Cref{thm:unique_nbhd} and see that we may perform a sequence of blow-ups on $\widetilde{M}$ until our divisor $\widetilde{D}$ becomes diffeomorphic to $D$. This process results in a 4-manifold $\widehat{M}$. It then follows from \Cref{prop:extends} that $\widehat{M}$ is diffeomorphic to $M_{KT}$. This cannot possibly occur since $b_1(M_{KT}) = 3$ and blowing up does not alter the first Betti number.
\end{proof}
We will need a few different lemmas in order to establish \Cref{thm:unique}:
\begin{enumerate}[label=(\roman*)]
\item The divisor boundary $Y_D$ is a prime 3-manifold
\item $Y_D$ does not contain any embedded Klein bottles
\item $X$ does not admit a compactification $(M,D)$ such that $D$ contains a sphere with self intersection number 0. 
\end{enumerate}
Together, items (i) and (ii) tell us that no divisor with divisor boundary diffeomorphic to $Y_D$ can contain Klein bottle pieces. Item (iii) allow us to avoid issues involving $0$-curves as they cannot be a component in any divisor compactification of $X$. In particular, this implies that no 0-curves appear in the contact chain reduction of any concave divisor graph with divisor boundary diffeomorphic to $Y_D$. From all of this, it will follow that the contact chain reduction of any such divisor graph must be isomorphic to its topological reduction which is the key to concluding our result. We will prove the lemmas listed above in sequence.

\subsection{Prime divisor boundary}
\begin{lem}\label{lem:prime}
The manifold $Y_D$ is prime.
\end{lem}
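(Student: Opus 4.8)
The plan is to identify $Y_D$ explicitly as a graph manifold and then prove it is irreducible, which immediately yields primeness. Since both components $D_L, D_R$ are tori ($g=1$) with self-intersection $0$, the plumbing $N_D$ is the plumbing of two trivial disk bundles over $T^2$ along a single transverse intersection. Cutting $Y_D = \partial N_D$ along the plumbing torus $T$ associated to the unique edge of $\Gamma_D$ decomposes it into two pieces, each of which is the circle bundle of Euler number $0$ over a once-punctured torus $\Sigma_0 := T^2 \setminus \mathring{D}^2$. Because the Euler number is $0$ over a surface with boundary, each such bundle is trivial, so each piece is diffeomorphic to $\Sigma_0 \times S^1$, with boundary $\partial\Sigma_0 \times S^1 \cong T^2 = T$. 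Thus
\[ Y_D \;\cong\; (\Sigma_0 \times S^1) \cup_{\phi} (\Sigma_0 \times S^1), \]
where $\phi$ is the plumbing identification interchanging the fiber $S^1$-direction of one piece with the base boundary direction $\partial\Sigma_0$ of the other.

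First I would record two structural facts about each piece $\Sigma_0 \times S^1$. Being a product of aspherical spaces it is itself aspherical, hence a $K(\pi,1)$, and every aspherical $3$-manifold is irreducible; so each piece is irreducible. Next, the boundary torus $T$ is incompressible in $\Sigma_0 \times S^1$: we have $\pi_1(\Sigma_0)\cong F_2 = \langle a,b\rangle$ with $\partial\Sigma_0$ represented by the commutator $[a,b]$, a nontrivial element that is not a proper power, so $\pi_1(\partial\Sigma_0)\hookrightarrow \pi_1(\Sigma_0)$; taking the product with $S^1$ gives $\pi_1(T)\cong \ZZ^2 \hookrightarrow \pi_1(\Sigma_0)\times\ZZ \cong F_2\times\ZZ$, which is exactly incompressibility.

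With these in hand, I would invoke the standard innermost-disk argument for gluing irreducible pieces along incompressible tori. Given any embedded $2$-sphere $S\subset Y_D$, isotope it to meet $T$ transversally in the minimal number of circles. If $S\cap T=\emptyset$ then $S$ lies in one (irreducible) piece and bounds a ball. Otherwise an innermost circle of $S\cap T$ bounds a disk $\Delta\subset S$ lying entirely in one piece, say $M_1$; incompressibility of $T$ in $M_1$ forces $\partial\Delta$ to bound a disk $\Delta'\subset T$, and irreducibility of $M_1$ lets us isotope $\Delta$ across $\Delta'$ to reduce $|S\cap T|$, contradicting minimality. Hence $S\cap T=\emptyset$ and $S$ bounds a ball, so $Y_D$ is irreducible and therefore prime. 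The main obstacle is the bookkeeping of the gluing: correctly seeing that the two pieces are the trivial products $\Sigma_0\times S^1$ and that $\partial\Sigma_0$ is $\pi_1$-injective; once incompressibility of $T$ is secured, irreducibility is a formal consequence, and in particular $Y_D\neq S^1\times S^2$ (the only prime, non-irreducible, closed orientable $3$-manifold) since $\pi_1(Y_D)$ contains the nonabelian subgroup $F_2\times\ZZ$.
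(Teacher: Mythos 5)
Your proof is correct and follows essentially the same route as the paper's: cut along the plumbing torus $T$, use incompressibility of $T$ together with an innermost-disk argument to isotope any embedded sphere off $T$, and conclude from the triviality of $\pi_2$ (irreducibility) of the two circle-bundle pieces $\Sigma_0 \times S^1$. If anything your write-up is tidier, since the paper's phrase ``Since $Y_D$ is prime, these two disks bound a 3-ball'' is a circular-sounding slip whose intended content is exactly your appeal to irreducibility of the piece containing the innermost disk.
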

\begin{proof}
Suppose not and suppose we may compose $Y_D$ as a non-trivial connected sum. Let $S \subset Y_D$ denote the embedded $S^2$ defining the connected sum decomposition. Recall that $Y_D$ is the result of a gluing of two $S^1$-bundles over punctured tori. We will let $Y_L$ and $Y_R$ denote these bundles. Let $T \subset Y_D$ be the plumbing torus, i.e. the image of the boundary tori of $Y_L$ and $Y_R$ under this gluing.

We wish to understand how $S$ intersects $T$. After perturbing via an isotopy, we may assume that $S$ and $T$ intersect transversely. By compactness, we know that $S$ and $T$ intersect along a collection of closed curves. Removing $T$ splits $S$ into a collection of surfaces each having some number of boundary components, each of which is a curve in the collection. Since $S$ is a sphere, one of these pieces must be a disk. 

The plumbing torus $T$ is an incompressible surface and so the boundary of this disk must also bound an embedded disk in $T$. This implies that each of the curves in our collection are contractible in $T$. Since $Y_D$ is prime, these two disks bound a 3-ball and so we may use this 3-ball to push the disk across $T$ and isotope it until it lies on only one side of the decomposition. We may repeat this argument for the remaining collection and eliminate every intersection, pushing everything to the same side of $T$. It follows that we may assume that $S$ lies on one side of the plumbing torus or the other. This would allow us to decompose one of the sides as a non-trivial connected sum. Since both $\pi_2(Y_L) \approx \pi_2(Y_R) \approx 0$, we know this is impossible.
\end{proof}

\subsection{No Klein bottles}
We will now prove that $Y_D$ is free of Klein bottles.
\begin{lem}\label{lem:KT_1}
    The manifold $Y_D$ contains no embedded Klein bottles. 
\end{lem}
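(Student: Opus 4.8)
The plan is to combine the decomposition $Y_D = Y_L \cup_T Y_R$ from the proof of \Cref{lem:prime} with the algebraic rigidity of the two pieces. Since both components $D_L, D_R$ have self-intersection $0$, the associated circle bundles are trivial, so each piece is a product $Y_L \cong Y_R \cong \Sigma_{1,1}\times S^1$, where $\Sigma_{1,1}$ is the once-punctured torus; these are glued along the plumbing torus $T$. As established in \Cref{lem:prime}, $T$ is incompressible, and since $Y_D$ is assembled from the aspherical pieces $Y_L, Y_R$ along the $\pi_1$-injective torus $T$, it is itself aspherical; in particular $\pi_2(Y_D) = 0$, and being prime and distinct from $S^1\times S^2$, the manifold $Y_D$ is irreducible.

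Suppose toward a contradiction that $K \subset Y_D$ is an embedded Klein bottle. Because $Y_D$ is orientable, $K$ is one-sided, so its regular neighborhood $N(K)$ is the twisted $I$-bundle over $K$ and $\widetilde{K} := \partial N(K)$ is a torus (the orientation double cover of $K$). First I would dispose of the compressible case: if $\widetilde{K}$ is compressible in $Y_D$, then in an irreducible manifold it either lies in a ball or bounds a solid torus on the side away from $K$. The former forces a Klein bottle into $\RR^3$, which is impossible. The latter would present $Y_D$ as the twisted $I$-bundle over $K$ capped off by a solid torus, i.e.\ a small Seifert fibered space, which is incompatible with the structure of $Y_D$ as two hyperbolic-base Seifert pieces glued along $T$. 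Hence we may assume $\widetilde{K}$ is incompressible.

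With $\widetilde{K}$ an incompressible torus, the key step is to isotope $K$ entirely off $T$. Here I would put $\widetilde{K}$ and $T$ in minimal transverse position and remove all circles of $\widetilde{K}\cap T$ that are inessential on either torus by the same innermost-disk surgery used in \Cref{lem:prime} (legitimate since both tori are incompressible and $Y_D$ is irreducible). The remaining circles are essential and parallel, cutting $\widetilde{K}$ into essential annuli inside the product pieces; since the plumbing gluing interchanges the $S^1$-fiber direction of one side with the base-boundary direction of the other, the Seifert fibrations do not match across $T$, and a vertical annulus on one side cannot continue to one on the other — so these intersections can be removed as well. This carries $K$ into a single piece, say $Y_L \cong \Sigma_{1,1}\times S^1$. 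It then suffices to show this product contains no incompressible Klein bottle: an incompressible $K$ would inject $\pi_1(K) = \langle a,b \mid bab^{-1} = a^{-1}\rangle$ into $\pi_1(\Sigma_{1,1}\times S^1) \cong F_2\times\ZZ$. But no element of $F_2\times\ZZ$ conjugates a nontrivial element to its inverse: if $(v,m)(w,n)(v,m)^{-1} = (w^{-1},-n)$ then $n=0$ and $vwv^{-1}=w^{-1}$ in $F_2$, forcing $w=1$ since no nontrivial element of a free group is conjugate to its inverse. Thus the relation $bab^{-1}=a^{-1}$ forces the image of $a$ to be trivial, so the map cannot be injective — the desired contradiction.

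The main obstacle is the isotopy step placing $\widetilde{K}$ into a single piece, which is exactly where the specific geometry of the plumbing is used: one must confirm that $T$ is a genuine JSJ torus, equivalently that an incompressible $\widetilde{K}$ cannot be forced to cross $T$ essentially. This rests on the fiber-versus-base swap in the gluing map and the fact that both bases $\Sigma_{1,1}$ have negative Euler characteristic. The remaining ingredients — the compressible-case reduction via irreducibility and the $\pi_1$ obstruction in the product piece — are routine by comparison.
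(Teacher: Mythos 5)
Your proposal is correct in substance and shares the paper's skeleton---primeness, the compressible/incompressible dichotomy, cutting along the plumbing torus $T$, and the fiber/section mismatch across $T$---but both of your endgames differ from the paper's. In the compressible case the paper avoids your JSJ-style structural appeal entirely: it invokes \Cref{prop:compressible_klein1} to endow $Y_D$ with a Seifert fibration over $\RP^2$ with at most one singular fiber, then computes $H_1(Y_D) = \ZZ^4$ from the plumbing presentation of $\pi_1$, which no such Seifert group can abelianize to. That homology computation also patches the two soft spots in your version: in your ``contained in a ball'' branch you tacitly assume the ball contains the $N(K)$ side (if instead the exterior side lies in the ball, a Mayer--Vietoris estimate gives $\mathrm{rank}\, H_1(Y_D) \le 2 < 4$, again a contradiction), and your ``incompatible with two hyperbolic-base pieces'' claim quietly needs the separate fact that $Y_D$ is not Seifert fibered, which the paper only establishes later (via Neumann's Corollary 5.7, in the proof of \Cref{prop:extends}). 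In the incompressible case, the paper cuts $K$ itself along $T$, lists the possible pieces (M\"obius bands, annuli, punctured Klein bottles, Klein bottles), kills horizontal pieces by orientability and Riemann--Hurwitz, and gets the contradiction directly from the impossibility of matching vertical annuli across the fiber/section swap; you instead pass to the double-cover torus $\widetilde{K} = \partial N(K)$, isotope it off $T$ by the same vertical-annulus/slope-mismatch mechanism, and finish with the clean algebraic fact that the Klein bottle group does not embed in $F_2 \times \ZZ$ (your verification is right: $n = 0$ forces $vwv^{-1} = w^{-1}$ in $F_2$, impossible for $w \neq 1$ since centralizers in free groups are cyclic, and injectivity on the index-two subgroup upgrades to injectivity on $\pi_1(K)$ because the Klein bottle group is torsion-free). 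Your route buys a portable algebraic punchline, at the cost of two bookkeeping steps you should make explicit: horizontal annuli must be excluded before asserting verticality (the same Euler-characteristic argument the paper uses), and disjointness of $\widetilde{K}$ from $T$ yields disjointness of $K$ from $T$ only after ruling out $T \subset N(K)$, which holds because essential tori in the twisted $I$-bundle over a Klein bottle are boundary-parallel and neither $Y_L$ nor $Y_R$ has Klein bottle fundamental group.
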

\begin{proof}
    We start by showing that $Y_D$ cannot contain compressible Klein bottles. By \Cref{lem:prime} and \Cref{prop:compressible_klein1}, if $Y_D$ did contain a virtually compressible Klein bottle, then it may be given the structure of a Seifert fibration with at most one singular fiber. If this were the case, its fundamental group would admit the following presentation:
\[\pi_1(Y_D) = \bracket{\mu, \lambda : \mu\lambda\mu^{-1} = \lambda^{-1}, \mu^{2a} = \lambda^{-b}}\]
for some $a,b \in \ZZ$.

    We understand the topology of $Y_D$ as the result of plumbing surgery on a pair of trivial $S^1$-bundles over $T^2$. From this description, we may give a presentation of the fundamental group (c.f. \cite[Chapter~5.3]{Orlik:seifert}):
    \[\pi_1(Y_D) = \bracket{u,v,x,y, F_L, F_R : F_L = [x,y], F_R = [u,v]}.\]
    By abelianizing, we see that
    \[H_1(Y_D) = \pi_1(Y_D)_{Ab} = \ZZ^4\] 
    so the abelianization of the second presentation is obviously not equal to the abelianization of the first. Thus we arrive at a contradiction.

    We then turn to the case of embedded incompressible Klein bottles. Topologically, $Y_D$ is the boundary of a plumbing of two trivial $\DD^2$-bundles over $T^2$. We let $T \subset Y_D$ be a fixed representative of the sole plumbing torus. Suppose that $K \subset Y_D$ is an embedded virtually incompressible Klein bottle.

    Cutting $Y_D$ along $T$ splits the manifold into two pieces $Y_L$ and $Y_R$. Both of these manifolds are diffeomorphic to $S^1$-bundles over a torus minus an open disk. This cut splits $K$ into a collection $\{\Sigma_{i}\}_{i \in \mathcal{I}}$ of surfaces with boundary. A fixed boundary component of any one of these surfaces is some closed curve in $T$. Since $K$ is an essential surface, each $\Sigma_{i}$ is an incompressible and boundary incompressible surface in either $Y_L$ or $Y_R$. These surfaces must be either horizontal or vertical with respect to the Seifert fibrations (c.f. \cite{Hatcher:3manifolds}, Proposition 1.12). Recall a surface $\Sigma$ in a Seifert-fibered space $Y$ with Seifert-fibration $\pi:Y\to S$ is \emph{vertical} if $\Sigma$ is a union of fibers of $\pi$ and \emph{horizontal} if it is transverse to the fibers of $\pi$. If $\Sigma$ is horizontal, the restriction of $\pi$ gives $\Sigma$ the structure of a branched cover over $S$.

Since the plumbing torus $T$ is incompressible, we can assume that none of the $\Sigma_i$ are disks. Thus since $K$ is broken into the pieces $\Sigma_i$ by $T$, we know that each $\Sigma_i$ must be either: 
\begin{itemize}
        \item a mobius band,
        \item a punctured Klein bottle,
        \item an annulus, or
        \item a Klein bottle.
    \end{itemize}
If $\Sigma_i$ is a horizontal surface, then it must be a branched cover over a punctured $T^2$ and hence it must be oriented and have negative Euler characteristic (by the Riemann-Hurwitz formula, \cite[Theorem~2.5.2]{Jost:riemann}). Thus none of the $\Sigma_i$ are horizontal.

A vertical surface must be either an annulus or a torus so $K$ is a union of vertical annuli in $Y_L$ and $Y_R$. In order for these pieces to glue to form a Klein bottle, we must always have a pair of pieces $\Sigma_L \subset Y_L$ and $\Sigma_R \subset Y_R$ which are glued together in the union. Since the gluing between $Y_L$ and $Y_R$ sends the fiber of one side to a section on the other side, we cannot have that both of these surfaces are vertical, a contradiction. 
\end{proof}

\subsection{No embeddings into ruled surfaces}
In order to establish (iii) we need the following.
\begin{lem}\label{lem:positive}
The cohomology group $H_2(X)$ contains a class with positive self intersection.
\end{lem}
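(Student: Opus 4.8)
The plan is to exhibit two embedded tori inside $X$ whose homology classes span a hyperbolic summand of the intersection form; since every hyperbolic plane contains vectors of positive self-intersection, this immediately produces the desired class. All intersection numbers will be computed by representatives and generic perturbations supported inside the open manifold $X$, so the resulting self-intersection is intrinsic to $X$.

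First I would fix coordinates compatible with the construction of $D$. Recall $M_{KT} = \MMM_\phi \times S^1$ with coordinates $(\theta_1,\theta_2,\theta_3,\theta_4)$, where $(\theta_1,\theta_2)$ parametrize the $T^2$-fiber of the mapping torus, $\theta_3$ is the base direction of the mapping torus, and $\theta_4$ is the product $S^1$-factor. After an isotopy the right-handed Dehn twist may be taken to be $\phi(\theta_1,\theta_2) = (\theta_1+\theta_2,\theta_2)$, so that $\phi$ fixes the $\theta_1$-direction and its fixed-point set is the circle $\{\theta_2 = 0\}$. With these conventions $D_L = \{\theta_3 = c,\ \theta_4 = d\}$ is the fiber torus in the $(\theta_1,\theta_2)$-directions, while the section $\sigma$ is forced to land in the fixed-point circle, giving $D_R = \{\theta_1 = \sigma_1,\ \theta_2 = 0\}$, the torus in the $(\theta_3,\theta_4)$-directions.

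The two tori are $T_{13} := \{\theta_2 = \epsilon,\ \theta_4 = 0\}$ and $T_{24} := \{\theta_1 = \alpha,\ \theta_3 = \gamma\}$, for generic constants with $\epsilon \neq 0$, $d \neq 0$, $\alpha \neq \sigma_1$, $\gamma \neq c$. The key step is the Gram matrix of $\{[T_{13}],[T_{24}]\}$. The torus $T_{24}$ lies in a single fiber slice crossed with the $\theta_4$-circle; the slice $\{\theta_2 = \epsilon\}$ defining $T_{13}$ closes up because the Dehn-twist identification $\theta_1 \mapsto \theta_1 + \epsilon$ is an affine reparametrization of the $\theta_1$-circle, and sweeping $\epsilon \to 0$ gives an isotopy to the standard torus, so $[T_{13}]$ and $[T_{24}]$ are the two complementary generators of $H_2(M_{KT})$. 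The genericity of the constants makes both tori disjoint from $D = D_L \cup D_R$, so they lie in $X$, and forces $T_{13}\cap T_{24}$ to consist of the single transverse point $(\alpha,\epsilon,\gamma,0)$, which likewise lies in $X$; hence $[T_{13}]\cdot[T_{24}] = \pm 1$. For the self-intersections, the $\theta_4$-direction is a trivial summand of each normal bundle since it is the product factor, $T_{24}$ lives in one fiber slice so carries no monodromy, and for $T_{13}$ the monodromy acts on the normal direction by $d\phi(\partial_{\theta_2}) = \partial_{\theta_1} + \partial_{\theta_2} \equiv \partial_{\theta_2}$ modulo the tangent direction $\partial_{\theta_1}$, so the normal framing closes up and the Euler number is $0$. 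Thus $[T_{13}]^2 = [T_{24}]^2 = 0$ and the Gram matrix is the hyperbolic form $\left(\begin{smallmatrix} 0 & \pm 1 \\ \pm 1 & 0\end{smallmatrix}\right)$, whence $[T_{13}] + [T_{24}]$ (or $[T_{13}] - [T_{24}]$, according to the sign) has self-intersection $+2 > 0$.

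The main obstacle I anticipate is correctly bookkeeping the effect of the right-handed Dehn twist, which enters in two places. First, $D_R$ is \emph{forced} onto the monodromy-invariant circle $\{\theta_2 = 0\}$, so $T_{13}$ cannot be taken to be the literal invariant torus and must be displaced to $\{\theta_2 = \epsilon\}$ while remaining embedded and in the same homology class. Second, one must verify that the twist contributes nothing to the normal Euler number of $T_{13}$, i.e. that it acts purely tangentially; this is the point where the \emph{positivity} of the twist could in principle have interfered, and it is exactly what the computation $d\phi(\partial_{\theta_2}) \equiv \partial_{\theta_2}$ rules out. Once these two points are settled, embeddedness, disjointness from $D$, and transversality of the single intersection are all arranged by genericity of $\epsilon,\alpha,\gamma,d$, and the hyperbolic-plane conclusion follows at once.
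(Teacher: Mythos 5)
Your proof is correct and is essentially the paper's own argument: the paper's classes $A_L$ and $A_R$ are precisely your tori $T_{24}$ (a $\lambda$-curve in a fiber times the $S^1$-factor) and $T_{13}$ (the twist-invariant $\mu$-torus swept over the base circle), and the paper likewise concludes via $A = A_L + A_R$ with $A_L^2 = A_R^2 = 0$ and $A_L \cdot A_R = 1$, your hyperbolic-plane computation made explicit in coordinates. One harmless side remark is off: $[T_{13}]$ and $[T_{24}]$ do not generate $H_2(M_{KT})$, which has rank $4$ (since $b_1 = 3$ and $\chi = 0$), but nothing in your argument uses this.
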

\begin{proof}
We consider our chosen section $\sigma \colon S^1 \to \MMM_{\phi}$ of the mapping torus and we let $\pi \colon \MMM_\phi \to S^1$ be the natural projection. As above, we have
\[D_L := \pi^{-1}(pt) \times \{pt\}\]
and
\[D_R := \sigma(S^1) \times S^1.\]
We may trivialize the $T^2$-bundle $\pi \colon M_{KT} \to T^2$ near the fiber $D_L$ in order to obtain generators for nearby $T^2$-fibers. Given $q \in S^1$ near our chosen point $p := \pi(D_L)$, we let $D_q$ denote the fiber lying over it. We let $\mu_q,\lambda_q \in H_1(D_q)$ denote the generators obtained from the trivialization. We choose our trivialization such that $\mu_q$ is parallel to the curve along which we perform a Dehn twist in each mapping torus $\MMM_{\phi} \times \{pt\} \subset M_{KT}$.

We let $A_L \in H_2(M_{KT})$ denote the homology class represented by a product of $\lambda_q$ with the circle $\{pt\} \times S^1 \subset M_{KT}$. We let $A_R \in H_2(M_{KT})$ denote the homology class represented by a product of a parallel copy of $\sigma(S^1) \times \{pt\}$ (i.e. one disjoint from $D_R$) with $\mu$, the image of $\mu_{q} \times I$ in $M_{\phi} \subset M_{KT}$.  We let $A = A_L + A_R$. Evidently, we have $A\cdot A > 0$ which completes the proof.
\end{proof}
With this, we may prove:
\begin{lem}\label{lem:KT_3}
    $X$ cannot admit a divisor compactification $(M,D)$ such that $D$ contains a sphere of self intersection number zero.
\end{lem}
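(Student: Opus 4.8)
The plan is to argue by contradiction, combining McDuff's structure theorem with \Cref{lem:positive}. Suppose $(M,\omega)$ is a concave $SNC^+$ divisor compactification of $X$ in which some smooth component of $D$ is a symplectically embedded sphere $S$ with $S\cdot S = 0$. Since $S$ is symplectic, $\int_S \omega > 0$, so $[S]$ is a \emph{nonzero} class in $H_2(M;\RR)$, and by hypothesis it is isotropic, $[S]\cdot[S]=0$. Invoking \cite[Corollary~1.5(ii)]{mcduff:rational} exactly as in the proof of \Cref{thm:main_topological}, the presence of the symplectic $0$-sphere $S$ forces $(M,\omega)$ to be a blow-up of a ruled surface. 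Every such manifold has $b_2^+(M)=1$, so the intersection form on $H_2(M;\RR)$ is Lorentzian of signature $(1,\,b_2(M)-1)$.

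The first substantive step is to transport the positive class into $M$. By \Cref{lem:positive} there is a class $\alpha \in H_2(X)$ with $\alpha\cdot\alpha > 0$; moreover, inspecting its construction, $\alpha$ is represented by a closed oriented surface $\Sigma_\alpha$ lying in $X = M-D$, since the relevant section and fiber tori are taken as parallel copies disjoint from $D$. Under the inclusion $X \hookrightarrow M$ the class $\alpha$ pushes forward to a class in $H_2(M)$ whose self-intersection is the Euler number of the normal bundle of $\Sigma_\alpha$, hence is still $\alpha\cdot\alpha > 0$. Because $\Sigma_\alpha \subset X$ is disjoint from $D$, and in particular from $S \subset D$, the pushed-forward class also satisfies $\alpha\cdot[S]=0$.

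The contradiction then comes from elementary Lorentzian lattice geometry. In a form of signature $(1,n)$, the orthogonal complement of a nonzero isotropic vector $F$ is negative semidefinite, with radical exactly $\RR F$: choosing a hyperbolic pair $\{F,B\}$ together with a negative-definite complement, any $\beta$ with $\beta\cdot F = 0$ has vanishing $B$-component and therefore $\beta\cdot\beta \le 0$. Applying this with $F=[S]$ and $\beta = \alpha$ gives $\alpha\cdot\alpha \le 0$, contradicting $\alpha\cdot\alpha > 0$. Hence no such $S$ can exist, which is the assertion of the lemma.

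The main obstacle is the homological bookkeeping in the middle step: one must verify that the positive class produced in \Cref{lem:positive} is genuinely realized by a closed surface sitting inside the open piece $X$, so that both (a) its self-intersection is unchanged upon including $X$ into the \emph{new} compactification $M$ (a priori different from $M_{KT}$), and (b) it is literally disjoint from $S$, forcing $\alpha\cdot[S]=0$. Once this is pinned down, the remaining inputs---that a blow-up of a ruled surface has $b_2^+=1$ and hence a Lorentzian intersection form, and that a symplectic sphere has nonzero homology class---are standard, and the closing lattice inequality is immediate.
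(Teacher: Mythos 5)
Your proof is correct, and its skeleton matches the paper's: both argue by contradiction, invoke McDuff's rational/ruled structure theorem on the strength of the symplectic $0$-sphere, and derive the contradiction by transporting the positive class of \Cref{lem:positive} into the new compactification. Where you diverge is the finishing move. The paper uses the sharper output of McDuff (\cite[Corollary~1.3(ii)]{mcduff:rational}), namely that $S$ is homologous to a fiber $F$ of the ruling; it then passes to the complement $Z$ of a fiber, identifies $Z$ via a section with a blow-up of $\widetilde{\Sigma}^* \times S^2$, concludes that the intersection product on $H_2(Z)$ is negative semidefinite, and contradicts this with the pushforward of the positive class under $X \subset Z$. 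This route requires an explicit computation of the intersection forms and the somewhat delicate claim that $X$ embeds in the fiber complement (the paper forms $Z$ from the minimal model $\widetilde{M}$ rather than $M$ itself, eliding some blow-up bookkeeping). You instead stay entirely in $H_2(M;\RR)$: any blow-up of a rational or ruled surface has $b_2^+(M) = 1$; $[S]$ is a nonzero isotropic class (nonzero because $\int_S \omega > 0$); $\alpha \cdot [S] = 0$ because the cycles constructed in \Cref{lem:positive} lie in $X = M - D$; and the orthogonal complement of a nonzero isotropic vector in a form of signature $(1,n)$ is negative semidefinite with radical $\RR[S]$. Your version is more economical and robust --- it needs only the coarse conclusion of McDuff's theorem and bypasses both the ``parallel to a fiber'' identification and the embedding $X \subset Z$ --- at the cost of the geometric picture of the fiber complement, which is what the paper's argument makes explicit. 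The one point both arguments genuinely hinge on, that the positive class of \Cref{lem:positive} is carried by closed surfaces inside $X$ so that its self-intersection persists and its pairing with $[S]$ vanishes in an \emph{arbitrary} compactification, you correctly isolate and verify.
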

\begin{proof}
Assume by way of contradiction that $X$ admits such a compactification $(M,D)$. Let $S \subset D$ denote a sphere with $S\cdot S = 0$. By \cite[Corollary~1.3(ii)]{mcduff:rational}, the manifold $M$ is diffeomorphic to an $S^2$-bundle over a closed surface $\Sigma$ blown up some number of times and $S$ is homologous to a fiber of this bundle. It is straightforward to compute the intersection form of this manifold:

Let $\widetilde{M}$ be the minimal symplectic manifold obtained by blowing down all of the exceptional curves of $M$ (so that $\widetilde{M}$ is a genuine $S^2$-bundle over a surface). The second homology group $H_2(\widetilde{M};\ZZ)$ is generated by a generic fiber $F$ and a generic section $\widetilde{\Sigma} \subset \widetilde{M}$. If $\widetilde{M}$ is trivial, then the intersection form is given in the $\{[F], [\widetilde{\Sigma}]\}$-basis by
\[
    \begin{bmatrix}
        0 & 1\\
        1 & 0\\
    \end{bmatrix}
\]
and if $\widetilde{M}$ is non-trivial, the intersection form is given by
\[
    \begin{bmatrix}
        0 & 1\\ 
        1 & 1\\
    \end{bmatrix}.
\]
In order to obtain $M$ from $\widetilde{M}$, we must perform blow-ups at a collection of disjoint points $\{p_1,\ldots, p_m\} \subset \widetilde{M}$. We may assume these points are disjoint from $F$ and $\widetilde{\Sigma}$. After performing these blow-ups, the intersection form of $M$ is given by either
\[
    \begin{bmatrix}
        0 & 1\\
        1 & 0\\ 
    \end{bmatrix} \oplus -\mathbb{1}[m]
\]
or
\[
    \begin{bmatrix}
        0 & 1\\
        1 & 1\\
    \end{bmatrix} \oplus -\mathbb{1}[m]
\]
where $-\mathbb{1}[m]$ denotes an $m\times m$ matrix with $-1$'s along the diagonal and zeros off the diagonal.

We consider $Z = \widetilde{M} - F$. Since 
$F$ is parallel to the 0-framed sphere in $D$, 
we must have $X \subset Z$. We note that 
the intersection product
\[- \cdot - \colon H_2(Z) \otimes H_2(Z) \to \ZZ\]
is negative semi-definite. To see this, we simply observe that $Z$ is diffeomorphic (via our generic section $\widetilde{\Sigma}$) to a blow-up of $\widetilde{\Sigma}^* \times S^2$ where $\widetilde{\Sigma}^* = \widetilde{\Sigma} - \{pt\}$. Every class $\beta \in H_2(Z)$ has 
$\beta \cdot \beta \le 0$ so the induced intersection product $H_2(Z) \otimes H_2(Z) \to \ZZ$ is negative semi-definite. The pushforward of the class from \Cref{lem:positive} along the inclusion is a class with positive self intersection and so we have a contradiction.

\end{proof}

\subsection{Mapping class considerations and conclusion}
In order to apply the full strength of our main theorem as in the conclusion of the previous section, we prove:
\begin{proof}[\normalfont \textbf{Proposition~\ref{prop:extends}}]
\renewcommand{\qedsymbol}{}
\em
Every diffeomorphism of $Y_D$ extends to the neighborhood $N_D$ i.e. for any diffeomorphism $\psi \colon Y_D \to Y_D$, there exists a diffeomorphism of $\Psi \colon N_D \to N_D$ such that $\Psi|_{Y_D} = \psi$.
\end{proof}
\begin{proof}
    We note that $Y_D$ is not a Seifert fibered space. This follows, for example, from a corollary of Neumann's theorem (\cite{Neumann:calculus}, Corollary 5.7) which states that the normal form of a plumbing graph associated to a Seifert fibered space must be star shaped (i.e. a single genus $g$ component connected to a number of chains of spheres). $\Gamma_D$ is in normal form but it is certainly not star shaped.

    If $T \subset Y_D$ is a plumbing torus representative, then $Y_D - T$ is a pair of $S^1$-bundles over surfaces with boundary as above. It follows that each piece in this decomposition is Seifert-fibered. 

    It follows that the singleton set $\{T\}$ yields a JSJ-decomposition of $Y_D$ (c.f. \cite{Hatcher:3manifolds}, Chapter 2). By the uniqueness theorem for JSJ-decompositions, we may assume that $\psi$ is isotopic to a diffeomorphism which fixes a regular neighborhood of $T$. Thus $\psi$ is isotopic to a diffeomorphism of the two pieces $Y_L$ and $Y_R$ which is the identity near the boundary. This diffeomorphism respects the Seifert fibrations of the two pieces and so is induced by a pair of diffeomorphisms $f_L,f_R \colon T^2_* \to T^2_*$ of the $T^2_*$, the torus minus a disk. We may use these diffeomorphisms to define our diffeomorphism $\Psi \colon N_D \to N_D$ which completes the proof.
\end{proof}
Once we know that every diffeomorphism extends, we may reach our strongest possible conclusion about the collection of all compactifications.
\begin{thm}\label{thm:main_application}
    Let $(M,\widetilde{\omega})$ be a concave divisor compactification of $X_{KT}$. Then, up to blow-ups, $M$ is diffeomorphic to $M_{KT}$. 
\end{thm}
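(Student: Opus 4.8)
The plan is to assemble the rigidity results already in hand into a statement about the entire compactification rather than just its boundary. Let $(M,\widetilde\omega)$ be a concave divisor compactification of $X_{KT}$ with compactifying divisor $\widetilde D$, and write $M = \overline{X}\cup_\Phi N_{\widetilde D}$, where $\overline{X}$ is the Liouville domain underlying $X_{KT}$ and $\Phi$ is a contactomorphism of $\partial\overline{X}$ onto $\partial N_{\widetilde D}$ defining the gluing. First I would invoke \Cref{prop:unobstructed} to conclude that $\widetilde D$ is both unobstructed and prime. The canonical Kodaira-Thurston divisor $D = D_L\cup D_R$ is unobstructed as well: its divisor graph is the two-vertex graph with decorations $(1,0),(1,0)$, which already lies in both topological and contact normal form, so $\Gamma_D^\xi\approx\Gamma_D^{Top}$. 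Since both neighborhoods are glued to the same $\overline{X}$ along its contact boundary, the divisor boundaries $Y_{\widetilde D}$ and $Y_D$ are each contactomorphic to $\partial_\infty X_{KT}$, hence diffeomorphic to one another.

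With both $D$ and $\widetilde D$ unobstructed and their divisor boundaries diffeomorphic, I would apply \Cref{thm:unique_nbhd} to conclude that $(N_D,\omega)$ and $(N_{\widetilde D},\widetilde\omega)$ are diffeomorphic up to blow-ups and blow-downs. Tracking these operations on the ambient manifold, I would perform the corresponding sequence of blow-ups on $M$ to obtain a manifold $\widehat M$ whose divisor neighborhood is diffeomorphic to $N_D$. Because blowing up takes place in the interior of the neighborhood and leaves the complement unchanged, $\widehat M$ decomposes as $\overline{X}\cup_\Phi N_D$, while by construction $M_{KT} = \overline{X}\cup_{\Phi_0} N_D$ for the standard gluing $\Phi_0$. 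At this stage the two manifolds differ only in the diffeomorphism used to attach the common neighborhood $N_D$.

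The remaining step, and the one I expect to be the main obstacle, is to show that this difference in gluing maps is immaterial. This is precisely the role of \Cref{prop:extends}: the transition diffeomorphism $\Phi\circ\Phi_0^{-1}$ is a self-diffeomorphism of $Y_D=\partial N_D$, and \Cref{prop:extends} guarantees it extends to a diffeomorphism of $N_D$. Gluing this extension to the identity on $\overline{X}$ produces a diffeomorphism $\widehat M\cong M_{KT}$, whence $M$ is diffeomorphic to $M_{KT}$ up to blow-ups. The delicacy here lies entirely in the mapping-class ambiguity of the capping contactomorphism; absorbing it rests on the JSJ-rigidity of $Y_D$ used in the proof of \Cref{prop:extends}, namely that the single plumbing torus furnishes a JSJ-decomposition whose uniqueness forces any boundary diffeomorphism to respect the Seifert structures of the two bundle pieces. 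Once this extension is available, the assembly of \Cref{prop:unobstructed}, \Cref{thm:unique_nbhd}, and \Cref{prop:extends} yields the claim.
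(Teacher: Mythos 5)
Your proposal is correct and follows essentially the same route as the paper: unobstructedness of all compactifying divisors (\Cref{prop:unobstructed}), the uniqueness of unobstructed divisor neighborhoods up to blow-ups (\Cref{thm:unique_nbhd}, which the paper unpacks via Neumann's theorem and the modified chain reduction lemma, noting as you do that $\Gamma_D$ is already in normal form), and finally \Cref{prop:extends} to absorb the mapping-class ambiguity of the gluing. Your explicit factoring of the transition map as $\Phi\circ\Phi_0^{-1}$ is just a slightly more careful phrasing of the paper's observation that $B\ell(M)$ and $M_{KT}$ differ only by the isotopy class of a boundary diffeomorphism, of which there is effectively one.
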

\begin{proof}
    Given such a compactification $(M,\widetilde{\omega})$ by a divisor $\widetilde{D}$, we let $\Gamma_{\widetilde{D}}$ denote its associated divisor graph. Since $M_{KT}$ and $M$ are both divisor compactifications of the same manifold $X_{KT}$, it follows that $Y_D$ and $Y_{\widetilde{D}}$ are diffeomorphic. Since $X_{KT}$ is a Liouville manifold, it follows that $(Y_{D}, \xi_D)$ and $(Y_{\widetilde{D}}, \xi_{\widetilde{D}})$ are also contactomorphic to $\partial_{\infty} X_{KT}$, and thus to each other. 

    Since $Y_D$ is unobstructed, the contact reductions $\Gamma_D^{\xi}$ and $\Gamma_{\widetilde{D}}^{\xi}$ are both isomorphic to their respective topological reductions. By Neumann's theorem (and the modified chain reduction lemma), it follows that we may relate the divisor graph of $\Gamma_{D}$ to $\Gamma_{\widetilde{D}}$ via a sequence of blow-ups and blow-downs. From this, we may perform corresponding blow-ups on $(M,\widetilde{\omega})$ in order to transform the divisor $\widetilde{D}$ into $D$ (this follows because $\Gamma_D$ is already in topological normal form).
    This presents a blow-up $B\ell(M)$ as a compactification of $X_{KT}$ by the same divisor $D$ and so $M$ and $B\ell(M)$ differ only by the isotopy class of diffeomorphism of $Y_D$ defining these compactifications. By \Cref{prop:extends}, there is only one such isotopy class. It follows that $B\ell(M)$ and $M_{KT}$ are diffeomorphic
\end{proof}
Thus we may conclude:
\begin{proof}[\normalfont \textbf{Theorem~\ref{thm:main_example}}]
\renewcommand{\qedsymbol}{}
{\em The manifold $X = M_{KT} - D$ is not biholomorphic to any affine variety.}
\end{proof}

A similar argument should work for any Liouville manifold whose ideal contact boundary is prime and whose divisors are all unobstructed, so long as it admits a compactification $M$ 
with $b_1(M)$ odd. In light of \cite{McLean:affine} and \cite{Seidel:symplectic}, this is the first example of a non-affine 
symplectic manifold whose obstruction from being affine cannot be detected by the growth rate 
of symplectic homology. The existence of this example tells us that there should be a deeper obstruction to being affine that assumes the same basic topological setup as above and is not captured via growth rate techniques. The existence and nature of such an obstruction is still currently unknown.

\bibliography{arxiv}

\end{document}